\title[On the computation of the vanishing locus of a finitely presented functor]{On the computation of the vanishing locus of a finitely presented functor}
\author{Sebastian Posur}
\newtheoremstyle{mytheoremstyle} 
    {5pt}                    
    {5pt}                    
    {\itshape}                   
    {\parindent}                           
    {\bf}                   
    {.}                          
    {.5em}                       
    {}  
\theoremstyle{mytheoremstyle}
\newtheorem{theorem}{Theorem}[section]
\newtheorem{lemma}[theorem]{Lemma}
\newtheorem{corollary}[theorem]{Corollary}
\newtheoremstyle{mytdefintionstyle} 
    {5pt}                    
    {5pt}                    
    {\rm}                   
    {\parindent}                           
    {\bf}                   
    {.}                          
    {.5em}                       
    {}  
\theoremstyle{remark}
\newtheorem{remark}[theorem]{Remark}
\theoremstyle{mytdefintionstyle}
\newtheorem{definition}[theorem]{Definition}
\newtheorem{example}[theorem]{Example}
\newtheorem{convention}[theorem]{Convention}
\newtheorem{notation}[theorem]{Notation}
\newtheorem{para}[theorem]{}
\newtheoremstyle{exmp_contd} 
{\topsep} {\topsep}%
{\upshape}
{}
{\bfseries}
{}
{ }
{\thmname{#1}\,\thmnumber{ #2}\thmnote{#3}\enspace(continued)}
\theoremstyle{exmp_contd}
\DeclareMathOperator{\Hom}{\mathrm{Hom}}
\DeclareMathOperator{\Ext}{\mathrm{Ext}}
\DeclareMathOperator{\kernel}{\mathrm{ker}}
\DeclareMathOperator{\rank}{\mathrm{rank}}
\DeclareMathOperator{\cokernel}{\mathrm{cok}}
\newcommand{\op}{\mathrm{op}}
\newcommand{\Tr}{\mathrm{Tr}}
\newcommand{\Kb}{\mathrm{K^b}}
\newcommand{\Db}{\mathrm{D^b}}
\newcommand{\vecl}{\text{-}\mathbf{vec}}
\newcommand{\Frm}{\mathbf{Frm}}
\newcommand{\Top}{\mathbf{Top}}
\newcommand{\Modl}{\text{-}\mathbf{Mod}}
\newcommand{\Modr}{\mathbf{Mod}\text{-}}
\newcommand{\modl}{\text{-}\mathbf{mod}}
\newcommand{\modr}{\mathbf{mod}\text{-}}
\newcommand{\Ab}{\mathbf{Ab}}
\newcommand{\ab}[1]{\mathbf{ab}({#1})}
\newcommand{\DAGJ}{D_{AGJ}}
\newcommand{\unit}{\mathrm{unit}}
\newcommand{\Irr}{\mathrm{Irr}}
\newcommand{\lmc}{\mathrm{lmc}}
\newcommand{\rmc}{\mathrm{rmc}}
\newcommand{\length}{\mathrm{length}}
\newcommand{\Hilb}{\mathrm{Hilb}}
\newcommand{\HilbP}{\mathrm{HilbP}}
\newcommand{\defect}{\mathrm{Defect}}
\newcommand{\Zg}{\mathrm{Zg}}
\newcommand{\Serre}{\mathrm{Serre}}
\newcommand{\codefect}{\mathrm{Covdefect}}
\newcommand{\ev}[1]{\mathrm{Eval}_{#1}}
\newcommand{\asBeh}[2]{{#1}[{#2}]}
\newcommand{\ringmap}{\sigma}
\newcommand{\Z}{\mathbb{Z}}
\newcommand{\AC}{\mathbf{A}}
\newcommand{\BC}{\mathbf{B}}
\newcommand{\CC}{\mathbf{C}}
\newcommand{\TC}{\mathbf{T}}
\newcommand{\ff}{\mathcal{F}}
\newcommand{\M}{M}
\newcommand{\Spec}{\mathrm{Spec}}
\newcommand{\pmatrow}[2]{ \begin{pmatrix}{#1} & {#2} \end{pmatrix} }
\newcommand{\pmatcol}[2]{ \begin{pmatrix}{#1} \\ {#2} \end{pmatrix} }
\newcommand{\Supp}{\mathrm{Supp}}
\newcommand{\Ann}{\mathrm{Ann}}
\newcommand{\pr}{\mathrm{pr}}
\newcommand{\Proj}{\mathrm{Proj}}
\newcommand{\Inj}{\mathrm{Inj}}
\newcommand{\mult}{\mathrm{mult}}
\newcommand{\obj}{\mathrm{obj}}
\newcommand{\V}{\mathbb{V}}
\newcommand{\pid}{\mathfrak{p}}
\DeclareMathOperator{\mSpec}{mSpec}
\DeclareMathOperator{\Quot}{Quot}
\newcommand{\qid}{\mathfrak{q}}
\tikzset{round left paren/.style={ncbar=0.5cm,out=120,in=-120}}
\tikzset{round right paren/.style={ncbar=0.5cm,out=60,in=-60}}
\newcolumntype{C}[1]{>{\centering\arraybackslash$}p{#1}<{$}}
\newlength{\mycolwd}
\definecolor{lightgray}{gray}{0.8}
\newcolumntype{L}{>{\raggedleft}p{0.28\textwidth}}
\newcolumntype{R}{p{0.8\textwidth}}
\definecolor{ctcolor}{gray}{0.95}
\definecolor{ctucolor}{gray}{0.85}
\newcommand{\thickhline}{%
    \noalign {\ifnum 0=`}\fi \hrule height 1pt
    \futurelet \reserved@a \@xhline
}
\newcolumntype{"}{@{\hskip\tabcolsep\vrule width 1pt\hskip\tabcolsep}}
\newlist{theoremenumerate}{enumerate}{1}
\setlist[theoremenumerate]{label=(\arabic{theoremenumeratei}), ref=\thetheorem.(\arabic{theoremenumeratei}),noitemsep}
\subjclass[2020]{Primary 18E10; Secondary 93C05}
\begin{document}

\begin{abstract}
We discuss invariants which are helpful for the computation of the vanishing locus of a finitely presented functor $\mathcal{G}$, i.e., the set of points in the Ziegler spectrum on which $\mathcal{G}$ vanishes. These invariants are: the rank of $\mathcal{G}$, the supports of its co- and contravariant defect, and the class of $\mathcal{G}$ in the Grothendieck group of the category of finitely presented functors. We show that these invariants determine the vanishing locus in the case of a finitely presented functor over a Dedekind domain.
\end{abstract}

\keywords{
Finitely presented functors,
algebraic systems theory,
defect of a functor,
free abelian categories,
Serre quotients,
Ziegler spectrum,
Grothendieck group
}

\maketitle
\setcounter{tocdepth}{4}
\tableofcontents

\section{Introduction}
The \emph{functorial approach} is the idea that some mathematical concepts gain clarity by shifting our attention from the study of modules to the study of functors.
Auslander demonstrated the strength of such an approach in the realm of representation theory \cite{AuslanderFunctorial}.
This paper is motivated by the observation that a functorial approach is also beneficial for the behavioral approach to algebraic systems theory initiated by Willems \cite{WilParadigms}.

The category $R\modl\modl$ of finitely presented functors on finitely presented modules over a ring $R$ plays the following important role in the behavioral approach to algebraic systems theory: its Serre quotients can be regarded as ambient categories for behaviors, i.e., as a mathematical context for the main objects of interest in the behavioral approach to algebraic systems theory. This point of view is elaborated in \cite{PosBehI}. 

Every object $\mathcal{G}$ of the category $R\modl\modl$ is given as the cokernel of a natural transformation between hom functors, i.e., by an exact sequence of functors
\begin{center}
   \begin{tikzpicture}[mylabel/.style={fill=white},baseline=(A)]
        \coordinate (r) at (4.5,0);
        \node (A) {$\Hom( N', - )$};
        \node (B) at ($(A)+(r)$) {$\Hom( N, - )$};
        \node (C) at ($(B) + 0.5*(r)$) {$\mathcal{G}$};
        \node (D) at ($(C) + 0.25*(r)$) {$0$,};
        \draw[->,thick] (A) to node[above]{$\Hom( \alpha, - )$} (B);
        \draw[->,thick] (B) to (C);
        \draw[->,thick] (C) to (D);
  \end{tikzpicture}
\end{center}
where $\alpha: N \rightarrow N'$ is a morphism in the category $R\modl$ of finitely presented modules over $R$. Evaluation at an arbitrary $R$-module $M$ defines an exact functor
\[
\ev{M}: R\modl\modl \rightarrow \Ab
\]
whose target is the category of all abelian groups.
Now, the abelian ambient category for behaviors over $M$ is defined as the following Serre quotient:
\[
\ab{M} := \frac{R\modl\modl}{\kernel(\ev{\M})}.
\]
Notions like the controllability or the observability of a behavior can be studied within the context provided by the category $\ab{M}$, see \cite[Section 8]{PosBehI}.

The category $R\modl\modl$ is amenable to explicit computations whenever the
additive closure of $R$ has decidable homotopy equations, see \cite{PosFree}.
Since algebraic systems theory depends on effective computations, it is a natural goal to find situations in which we can also render Serre quotients of $R\modl\modl$ constructive. 
For a Serre subcategory $\CC \subseteq \AC$ of an abelian category, the membership problem is the decision problem of whether a given object $A \in \AC$ lies in $\CC$ or not.
If $\AC$ is a computable abelian category (in the sense of \cite{PosCCT}), then the Serre quotient $\AC/\CC$ is computable if the membership problem is decidable.
Thus, it is natural to ask when the membership problem for Serre subcategories of $R\modl\modl$ is decidable.

From a theoretical point of view, Serre subcategories of $R\modl\modl$ are classified by the closed sets of the (left or right) Ziegler spectrum $\Zg(R)$ of $R$. The points in $\Zg(R)$ are (isomorphism classes of) indecomposable pure-injective modules. The classification works as follows: given a closed set $X \subseteq \Zg(R)$, all finitely presented functors which vanish on $X$ give a Serre subcategory. Conversely, given a Serre subcategory $\CC \subseteq R\modl\modl$, its vanishing locus 
\[
\V( \CC ) := \{ P \in \Zg(R) \mid \text{$\mathcal{G}(P) \cong 0$ for all $\mathcal{G} \in \CC$} \}
\]
defines a closed subset of $\Zg(R)$. It follows that whenever we know the 
corresponding closed subset $X \subseteq \Zg(R)$ of a Serre subcategory, the membership problem means testing whether the vanishing locus $\V( \mathcal{G} )$ of
a given finitely presented functor $\mathcal{G}$ contains $X$.

This paper focuses on computational tools which are helpful for the determination of (parts of) the vanishing locus of a finitely presented functor.
In \Cref{section:fpfunctors}, we recall the theory of finitely presented functors needed for our purposes. 
In \Cref{section:serrequots_of_free_abelian_cats}, we introduce the category $\ab{M}$ and show how to identify this category in some special cases.
In \Cref{section:ziegler_spectrum}, we recall some theoretical aspects of the Ziegler spectrum, where we use the language of frames for our description.

In \Cref{section:vanishing_locus_kernel_defect}, we describe the following two parts of the vanishing locus of a finitely presented functor $\mathcal{G}$ over a commutative noetherian ring:
\[
\V( \mathcal{G} ) \cap \V( \kernel( \defect ) ) \text{\hspace{2em}and\hspace{2em}} \V( \mathcal{G} ) \cap \V( \kernel( \codefect ) ),
\]
where $\defect$ and $\codefect$ are canonical exact functors of types
\[
R\modl\modl \xrightarrow{\defect} (R\modl)^{\op}  \text{\hspace{2em}and\hspace{2em}} R\modl\modl \xrightarrow{\codefect} (\modr R),
\]
respectively.
In particular, we show that these parts can be identified with the complement of the supports of the modules $\defect( \mathcal{G} ) \in R\modl$ and $\codefect( \mathcal{G} ) \in \modr R$, respectively, and are thus amenable to tools of computer algebra, see \Cref{theorem:checking_kernels_of_defects} and \Cref{remark:checking_kernels_of_defects}.

In \Cref{section:grothendieck_group_of_free_abelian_cat}, we describe the Grothendieck group $K_0(R\modl\modl)$ for $R$ an arbitrary ring.
It directly follows from Grothendieck's resolution theorem and the description of projectives and injectives in $R\modl\modl$ that we have isomorphisms
\[
K_0( R\modl\modl ) \cong K_0( |R\modl| )  \text{\hspace{2em}and\hspace{2em}} K_0( R\modl\modl ) \cong K_0( |\modr R| ),
\]
where $K_0( |R\modl| )$ and $K_0( |\modr R| )$ are the Grothendieck groups of the categories of left/right finitely presented $R$-modules merely regarded as additive categories, see \Cref{theorem:K_0_fpfunctors}. In a short digression, we explain the resulting isomorphism $K_0( |R\modl| ) \cong K_0( |\modr R| )$ as the decategorification of a triangulated equivalence
\[
\Kb( R \modl )^{\op} \simeq \Kb( \modr R )
\]
that can be constructed using the Auslander transpose (\Cref{lemma:triangulated}).

In \Cref{section:additive_invariants_for_the_detection}, we ask which additive invariants of $R\modl\modl$ are helpful for the membership problem for Serre subcategories. We say that an additive invariant $\alpha$ detects being zero at an arbitrary $R$-module $M$ if
\[
\text{$\alpha( \mathcal{G} ) = 0$ if and only if $\mathcal{G}(M) \cong 0$}
\]
for all $\mathcal{G} \in R\modl\modl$.
We prove that there are such invariants (with values in $\Z$) for $M$ equal to a right artinian ring $S$ regarded as an $R$-module by restriction along a ring map $R \rightarrow S$, see \Cref{theorem:K_0_zero_detection}.
Later, we give an example of pure-injective modules $P$ and $A$ such that there are no additive invariants that detect being zero either at $P$ or $A$, but such that there is such an additive invariant for their direct sum $P \oplus A$, see \Cref{example:no_additive_invariant_that_detects_zero} and \Cref{corollary:AplusP_detection}.

In \Cref{section:the_rank}, we define and discuss the rank of a finitely presented functor over an integral domain. In \Cref{section:applications}, we apply our findings to the case of $R$ being a Dedekind domain. In this case, all points of the Ziegler spectrum are explicitly known. 
The Grothendieck group can be explicitly described in terms of the invariants provided by the structure theorem for finitely generated modules over Dedekind domains, see \Cref{corollary:K_0_Dedekind}.
We describe all categories of the form $\ab{M}$ for $M$ being a pure-injective module in \Cref{theorem:abM_for_pure_injectives_Dedekind}.
We show that the vanishing locus of an object $\mathcal{G}$ in $R\modl\modl$ can be computed as the following union (see \Cref{theorem:main_Dedekind}):
\[
\V( \mathcal{G} ) = \V_{\Hilb}( \mathcal{G} ) \cup \V_{\defect}( \mathcal{G} ) \cup \V_{\codefect}( \mathcal{G} ) \cup \V_{\rank}( \mathcal{G} )
\]
where the sets on the right hand side are computable by the following four invariants: the Hilbert function (introduced in \Cref{subsection:hilbert_function}), the contravariant defect, the covariant defect, and the rank of $\mathcal{G}$.

\section{Notation and conventions}

\begin{notation}\label{notation:cats}
Categories are printed in bold. For a category $\CC$, we denote its class of objects by $\obj( \CC )$.
If $\AC$ is an abelian category, then we denote
\begin{itemize}
    \item its underlying additive category by $|\AC|$,
    \item the full subcategory spanned by all projective objects by $\Proj( \AC )$, which we regard as an additive category,
    \item the full subcategory spanned by all injectives objects by $\Inj( \AC )$, which we regard as an additive category.
\end{itemize}
We denote by $\Ab$ the category of abelian groups.
For $R$ a ring, we make use of the following categories:
\begin{itemize}
    \item $R\Modl$: the category of all left $R$-modules.
    \item $\Modr R$: the category of all right $R$-modules.
    \item $R\modl$: the category of finitely presented left $R$-modules.
    \item $\modr R$: the category of finitely presented right $R$-modules.
\end{itemize}
If $Q$ is a field, we denote by $Q\vecl$ the category of finite dimensional $Q$-vector spaces.
Moreover, we will make use of the fact that right modules over $R$ can be regarded as left modules over $R^{\op}$ and vice versa.
\end{notation}

\begin{para}
A ring $R$ is \textbf{left coherent} if every finitely generated left submodule of ${_R}R$ is finitely presented.
This is equivalent to $R\modl$ being an abelian category.
Dually, $R$ is called \textbf{right coherent} if $R^{\op}$ is left coherent.
\end{para}

\begin{notation}
For a morphism of rings $R \xrightarrow{\ringmap} S$ and $M \in S\Modl$, we denote the restriction of scalars by $M_{|\ringmap} \in R\Modl$.
\end{notation}

\begin{notation}
For a module $M \in R\Modl$, we denote its injective hull by $E( M )$.
\end{notation}

\begin{convention}
All functors between additive categories are assumed to be additive.
\end{convention}

\begin{notation}
For a commutative ring $R$, we refer to its set of \emph{prime} ideals by $\Spec(R)$ and its set of \emph{maximal} ideals by $\mSpec(R)$.
If $\pid \subseteq R$ is a prime ideal, we denote the localization of $R$ at $\pid$ by $R_{\pid}$.
\end{notation}

\begin{para}
A \textbf{principal ideal domain} (PID) is a commutative ring $R$ with the following properties: it is a domain, i.e., it has no zero-divisors, its Krull dimension is $1$, and every ideal of $R$ is principal.
Note that we exclude fields in our definition of principal ideal domains.
A \textbf{discrete valuation ring} (DVR) is a local PID.
A \textbf{Dedekind domain} is a commutative noetherian domain $R$ of Krull dimension $1$ such that the localization $R_{\pid}$ is a DVR for every $\pid \in \mSpec(R)$. Note that we include commutativity in our definition of a Dedekind domain.
\end{para}

\section{Preliminaries on finitely presented functors}\label{section:fpfunctors}

We recall some aspects of the theory of finitely presented functors on finitely presented modules. The notion of a finitely presented functor goes back to Auslander \cite{AuslanderCoherent}.

\begin{definition}\label{definition:fp_func}
Let $\AC$ be an additive category. 
A functor $\mathcal{G}$ of type $\AC \longrightarrow \Ab$
is called \textbf{finitely presented} if there exist objects $A, B \in \AC$, a morphism $A \xrightarrow{\alpha} B$, and an exact sequence of functors (called a \textbf{presentation of $\mathcal{G}$ given by $\alpha$}):
\begin{center}
   \begin{tikzpicture}[mylabel/.style={fill=white},baseline=(A)]
        \coordinate (r) at (4.5,0);
        \node (A) {$\Hom( B, - )$};
        \node (B) at ($(A)+(r)$) {$\Hom( A, - )$};
        \node (C) at ($(B) + 0.5*(r)$) {$\mathcal{G}$};
        \node (D) at ($(C) + 0.25*(r)$) {$0$.};
        \draw[->,thick] (A) to node[above]{$\Hom( \alpha, - )$} (B);
        \draw[->,thick] (B) to (C);
        \draw[->,thick] (C) to (D);
  \end{tikzpicture}
\end{center}
A morphism of finitely presented functors is a natural transformation.
We denote the resulting category by $\AC\modl$.
\end{definition}

\begin{para}\label{para:up_of_Rmodmod}
Let $R$ be a ring.
When we set $\AC := R\modl$ within \Cref{definition:fp_func}, we get the category $R\modl\modl$.
It is an abelian category and a model of the free abelian category of $R$. More precisely, we have a canonical functor
\[
R \rightarrow R\modl\modl
\]
where we regard $R$ as a category with a single object and with morphisms given by the ring elements of $R$.
This canonical functor sends the single object of $R$ to the \textbf{forgetful functor} $\mathcal{F} := \Hom(R,-): R\modl \rightarrow \Ab$. Now, the universal property of $R\modl\modl$ states that we have an equivalence of categories
\[
\Hom_{\text{exact}}( R\modl\modl, \AC ) \xrightarrow{\simeq} \Hom_{\text{additive}}( R, |\AC| )
\]
given by restricting an exact functor of type $R\modl\modl \rightarrow \AC$, where $\AC$ is an abelian category, along the canonical functor $R \rightarrow R\modl\modl$ in order to get an additive functor of type $R \rightarrow |\AC|$.
\end{para}

\begin{remark}
The category $R\modl\modl$ is skeletally small, i.e., its isomorphism classes form a set. Thus, up to equivalence, we can treat $R\modl\modl$ as a small category, i.e, as a category whose objects form a set.
\end{remark}

\begin{remark}\label{remark:ev_at_RMod}
Moreover, we can regard $R\modl\modl$ as a full subcategory of $R\Modl\modl$.
In particular, we can evaluate $\mathcal{G} \in R\modl\modl$ at an arbitrary $R$-module (see \cite[Paragraph 4.14]{PosBehI} for details).
\end{remark}

We can describe all projectives and injectives in $R\modl\modl$:

\begin{lemma}\label{lemma:projectives_injectives_in_Rmodmod}
We have equivalences of categories
\begin{align*}
(R\modl)^{\op} &\xrightarrow{\simeq} \Proj( R\modl\modl ) \\
M &\mapsto \Hom(M,-)
\end{align*}
and
\begin{align*}
\modr R &\xrightarrow{\simeq} \Inj( R\modl\modl ) \\
N &\mapsto (N \otimes -)
\end{align*}
\end{lemma}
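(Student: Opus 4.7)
The plan is to establish the two equivalences separately: the projective one is essentially formal, following from the Yoneda lemma plus idempotent completeness of $R\modl$, whereas the injective one requires a deeper duality argument.

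For the projective equivalence, I would invoke the Yoneda isomorphism
\[
\Hom_{R\modl\modl}(\Hom(M,-), \mathcal{G}) \cong \mathcal{G}(M)
\]
for $M \in R\modl$ and $\mathcal{G} \in R\modl\modl$, and observe that evaluation at $M$ is exact (since short exact sequences in $R\modl\modl$ are tested pointwise). This shows that every representable $\Hom(M,-)$ is projective, and Yoneda also gives that $M \mapsto \Hom(M,-)$ is contravariantly fully faithful. By \Cref{definition:fp_func}, representables generate $R\modl\modl$, so every projective is a summand of some $\Hom(M,-)$. Since $R\modl$ is idempotent complete (summands of finitely presented modules are themselves finitely presented), Yoneda identifies idempotents on $\Hom(M,-)$ with idempotents of $M$, and splitting them in $R\modl$ shows that summands of representables are themselves representable.

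For the injective equivalence, the first step is to verify that $N \otimes -$ is finitely presented: tensoring a free presentation $R^m \to R^n \to N \to 0$ of $N \in \modr R$ with objects of $R\modl$ yields an exact sequence
\[
\Hom(R^m,-) \to \Hom(R^n,-) \to N \otimes - \to 0,
\]
using the canonical identification $R^k \otimes - \cong \Hom(R^k,-)$. Full faithfulness of $N \mapsto (N \otimes -)$ would then follow by evaluating natural transformations at the object $R$: a transformation $\phi : N \otimes - \to N' \otimes -$ restricts to a right $R$-module homomorphism $\phi_R : N \to N'$ (the right action coming from right-multiplication endomorphisms of $R$), and this assignment is inverse to $N \mapsto (N \otimes -)$ because both tensor functors are determined by their value at $R$ via right exactness.

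The main work lies in showing $N \otimes -$ is injective and that every injective has this form. The classical strategy, due to Auslander and refined via the Gruson--Jensen duality, is to construct a contravariant exact equivalence between $R\modl\modl$ and the analogous category of finitely presented functors on $\modr R$ that interchanges the representable and tensor embeddings; under such a duality, projectives and injectives are swapped, immediately yielding both injectivity of every $N \otimes -$ and the classification of all injectives. Alternatively, one can argue directly that the functors $N \otimes -$ form an injective cogenerating class and invoke idempotent completeness of $\modr R$ to conclude that summands are again tensor functors. The main obstacle throughout is this injective side: unlike the Yoneda-based projective argument, verifying injectivity of $N \otimes -$ for arbitrary $N$ relies on a substantial duality result that forms the technical heart of the proof.
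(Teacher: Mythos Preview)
Your proposal is correct and follows essentially the same approach as the paper: Yoneda plus idempotent completeness of $R\modl$ for the projective side, and the Auslander--Gruson--Jensen duality (stated in the paper as \Cref{theorem:agj_duality}) for the injective side. The paper's argument is more terse, obtaining finite presentation and full faithfulness of $N \mapsto (N \otimes -)$ implicitly from the duality rather than verifying them separately as you do.
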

\begin{proof}
All hom functors are projective by the Yoneda lemma.
Moreover, for $\mathcal{G} \in R\modl\modl$, there is an epimorphism from a hom functor onto $\mathcal{G}$. Thus, if $\mathcal{G}$ is projective, it is a direct summand of a hom functor. Since $(R\modl)^{\op}$ is idempotent complete, $\mathcal{G}$ is a hom functor.
The statement for injectives follows from the Auslander-Gruson-Jensen duality, see \Cref{theorem:agj_duality}.
\end{proof}

\begin{theorem}[Auslander-Gruson-Jensen duality]\label{theorem:agj_duality}
We have an equivalence
\[
\DAGJ: (R\modl\modl)^{\op} \rightarrow R^{\op}\modl\modl
\]
with $\DAGJ( \Hom( M, - ) ) \cong (- \otimes M)$ for $M \in R\modl$.
\end{theorem}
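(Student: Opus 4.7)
The plan is to construct $\DAGJ$ via the universal property of the free abelian category (recalled in \Cref{para:up_of_Rmodmod}) and to produce a quasi-inverse by the symmetric construction. By \Cref{lemma:projectives_injectives_in_Rmodmod} applied to $R^{\op}$ in place of $R$, the functor $\mathcal{F}' := (- \otimes_R R): \modr R \rightarrow \Ab$ is injective in $R^{\op}\modl\modl$, and its endomorphism ring is identified via that equivalence with $\End_{R\modl}(R) \cong R^{\op}$. Passing to opposites gives a canonical ring isomorphism $R \cong \End_{(R^{\op}\modl\modl)^{\op}}(\mathcal{F}')$, whence an additive functor $R \to |(R^{\op}\modl\modl)^{\op}|$. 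The universal property extends this uniquely (up to natural isomorphism) to an exact functor $R\modl\modl \to (R^{\op}\modl\modl)^{\op}$, equivalently an exact $\DAGJ: (R\modl\modl)^{\op} \rightarrow R^{\op}\modl\modl$ sending the forgetful $\mathcal{F} = \Hom(R,-)$ to $\mathcal{F}'$.

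To verify $\DAGJ(\Hom(M,-)) \cong (- \otimes M)$ for arbitrary $M \in R\modl$, I pick a finite free presentation $R^a \xrightarrow{\phi} R^b \to M \to 0$. Left exactness of $\Hom$ in its first slot produces an exact sequence
\[
0 \to \Hom(M,-) \to \mathcal{F}^b \xrightarrow{\Hom(\phi,-)} \mathcal{F}^a
\]
in $R\modl\modl$, and exactness of $\DAGJ$ transports it to
\[
(-\otimes R^a) \xrightarrow{-\otimes \phi} (-\otimes R^b) \to \DAGJ(\Hom(M,-)) \to 0
\]
in $R^{\op}\modl\modl$; right exactness of the tensor product then identifies the cokernel with $(- \otimes M)$.

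Applying the symmetric construction to $R^{\op}$ yields an exact $\DAGJ': (R^{\op}\modl\modl)^{\op} \to R\modl\modl$ with $\DAGJ'(\Hom(N,-)) \cong (- \otimes N)$ for all $N \in \modr R$. The compositions $\DAGJ \circ (\DAGJ')^{\op}$ and $\DAGJ' \circ \DAGJ^{\op}$ are exact endofunctors of $R^{\op}\modl\modl$ and $R\modl\modl$; by the previous step each sends the respective forgetful functor to itself, and tracing through the ring identifications shows that each induces the identity on the relevant endomorphism ring. The uniqueness in the universal property then forces both compositions to be naturally isomorphic to the identity, so $\DAGJ$ is the claimed equivalence. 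The main obstacle in this plan is the opposite-taking bookkeeping in the first step: one has to ensure that the anti-isomorphism $\End_{R\modl}(R) \cong R^{\op}$ is correctly untwisted by the passage to the opposite category, so that the ring map supplied to the universal property really is the identity on $R$; once this is set up, the rest of the argument is formal.
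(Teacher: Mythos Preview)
The paper does not give a proof of this theorem; it simply refers to \cite[Theorem 5.13]{PosBehI}. Your proposal therefore goes well beyond the paper by actually sketching an argument, and the strategy you outline --- using the universal property of the free abelian category to build $\DAGJ$ and its quasi-inverse, then invoking uniqueness to identify the compositions with the identity --- is a correct and standard route to the result.

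There is, however, a genuine circularity in your write-up relative to this paper: you invoke \Cref{lemma:projectives_injectives_in_Rmodmod} to identify the endomorphism ring of $\mathcal{F}' = (- \otimes_R R)$ via the description of injectives in $R^{\op}\modl\modl$, but the paper's proof of that injective description explicitly relies on the Auslander--Gruson--Jensen duality you are trying to establish. The fix is easy and you essentially already have it: $\mathcal{F}' \cong \Hom_{\modr R}(R,-)$ is the forgetful functor on $\modr R$, hence representable, and the Yoneda lemma alone gives $\End_{R^{\op}\modl\modl}(\mathcal{F}') \cong \End_{\modr R}(R)^{\op} \cong R^{\op}$, so that $\End_{(R^{\op}\modl\modl)^{\op}}(\mathcal{F}') \cong R$. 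With that replacement your argument is self-contained. Your own caveat about the opposite-taking bookkeeping is well placed: the only remaining work is checking that the $R$-action on $\mathcal{F}'$ supplied to the universal property matches the one coming from the identification above, which is a routine but necessary verification.
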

\begin{proof}
See, e.g., \cite[Theorem 5.13]{PosBehI}.
\end{proof}

\begin{definition}\label{definition:copresentations}
An exact sequence in $R\modl\modl$ of the form
\begin{center}
   \begin{tikzpicture}[mylabel/.style={fill=white}]
    \coordinate (r) at (4,0);
    \node (A) {$0$};
    \node (B) at ($(A)+0.5*(r)$) {$\mathcal{G}$};
    \node (C) at ($(B) + 0.5*(r)$) {$(N \otimes -)$};
    \node (D) at ($(C) + (r)$) {$(N' \otimes -)$};
    \draw[->,thick] (A) to (B);
    \draw[->,thick] (B) to (C);
    \draw[->,thick] (C) to node[above]{$(\alpha \otimes -)$}(D);
    \end{tikzpicture}
\end{center}
is called a \textbf{copresentation} of $\mathcal{G} \in R\modl\modl$ given by the morphism
$\alpha: N \rightarrow N'$ in $\modr R$.
By \Cref{theorem:agj_duality}, every object in $R\modl\modl$ has such a copresentation.
\end{definition}

\begin{lemma}\label{lemma:projectives_length2}
If $\mathcal{G} \in R\modl\modl$ has a presentation given by $\alpha: M \rightarrow N$, then we get a projective resolution of $\mathcal{G}$ of length $2$ as follows:
\[
   \begin{tikzpicture}[mylabel/.style={fill=white},baseline=(A)]
        \coordinate (r) at (4.5,0);
        \node (Za) {$0$};
        \node (Z) at ($(Za)+0.5*(r)$){$\Hom( \cokernel( \alpha ), - )$};
        \node (A) at ($(Z)+(r)$) {$\Hom( N, - )$};
        \node (B) at ($(A)+(r)$) {$\Hom( M, - )$};
        \node (C) at ($(B) + 0.5*(r)$) {$\mathcal{G}$};
        \node (D) at ($(C) + 0.25*(r)$) {$0$};
        \draw[->,thick] (Za) to (Z);
        \draw[->,thick] (Z) to node[above]{$\Hom( \epsilon, - )$} (A);
        \draw[->,thick] (A) to node[above]{$\Hom( \alpha, - )$} (B);
        \draw[->,thick] (B) to (C);
        \draw[->,thick] (C) to (D);
  \end{tikzpicture}
\]
where $\epsilon: N \rightarrow \cokernel( \alpha )$ denotes the cokernel projection.
\end{lemma}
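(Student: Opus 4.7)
The plan is to build the claimed $4$-term sequence by prepending a short exact sequence to the given presentation, and then to verify exactness pointwise using the left exactness of the contravariant hom functor.

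First I would start with the cokernel exact sequence in $R\modl$,
\[
M \xrightarrow{\alpha} N \xrightarrow{\epsilon} \cokernel(\alpha) \to 0,
\]
which is the defining presentation of the cokernel in the abelian category $R\modl$. Applying the left-exact contravariant functor $\Hom(-,X)$ for an arbitrary $X \in R\modl$ yields the exact sequence
\[
0 \to \Hom(\cokernel(\alpha),X) \xrightarrow{\Hom(\epsilon,X)} \Hom(N,X) \xrightarrow{\Hom(\alpha,X)} \Hom(M,X).
\]
Since exactness of a sequence of functors $R\modl \to \Ab$ is tested pointwise (by \Cref{remark:ev_at_RMod} and the fact that sequences in the functor category are exact iff they are exact at every object), this shows that
\[
0 \to \Hom(\cokernel(\alpha),-) \xrightarrow{\Hom(\epsilon,-)} \Hom(N,-) \xrightarrow{\Hom(\alpha,-)} \Hom(M,-)
\]
is exact in $R\modl\modl$.

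Next I would splice this onto the presentation of $\mathcal{G}$ given by $\alpha$ from \Cref{definition:fp_func}, namely the exact sequence
\[
\Hom(N,-) \xrightarrow{\Hom(\alpha,-)} \Hom(M,-) \to \mathcal{G} \to 0.
\]
Since both sequences agree on their overlap (the morphism $\Hom(\alpha,-)$), combining them produces the displayed $4$-term exact sequence. Finally, by the first equivalence of \Cref{lemma:projectives_injectives_in_Rmodmod} the functors $\Hom(\cokernel(\alpha),-)$, $\Hom(N,-)$ and $\Hom(M,-)$ are all projective in $R\modl\modl$, so this is a projective resolution of length $2$.

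There is no real obstacle here; the only subtle point is that one must use the contravariant hom functor's left exactness in the ordinary module category (which guarantees exactness at $\Hom(\cokernel(\alpha),-)$ and at $\Hom(N,-)$) rather than any more sophisticated property of the functor category $R\modl\modl$ itself. Everything else is a formal gluing.
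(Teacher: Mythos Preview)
Your proof is correct and is precisely the ``direct computation'' the paper alludes to. The only minor slip is referring to $R\modl$ as an abelian category (it need not be for arbitrary $R$), but this is harmless: cokernels of maps between finitely presented modules are always finitely presented, and the universal property of the cokernel is all you actually use to obtain the left-exact sequence after applying $\Hom(-,X)$.
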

\begin{proof}
This is a direct computation.
\end{proof}

\begin{para}\label{para:bijective_objects}
Let $R$ be a ring and let $P$ be a finitely presented projective left $R$-module. Then $P^{\ast} := \Hom_R( P, R )$ is a finitely presented projective right $R$-module and we have an isomorphism
\[
\Hom(P,-) \cong (P^{\ast} \otimes -) \in R\modl\modl.
\]
In particular, these functors are both projective and injective objects in $R\modl\modl$.
\end{para}

\begin{para}\label{para:Auslander_transpose}
For $M \in R\modl$, let
\begin{center}
  \begin{tikzpicture}[mylabel/.style={fill=white}]
      \coordinate (r) at (3,0);
      
      \node (A) {$Q$};
      \node (B) at ($(A)+(r)$) {$P$};
      \node (C) at ($(B) + (r)$) {$M$};
      \node (D) at ($(C) + (r)$) {$0$};
      
      \draw[->,thick] (A) to (B);
      \draw[->,thick] (B) to (C);
      \draw[->,thick] (C) to (D);
  \end{tikzpicture}
\end{center}
be an exact sequence with $P,Q$ finitely presented projective $R$-modules.
Then the \textbf{Auslander transpose} of $M$ is the finitely presented right $R$-module $\Tr( M )$ defined by the following exact sequence in $\modr R$:
\begin{center}
  \begin{tikzpicture}[mylabel/.style={fill=white}]
      \coordinate (r) at (3,0);
      
      \node (A) {$0$};
      \node (B) at ($(A)+(r)$) {$\Tr(M)$};
      \node (C) at ($(B) + (r)$) {$Q^{\ast}$};
      \node (D) at ($(C) + (r)$) {$P^{\ast}$.};
      
      \draw[->,thick] (D) to (C);
      \draw[->,thick] (C) to (B);
      \draw[->,thick] (B) to (A);
  \end{tikzpicture}
\end{center}
The Auslander transpose plays an important role in Auslander-Reiten theory.
Moreover, it clearly depends on the presentation of the left module, and the usual approach of making the Auslander transpose functorial is by passing to stable module categories (see, e.g., \cite[Chapter 4]{ARS97} or \cite[Chapter 6]{Facchini}).
In \Cref{lemma:triangulated}, we will show how the Auslander transpose can be used to construct a triangulated equivalence $\Kb( R \modl )^{\op} \simeq \Kb( \modr R )$.
\end{para}

The following lemma simultaneously describes a projective resolution of a tensor functor and an injective resolution of a hom functor.

\begin{lemma}\label{lemma:proj_res_of_tensor_functor}
In the context of Paragraph \Cref{para:Auslander_transpose}, the following sequence is exact:
\begin{center}
  \begin{tikzpicture}[mylabel/.style={fill=white}]
      \coordinate (r) at (3.5,0);
      \coordinate (d) at (0,-1);
      
      \node (A) {$0$};
      \node (B) at ($(A)+0.75*(r)$) [scale=1]{$\Hom(M,-)$};
      \node (C) at ($(B) + (r)$) [scale=1]{$\Hom(P,-)$};
      \node (Cp) at ($(C) + (d)$) [scale=1]{$(P^{\ast} \otimes -)$};
      \node (D) at ($(C) + 0.85*(r)$) [scale=1]{$\Hom(Q,-)$};
      \node (Dp) at ($(D) + (d)$) [scale=1]{$(Q^{\ast} \otimes -)$};
      \node (E) at ($(D) + (r)$) [scale=1]{$(\Tr(M) \otimes -)$};
      \node (F) at ($(E) + 0.75*(r)$) {$0$};
      
      \draw[->,thick] (A) to (B);
      \draw[->,thick] (B) to (C);
      \draw[->,thick] (C) to (D);
      \draw[->,thick] (D) to (E);
      \draw[->,thick] (E) to (F);
      \draw[draw=none] (C) to node[rotate=90]{$\cong$} (Cp);
      \draw[draw=none] (D) to node[rotate=90]{$\cong$} (Dp);
  \end{tikzpicture}
\end{center}
\end{lemma}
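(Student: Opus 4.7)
The plan is to exhibit the claimed sequence as the concatenation of a left-exact sequence coming from applying $\Hom(-,X)$ to the projective presentation of $M$ and a right-exact sequence coming from applying $-\otimes X$ to the defining presentation of the Auslander transpose. All statements will be checked pointwise at an arbitrary $X \in R\modl$.

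First, apply $\Hom(-, X)$ to the exact sequence $Q \to P \to M \to 0$ from \Cref{para:Auslander_transpose}. Since contravariant $\Hom$ is left exact, this yields the exact sequence
\[
0 \to \Hom(M, X) \to \Hom(P, X) \to \Hom(Q, X).
\]
This gives exactness of the claimed sequence at $\Hom(M,-)$ and at $\Hom(P,-)$, where the maps assemble into natural transformations in $X$.

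Second, the defining sequence of the Auslander transpose in \Cref{para:Auslander_transpose} reads $P^{\ast} \to Q^{\ast} \to \Tr(M) \to 0$, which is exact in $\modr R$. Applying the right-exact functor $-\otimes X$ yields the exact sequence
\[
P^{\ast} \otimes X \to Q^{\ast} \otimes X \to \Tr(M) \otimes X \to 0.
\]
Using the natural isomorphisms $\Hom(P,-) \cong (P^{\ast} \otimes -)$ and $\Hom(Q,-) \cong (Q^{\ast} \otimes -)$ from \Cref{para:bijective_objects}, this transports to an exact sequence $\Hom(P, X) \to \Hom(Q, X) \to \Tr(M) \otimes X \to 0$, giving exactness at $\Hom(Q,-)$ and at $(\Tr(M) \otimes -)$.

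The only remaining point is to verify that the two sequences glue correctly at the middle map, i.e., that the map $\Hom(P,-) \to \Hom(Q,-)$ induced by $\varphi\colon Q \to P$ agrees, under the isomorphisms of \Cref{para:bijective_objects}, with the map $(P^{\ast} \otimes -) \to (Q^{\ast} \otimes -)$ induced by $\varphi^{\ast} = \Hom(\varphi, R)\colon P^{\ast} \to Q^{\ast}$. This is a naturality check: on elements, the isomorphism $\Hom(P, X) \cong P^{\ast} \otimes X$ is the canonical one that sends a morphism $P \to X$ to the corresponding tensor, and it is natural in $P$, so pre-composition with $\varphi$ on the left corresponds to the dualized map on the right. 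I expect this compatibility check to be the only place requiring care; once it is in hand, the two exact sequences splice into the desired one. The exactness assertion then holds pointwise for every $X$, and hence as an exact sequence of functors in $R\modl\modl$.
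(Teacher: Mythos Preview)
Your proof is correct and is precisely the kind of pointwise verification the paper has in mind: the paper's own proof reads, in full, ``This is a direct computation.'' You have simply spelled out that computation, splitting the sequence into the left-exact part from contravariant $\Hom$ and the right-exact part from tensor, together with the naturality check that the two glue along $\Hom(\varphi,-) \cong (\varphi^{\ast}\otimes -)$.
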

\begin{proof}
This is a direct computation.
\end{proof}

\section{The category \texorpdfstring{$\ab{M}$}{ab(M)}}\label{section:serrequots_of_free_abelian_cats}

In this section, we turn our attention to Serre quotients of $R\modl\modl$.

\begin{para}\label{para:Serre_subcategories}
Let $\AC$ be an abelian category.
An additive full subcategory $\CC \subseteq \AC$ is called a \textbf{Serre subcategory} if it is closed under passing to subobjects, quotient objects, and extensions.
Given a class $T \subseteq \obj( \AC )$ of objects, we denote by $\langle T \rangle \subseteq \AC$ the Serre subcategory generated by $T$, i.e., the smallest Serre subcategory which contains $T$.
More concretely, an object $A \in \AC$ lies in $\langle T \rangle$ if there exists a chain of subobjects
\[
A = A_0 \supseteq A_1 \supseteq A_2 \supseteq \dots \supseteq A_{n-1} \supseteq A_n = 0
\]
for $n \geq 0$ such that each factor $A_{i}/A_{i+1}$ is isomorphic to a subfactor of an object in $T$ (see, e.g., \cite[Proposition 2.4.(6)]{Kanda}
\end{para}

\begin{para}
Let $\AC$ be an abelian category and let $\CC \subseteq \AC$ be a Serre subcategory.
The \textbf{Serre quotient} $\AC/\CC$ is an abelian category which comes equipped with a canonical exact quotient functor $\AC \rightarrow \AC/\CC$ whose \textbf{kernel}, i.e., the full subcategory spanned by all objects which are mapped to zero, is given by $\CC$.
It is universal (initial) among all exact functors whose kernel contains $\CC$.
For more details, see \cite[Section 6.1]{PosBehI}.
\end{para}

\begin{para}\label{para:def_ab}
Let $R$ be a ring and let $M \in R\Modl$.
By \Cref{remark:ev_at_RMod}, we can evaluate finitely presented functors at $M$, which gives us an exact functor
\begin{align*}
\ev{\M}: R\modl\modl &\rightarrow \Ab \\
\mathcal{G} &\mapsto \mathcal{G}(\M).
\end{align*}
We define the following Serre quotient category:
\[
\ab{M} := \frac{R\modl\modl}{\kernel(\ev{\M})}.
\]
This category can be regarded as an abelian ambient category for behaviors in algebraic systems theory (see \cite{PosBehI} for details).
We use the following notation for the canonical quotient functor on objects:
\begin{align*}
R\modl\modl &\rightarrow \ab{M} \\
\mathcal{G} &\mapsto  \asBeh{\mathcal{G}}{\M} 
\end{align*}
and note that
    \[
    \asBeh{\mathcal{G}}{\M} \cong 0 \text{\hspace{2em}$\Longleftrightarrow$ \hspace{2em}} \mathcal{G}(\M) \cong 0.
    \]
\end{para}

We recall several useful cases in which we can describe $\ab{M}$ more explicitly.

\begin{para}\label{para:defect}
Let $R$ be a left coherent ring.
Let $R \rightarrow (R\modl)^{\op}$ be the functor that maps $r \in R$ to the endomorphism $(x \mapsto xr) \in \Hom_R( R, R )$.
Then the functor induced by the universal property of the free abelian category
\[
R\modl\modl \xrightarrow{\defect} (R\modl)^{\op}
\]
is called the \textbf{(contravariant) defect}. It is an exact functor which sends $\Hom(N,-)$ to $N$ for every $N \in R\modl$ (see \cite[Example 5.9]{PosBehI}).
\end{para}

\begin{para}\label{para:defect_equiv}
A module $M \in R\Modl$ is \textbf{fp-injective} if the functor $\Ext^1( -, M )$ vanishes on all finitely presented modules. Note that every injective module is in particular fp-injective.
Moreover, $M$ is an \textbf{fp-cogenerator} if $\Hom( -, M )$ is faithful on the category of all finitely presented functors. Note that every cogenerator is in particular an fp-cogenerator.
These notions are relevant for the following situation (see \cite[Corollary 6.29]{PosBehI}): let $R$ be a left coherent ring and let $M \in R\Modl$ be fp-injective and an fp-cogenerator.
Then we get a commutative (up to natural isomorphism) diagram of exact functors
\begin{center}
  \begin{tikzpicture}[mylabel/.style={fill=white}]
    \coordinate (r) at (4,0);
    \coordinate (d) at (0,-2);
    \node (A) {$R\modl\modl$};
    \node (B) at ($(A) + 2*(r)$) {$(R\modl)^{\op}$};
    \node (C) at ($(A) + (d) + (r)$) {$\ab{M}$};
    \draw[->] (A) to node[above]{$\defect$} (B);
    \draw[->] (A) to (C);
    \draw[->] (C) to node[right, yshift=-0.5em]{$\simeq$}(B);
  \end{tikzpicture} 
\end{center}
where the unlabeled arrow is the canonical quotient functor and where the functor induced by the universal property of the Serre quotient is an equivalence.
\end{para}

\begin{para}\label{para:codefect}
Let $R$ be a right coherent ring.
Let $R \rightarrow \modr R$ be the functor that maps $r \in R$ to the endomorphism $(x \mapsto rx) \in \Hom_R( R, R )$.
Then the functor induced by the universal property of the free abelian category
\[
R\modl\modl \xrightarrow{\codefect} (\modr R)
\]
is called the \textbf{(covariant) defect}. It is an exact functor which sends $(N \otimes_R -)$ to $N$ for every $N \in \modr R$ (see \cite[Example 5.19]{PosBehI}).
\end{para}

\begin{para}\label{para:codefect_equiv}
A module $M \in R\Modl$ is \textbf{fp-faithfully flat} if it is flat and if $(- \otimes M)$ is faithful on the category of all finitely presented functors. Note that every faithfully flat module is in particular fp-faithfully flat.
This notion is relevant for the following situation (see \cite[Corollary 6.43]{PosBehI}):
Let $R$ be a right coherent ring and let $M \in R\Modl$ be an fp-faithfully flat module.
Then we get a commutative (up to natural isomorphism) diagram of exact functors
\begin{center}
  \begin{tikzpicture}[mylabel/.style={fill=white}]
    \coordinate (r) at (4,0);
    \coordinate (d) at (0,-2);
    \node (A) {$R\modl\modl$};
    \node (B) at ($(A) + 2*(r)$) {$\modr R$};
    \node (C) at ($(A) + (d) + (r)$) {$\ab{M}$};
    \draw[->] (A) to node[above]{$\codefect$} (B);
    \draw[->] (A) to (C);
    \draw[->] (C) to node[right, yshift=-0.5em]{$\simeq$}(B);
  \end{tikzpicture} 
\end{center}
where the unlabeled arrow is the canonical quotient functor and where the functor induced by the universal property of the Serre quotient is an equivalence.
\end{para}

\begin{lemma}\label{lemma:compute_sigma_ast}
Let $\sigma: R \rightarrow S$ be a ring morphism.
Then we have a canonical functor
\[
\sigma_{\ast}: R\modl\modl \rightarrow S\modl\modl
\]
induced by the universal property of $R\modl\modl$ applied to the functor
\[
R \xrightarrow{\sigma} S \rightarrow S\modl\modl.
\]
Concretely, if
\[
   \begin{tikzpicture}[mylabel/.style={fill=white},baseline=(A)]
        \coordinate (r) at (4.5,0);
        \node (A) {$\Hom( N, - )$};
        \node (B) at ($(A)+(r)$) {$\Hom( M, - )$};
        \node (C) at ($(B) + 0.5*(r)$) {$\mathcal{G}$};
        \node (D) at ($(C) + 0.25*(r)$) {$0$};
        \draw[->,thick] (A) to node[above]{$\Hom( \alpha, - )$} (B);
        \draw[->,thick] (B) to (C);
        \draw[->,thick] (C) to (D);
  \end{tikzpicture}
\]
is a presentation of $\mathcal{G} \in R\modl\modl$, then
\[
   \begin{tikzpicture}[mylabel/.style={fill=white},baseline=(A)]
        \coordinate (r) at (8,0);
        \node (A) {$\Hom( S \otimes_R N, - )$};
        \node (B) at ($(A)+(r)$) {$\Hom( S \otimes_R M, - )$};
        \node (C) at ($(B) + 0.5*(r)$) {$\sigma_{\ast}\mathcal{G}$};
        \node (D) at ($(C) + 0.25*(r)$) {$0$};
        \draw[->,thick] (A) to node[above]{$\Hom( S \otimes_R \alpha, - )$} (B);
        \draw[->,thick] (B) to (C);
        \draw[->,thick] (C) to (D);
  \end{tikzpicture}
\]
is a presentation of $\sigma_{\ast}(\mathcal{G})$.
\end{lemma}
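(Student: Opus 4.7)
The plan is to invoke the universal property from \Cref{para:up_of_Rmodmod} first to obtain $\sigma_\ast$ and then to compute it on a given presentation by reducing to representables. The existence of $\sigma_\ast$ is automatic: the composite $R \xrightarrow{\sigma} S \to S\modl\modl$ is an additive functor into $|S\modl\modl|$, hence the universal property yields an (up to natural isomorphism unique) exact functor $\sigma_\ast: R\modl\modl \to S\modl\modl$ whose restriction along the canonical functor $R \to R\modl\modl$ recovers this composite. By construction $\sigma_\ast(\Hom_R(R,-)) \cong \Hom_S(S,-)$, and additivity gives $\sigma_\ast(\Hom_R(R^n,-)) \cong \Hom_S(S^n,-)$ for every $n \geq 0$.

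Next I would compute $\sigma_\ast$ on a general representable $\Hom_R(M,-)$. Choose a free presentation $R^a \xrightarrow{\beta} R^b \to M \to 0$ in $R\modl$; since $\Hom_R(-,-)$ turns cokernels in the first argument into kernels, this yields a left exact sequence
\[
0 \to \Hom_R(M,-) \to \Hom_R(R^b,-) \xrightarrow{\Hom_R(\beta,-)} \Hom_R(R^a,-)
\]
in $R\modl\modl$. Applying the exact functor $\sigma_\ast$ and using the identifications of the previous paragraph exhibits $\sigma_\ast(\Hom_R(M,-))$ as the kernel of $\Hom_S(S \otimes_R \beta, -)$. Since $S \otimes_R -$ is right exact, $\cokernel(S \otimes_R \beta) \cong S \otimes_R M$ in $S\modl$, and hence this kernel identifies canonically with $\Hom_S(S \otimes_R M, -)$.

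To conclude, I would apply the exact functor $\sigma_\ast$ to the given presentation $\Hom_R(N,-) \xrightarrow{\Hom_R(\alpha,-)} \Hom_R(M,-) \to \mathcal{G} \to 0$ and rewrite the two left-hand terms using the formula just established, which yields the claimed exact sequence
\[
\Hom_S(S \otimes_R N,-) \xrightarrow{\Hom_S(S \otimes_R \alpha, -)} \Hom_S(S \otimes_R M,-) \to \sigma_\ast \mathcal{G} \to 0
\]
as a presentation of $\sigma_\ast \mathcal{G}$.

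The main technical point to pin down is the naturality of the identification $\sigma_\ast(\Hom_R(M,-)) \cong \Hom_S(S \otimes_R M, -)$ in $M$: the construction of this isomorphism uses a chosen presentation of $M$, so one has to verify that it is independent of that choice and that a morphism $\alpha: M \to N$ in $R\modl$ is indeed sent to $\Hom_S(S \otimes_R \alpha, -)$ under it. This is essentially a diagram chase lifting $\alpha$ to a morphism of chosen presentations and comparing; alternatively, one can argue abstractly that both $M \mapsto \sigma_\ast(\Hom_R(M,-))$ and $M \mapsto \Hom_S(S \otimes_R M,-)$ are additive contravariant functors $R\modl \to S\modl\modl$ which agree on the generator $R$, and transfer the isomorphism along the defining universal property.
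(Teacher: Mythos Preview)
Your argument is correct and is exactly the ``direct computation'' the paper has in mind (the paper's proof consists of the single sentence ``This is a direct computation''). You have simply written out the details: obtain $\sigma_\ast$ from the universal property, identify its effect on representables by reducing to free modules via a presentation and using exactness, and then apply $\sigma_\ast$ to the given presentation of $\mathcal{G}$; the naturality check you flag is indeed the only point requiring care, and both of your suggested verifications work.
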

\begin{proof}
This is a direct computation.
\end{proof}

\begin{para}\label{para:description_of_sigma_ast}
Let $\sigma: R \rightarrow S$ be a ring morphism and let $M \in S\Modl$.
Then the universal property of the Serre quotient induces a functor (see \cite[Section 6.7]{PosBehI})
\[
\sigma_{\ast}: \ab{M|_{\sigma}} \rightarrow \ab{M}
\]
which makes the following square of exact functors commutative up to natural isomorphism:
\begin{center}
  \begin{tikzpicture}[mylabel/.style={fill=white}]
    \coordinate (r) at (3,0);
    \coordinate (d) at (0,-2);
    
    \node (11) {$R\modl\modl$};
    \node (12) at ($(11)+(r)$) {$\ab{M|_{\sigma}}$};

    \node (21) at ($(11) + (d)$){$S\modl\modl$};
    \node (22) at ($(21)+(r)$) {$\ab{M}$};
    
    \draw[->,thick] (11) to (12);

    \draw[->,thick] (21) to (22);

    \draw[->,thick] (11) to node[left]{$\sigma_{\ast}$} (21);
    \draw[->,thick] (12) to node[right]{$\sigma_{\ast}$} (22);
  \end{tikzpicture}
\end{center}
If $\sigma$ is a ring epimorphism (this includes the case of localizations of rings), then $\sigma_{\ast}: \ab{M|_{\sigma}} \rightarrow \ab{M}$ is an equivalence of categories \cite[Theorem 6.51]{PosBehI}.
\end{para}

\begin{example}
Let $R$ be a ring and let $I \subseteq R$ be an ideal.
Let $\sigma: R \rightarrow R/I$ denote the canonical ring epimorphism.
Let $M := R/I \in R/I\Modl$.
Then we can compute
\[
\ab{ M|_{\sigma} } \simeq \ab{ M } \simeq \modr R/I
\]
since $M$ is faithfully flat in $R/I\Modl$.
\end{example}

\begin{example}
Let $R$ be a commutative ring and let $\pid \subseteq R$ be a prime ideal.
Let $\sigma: R \rightarrow R_{\pid}$ denote the canonical localization map, which is a ring epimorphism.
Let $M := R_{\pid} \in R_{\pid}\Modl$.
Then we can compute
\[
\ab{ M|_{\sigma} } \simeq \ab{ M } \simeq \modr R_{\pid}
\]
since $M$ is faithfully flat in $R_{\pid}\Modl$.
\end{example}

\section{The Ziegler spectrum}\label{section:ziegler_spectrum}

The Ziegler spectrum $\Zg(R)$ of a ring $R$ is a topological space whose closed sets classify Serre subcategories of $R\modl\modl$.
We introduce this idea using the language of (co)frames.
For a thorough treatment of the Ziegler spectrum, we refer to \cite{PrestPSL}.
Moreover, we refer to \cite[Chapter IX]{MLSMI94} or \cite{Blechschmidt} for more details on the theory of frames.

Recall that a \textbf{partially ordered set} is a set equipped with a binary relation which is reflexive, transitive, and antisymmetric.

\begin{definition}
    A \textbf{frame} is a partially ordered set $F$ with arbitrary joins ($\bigvee$) and finite meets ($\wedge$), such that the infinite distributive law
    \[
    U \wedge \bigvee_i V_i = \bigvee_i (U \wedge V_i)
    \]
    holds for all $U, V_i \in F$, where $i$ ranges over some index set.
    A frame homomorphism is a monotone map which preserves arbitrary joins and finite meets.
    We denote the resulting category by $\Frm$.
\end{definition}

\begin{para}
Let $X$ be a topological space. We set $\mathcal{O}(X) := \{ U \subseteq X \mid \text{$U$ is open} \}$.
Then $\mathcal{O}(X)$ is a frame with partial order given by inclusion of subsets.
A continuous map $f: X \rightarrow Y$ defines a frame homomorphism $V \mapsto f^{-1}(V)$ of type $\mathcal{O}(Y) \rightarrow \mathcal{O}(X)$.
The assignment $X \mapsto \mathcal{O}(X)$ defines a functor $\Top \rightarrow \Frm^{\op}$, where $\Top$ is the category of topological spaces and continuous maps.
A frame is called \textbf{spatial} if it lies in the image of this functor.
\end{para}

\begin{para}\label{para:from_to_top}
We also get a functor in the other direction: $\Frm^{\op} \rightarrow \Top$.
For this, let $\{ 0 \leq 1 \}$ be the initial object in $\Frm$.
For any frame $F$, the set $\Hom_{\Frm}( F, \{ 0 \leq 1 \} )$ can be regarded as a topological space whose opens are given by $\{ \alpha \mid \alpha(U) = 1 \}$ for $U \in F$. This construction yields the desired functor.
The topological spaces in the image of this functor are precisely the \textbf{sober} spaces.
Moreover, we have an adjunction
\[
\mathcal{O}(-) \dashv \Hom_{\Frm}(-,\{ 0 \leq 1 \}),
\]
i.e., $\mathcal{O}(-)$ is left adjoint to $\Hom_{\Frm}(-,\{ 0 \leq 1 \})$.
This adjunction is idempotent, i.e., its associated monad is idempotent. It follows that this idempotent adjunction restricts to an equivalence of categories between sober spaces and spatial frames.
The associated monad of this adjunction (the so-called \textbf{sobrification}) applied to a topological space $X$ is in bijection with the set 
\[\Irr(X) := \{ Y \mid \text{$\emptyset \neq Y \subseteq X$ is closed and irreducible} \}\] as follows:
\begin{align*}
   \Irr(X) &\xrightarrow{ \cong } \Hom_{\Frm}(\mathcal{O}(X),\{ 0 \leq 1 \}) \\
   Y &\mapsto \big( U \mapsto  \begin{cases}
    1 & \text{$Y \cap U \neq \emptyset$} \\
    0 & \text{otherwise}
  \end{cases} \big)
\end{align*}
Via this bijection, the unit of this adjunction is given by
\begin{align*}
   X  &\rightarrow \Irr(X) \\
   P &\mapsto \overline{P},
\end{align*}
where $\overline{P}$ is the closure of a point $P \in X$.
This unit is injective if and only if $X$ is a 
\textbf{Kolmogorov space}, i.e., it is $T_0$, i.e., every pair of distinct points is topologically distinguishable, i.e., if $P \in X$ and $Q \in X$ have the same open neighborhoods, then $P = Q$.
We say that $X$ \textbf{has enough points} if this unit is surjective.
Since the unit is an open map, $X$ is sober if this unit is a bijection and therefore a homeomorphism, i.e., if every non-empty closed irreducible subset of $X$ is the closure of a uniquely determined point $P \in X$.
\end{para}

\begin{lemma}\label{lemma:serre_subcategories_a_frame}
Let $\AC$ be a small abelian category.
The set 
\[\Serre( \AC ) := \{ \CC \subseteq \AC \mid \text{$\CC$ is a Serre subcategory} \}\]
is a frame with partial order given by inclusion.
\end{lemma}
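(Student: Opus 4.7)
The plan is to exhibit the frame structure directly. First, I observe that the intersection of any family $(\CC_i)_i$ of Serre subcategories of $\AC$ is again a Serre subcategory, since each of the defining closure properties (closure under subobjects, quotient objects, and extensions) is preserved under intersection. This yields arbitrary meets in $\Serre(\AC)$, and in particular the finite meet $\CC \wedge \DC = \CC \cap \DC$. Smallness of $\AC$ ensures that $\Serre(\AC)$ is a set rather than a proper class, so that the frame axioms are meaningful.

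For joins, I define $\bigvee_i \CC_i := \langle \bigcup_i \CC_i \rangle$, the Serre subcategory generated by the union. This exists as the intersection of all Serre subcategories containing $\bigcup_i \CC_i$, and it is clearly the least upper bound with respect to inclusion. The explicit description recalled in Paragraph \ref{para:Serre_subcategories} then tells us that $A \in \bigvee_i \CC_i$ if and only if $A$ admits a finite chain of subobjects $A = A_0 \supseteq A_1 \supseteq \dots \supseteq A_n = 0$ whose factors $A_k/A_{k+1}$ are each isomorphic to a subfactor of some object in $\bigcup_i \CC_i$.

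The main step is verifying the infinite distributive law $\CC \wedge \bigvee_i \DC_i = \bigvee_i (\CC \wedge \DC_i)$. The inclusion $\supseteq$ is immediate since $\CC \wedge \DC_i \subseteq \CC \wedge \bigvee_i \DC_i$ for every index $i$. For $\subseteq$, take $A \in \CC \cap \bigvee_i \DC_i$ and choose, by the subfactor description above, a filtration $A = A_0 \supseteq \dots \supseteq A_n = 0$ together with objects $B_k \in \DC_{i_k}$ such that $A_k/A_{k+1}$ is a subfactor of $B_k$. Because $\CC$ is a Serre subcategory and $A \in \CC$, each $A_k/A_{k+1}$ is itself in $\CC$; simultaneously, because $\DC_{i_k}$ is a Serre subcategory containing $B_k$, it contains all subfactors of $B_k$, so $A_k/A_{k+1} \in \DC_{i_k}$. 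Hence $A_k/A_{k+1} \in \CC \cap \DC_{i_k} = \CC \wedge \DC_{i_k}$, and the same filtration witnesses $A \in \bigvee_i (\CC \wedge \DC_i)$.

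The only delicate point is the $\subseteq$ direction of the distributive law, and the key observation making it work is that the subfactor description of $\langle \bigcup_i \DC_i \rangle$ from Paragraph \ref{para:Serre_subcategories} is compatible with intersecting termwise against the Serre subcategory $\CC$; once this is set up, the argument is routine.
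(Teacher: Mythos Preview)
Your proof is correct and follows essentially the same approach as the paper: both identify meets as intersections and joins as generated Serre subcategories, and both verify the nontrivial inclusion of the infinite distributive law by invoking the filtration description from Paragraph~\ref{para:Serre_subcategories} and observing that each filtration factor lies simultaneously in $\CC$ (as a subfactor of $A$) and in some $\DC_{i_k}$ (as a subfactor of an object therein). Your write-up is slightly more explicit about why each factor lands in the relevant $\DC_{i_k}$, but the argument is the same.
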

\begin{proof}
Meets in $\Serre( \AC )$ are given by intersection, joins are given by generation, see Paragraph \Cref{para:Serre_subcategories}.
We need to show the infinite distributive law. For this, let $\{\CC_i\}_{i \in I}$ be a family of Serre subcategories of $\AC$ for some index set $I$ and let $\BC$ be a Serre subcategory of $\AC$.
It suffices to show $\BC \cap \langle \CC_i \mid i \in I \rangle \subseteq \langle \BC \cap \CC_i \mid i \in I \rangle$, since the other inclusion holds in any partially ordered set.
So, let $A \in \BC \cap \langle \CC_i \mid i \in I \rangle$. Then there exists a finite subset $J \subseteq I$ and objects $A_j \in \CC_j$ for $j \in J$ such that $A$ is an extension of those $A_j$.
Since $A$ lies in $\BC$, all its subfactors also lie in $\BC$, thus, $A_j \in \BC \cap \CC_j$ for $j \in J$ and hence $A \in \langle \BC \cap \CC_i \mid i \in I \rangle$.
\end{proof}

\begin{corollary}\label{corollary:AGJ_on_frames}
The Auslander-Gruson-Jensen duality yields an isomorphism of frames
\begin{align*}
    \Serre( R\modl\modl ) &\xrightarrow{\cong} \Serre( R^{\op}\modl\modl ) \\
    \CC &\mapsto \DAGJ(\CC) := \{ \mathcal{G} \mid \text{$\mathcal{G} \cong \DAGJ(\mathcal{H})$ for some $\mathcal{H} \in \CC$} \}
\end{align*}
\end{corollary}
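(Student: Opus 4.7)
The plan is to exploit the fact that the notion of Serre subcategory is self-dual together with the (contravariant) equivalence supplied by Auslander--Gruson--Jensen duality. Concretely, the three conditions defining a Serre subcategory---closure under subobjects, quotient objects, and extensions---are exchanged or preserved under passage to the opposite category, so any abelian equivalence $\AC \simeq \BC$ or contravariant equivalence $\AC^{\op} \simeq \BC$ induces a bijection between the poset $\Serre(\AC)$ and $\Serre(\BC)$.

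First, I would verify that for every Serre subcategory $\CC \subseteq R\modl\modl$, the essential image $\DAGJ(\CC)$ is indeed a Serre subcategory of $R^{\op}\modl\modl$. Given a short exact sequence $0 \to \mathcal{G}' \to \mathcal{G} \to \mathcal{G}'' \to 0$ in $R^{\op}\modl\modl$ with $\mathcal{G} \in \DAGJ(\CC)$, I apply a quasi-inverse of $\DAGJ$ (which is also a contravariant equivalence, hence exact) to obtain an exact sequence $0 \to \mathcal{H}'' \to \mathcal{H} \to \mathcal{H}' \to 0$ in $R\modl\modl$ with $\mathcal{H} \in \CC$; closure of $\CC$ under subobjects and quotients then forces $\mathcal{H}', \mathcal{H}'' \in \CC$, hence $\mathcal{G}', \mathcal{G}'' \in \DAGJ(\CC)$. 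The symmetric argument handles closure of $\DAGJ(\CC)$ under extensions. Because $\DAGJ$ is essentially bijective on isomorphism classes of objects, the assignment $\CC \mapsto \DAGJ(\CC)$ is a bijection whose inverse is the analogous construction using a quasi-inverse of $\DAGJ$.

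Finally, I would check that this bijection preserves the frame structure described in \Cref{lemma:serre_subcategories_a_frame}. Meets in $\Serre$ are intersections; since $\DAGJ$ reflects isomorphisms, the identity $\DAGJ(\CC_1 \cap \CC_2) = \DAGJ(\CC_1) \cap \DAGJ(\CC_2)$ is immediate. For arbitrary joins, I use the concrete description of $\langle \CC_i \mid i \in I \rangle$ from Paragraph \ref{para:Serre_subcategories} as the class of objects admitting a finite filtration whose factors are isomorphic to subfactors of objects in $\bigcup_i \CC_i$. Since $\DAGJ$ is an exact contravariant equivalence, it sends such filtrations to filtrations of the image object and exchanges sub- and quotient constructions, so $\DAGJ(\langle \CC_i \rangle) = \langle \DAGJ(\CC_i) \rangle$.

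No step should present a genuine obstacle; the entire content of the corollary is that the self-duality of the Serre axioms transports the equivalence from \Cref{theorem:agj_duality} to a frame isomorphism. The only care needed is in tracking the reversal of arrows under $\DAGJ$ when verifying closure properties.
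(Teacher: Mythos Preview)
Your proposal is correct; the paper states this corollary without proof, treating it as immediate from \Cref{lemma:serre_subcategories_a_frame} and \Cref{theorem:agj_duality}, and your argument is exactly the natural unpacking of that implication. One economy you could exploit: once you observe that both $\CC \mapsto \DAGJ(\CC)$ and its inverse are inclusion-preserving, you have an order isomorphism, which automatically preserves all existing joins and meets, so the separate verifications are redundant.
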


\begin{definition}
    A \textbf{coframe} is a partially ordered set $C$ with arbitrary meets ($\bigwedge$) and finite joins ($\vee$), such that we have for all $X, Y_i \in C$, where $i$ ranges over some index set:
    \[
    X \vee \bigwedge_i Y_i = \bigwedge_i (X \vee Y_i).
    \]
\end{definition}

\begin{example}
If $F$ is a frame, then its opposite partially ordered set $F^{\op}$ is a coframe and vice versa.
\end{example}

\begin{example}
Let $X$ be a topological space and set $\mathcal{C}(X) := \{ Y \subseteq X \mid \text{$Y$ is closed} \}$.
Then $\mathcal{C}(X)$ is a coframe with partial order given by inclusion of subsets.
Taking the complement of a set yields an isomorphism of frames $\mathcal{C}(X)^{\op} \cong \mathcal{O}(X)$.
\end{example}

\begin{remark}
We note that joins in $\Serre( \AC )$ do not necessarily distribute over arbitrary meets. Thus, $\Serre( \AC )$ is not a coframe in general.
\end{remark}

\begin{para}\label{para:ziegler_spectrum}
Let $R$ be a ring.
In this paragraph, we describe a topological space $\Zg(R)$ called the \textbf{Ziegler spectrum} of $R$. A decisive property of $\Zg(R)$ is that we have an isomorphism of frames
\begin{equation}\label{equation:serre_ziegler}
\Serre( R\modl\modl ) \cong \mathcal{C}( \Zg(R) )^{\op}.    
\end{equation}
The Ziegler spectrum can be described as follows:
the points of $\Zg(R)$ are the isomorphism classes of indecomposable pure-injective modules $P \in R\Modl$, where a module is called \textbf{pure-injective} if the functor $(- \otimes_R P)$ is an injective object in the functor category of \emph{all} functors of type $\modr R \rightarrow \Ab$.
The closed subsets of $\Zg(R)$ are the vanishing loci defined by finitely presented functors, i.e., they are of the form
\[
\V( U ) := \{ P \in \Zg(R) \mid \text{$\mathcal{G}(P) \cong 0$ for all $\mathcal{G} \in U$} \}
\]
for $U \subseteq R\modl\modl$ a set of finitely presented functors. We call $\V(U)$ the \textbf{vanishing locus} of $U$.
Conversely, given a subset $X \subseteq \Zg(R)$, we obtain a Serre subcategory
\[
\mathbf{I}(X) := \{ \mathcal{G} \in R\modl\modl \mid \text{$\mathcal{G}(P) \cong 0$ for all $P \in X$} \}.
\]
Then the isomorphism of \Cref{equation:serre_ziegler} is given by the operators $\V$ and $\mathbf{I}$.
\end{para}

\begin{para}\label{para:duality_of_Irr}
The space $\Zg(R^{\op})$ can be regarded as the Ziegler spectrum of $R$ described in terms of right modules instead of left modules.
By Equation \eqref{equation:serre_ziegler} and \Cref{corollary:AGJ_on_frames}, we have
\begin{align*}
\mathcal{O}( \Zg(R) ) \cong \mathcal{C}( \Zg(R) )^{\op} &\cong \Serre( R\modl\modl )  \\
&\cong \Serre( R^{\op}\modl\modl ) \cong \mathcal{C}( \Zg(R^{\op}) )^{\op} \cong \mathcal{O}( \Zg(R^{\op}) ).
\end{align*}
It follows from Paragraph \Cref{para:from_to_top} that the sobrifications of $\Zg(R)$ and $\Zg(R^{\op})$ are homeomorphic.
Using the description of the sobrification via irreducible closed subsets from Paragraph \Cref{para:from_to_top}, the homeomorphism between the sobrifications
\[
D: \Irr( \Zg(R) ) \rightarrow \Irr( \Zg( R^{\op} ) )
\]
can be desribed as follows: let $X \in \Irr( \Zg(R) )$ be given as $X = \V( U )$ for some $U \subseteq R\modl\modl$, then $DX = \V( \DAGJ(U) ) \in \Irr( \Zg( R^{\op} ) )$.
\end{para}

\begin{remark}
It is an open question whether we have $\Zg(R) \cong \Zg(R^{\op})$ in general.
The Ziegler spectrum is not a Kolmogorov space in general (see \cite[8.2.93]{PrestPSL}).
It is even an open problem whether the Kolmogorov quotients (the identification of topologically indistinguishable points) of $\Zg(R)$ and $\Zg(R^{\op})$ are homeomorphic.
From the point of view of classifying Serre subcategories of $R\modl\modl$, the sobrification of $\Zg(R)$ (and hence of $\Zg(R^{\op})$) is the canonical topological space to look at.
\end{remark}

\begin{example}\label{example:topology_localpid}
Let $R$ be a DVR with maximal ideal $\mathfrak{p}$ (for the case of a Dedekind domain see \Cref{example:zg_dedekind}).
Then the indecomposable pure-injective $R$-modules are given as follows:
  \begin{enumerate}
    \item $F_n := R/\pid^n$, for $n \geq 1$,
    \item $A := \varprojlim_n{R/\pid^n}$, the limit of the sequence $(\dots \rightarrow R/\pid^{n+1} \rightarrow R/\pid^{n} \rightarrow \dots)$,
    \item $P := E(R/\pid)$, the injective hull of $R/\pid$,
    \item $Q := \Quot(R)$, the quotient field of $R$.
  \end{enumerate}
We describe all closed subsets of $\Zg(R)$.
Let $X_0 \subseteq \{ F_n \mid n \geq 1 \}$ and $X_1 \subseteq \{A,P,Q\}$.
Then $X := X_0 \uplus X_1$ is closed if and only if one of the following holds:
\begin{enumerate}
    \item $X_0$ is a finite set and $X_1$ is equal to one of the following sets:
    \begin{itemize}
        \item $\emptyset$
        \item $\{Q\}$
        \item $\{A,Q\}$
        \item $\{P,Q\}$
        \item $\{A,P,Q\}$
    \end{itemize}
    \item $X_0$ is an infinite set and $X_1 = \{A,P,Q\}$.
\end{enumerate}
\end{example}

\begin{example}\label{example:zg_dedekind}
  Let $R$ be a Dedekind domain.
  Then the indecomposable pure-injective $R$-modules are given as follows (see \cite[description below Remark 5.2.5]{PrestPSL}):
  \begin{enumerate}
    \item $F_{\pid,n} := R/\pid^n$ for $\pid \in \mSpec( R )$, $n \geq 1$,
    \item $A_{\pid} := \varprojlim_n{R/\pid^n}$ for $\pid \in \mSpec( R )$,
    \item $P_{\pid} := E( R/\pid )$, for $\pid \in \mSpec( R )$,
    \item $Q := \Quot(R)$.
  \end{enumerate}
We describe all closed subsets of $\Zg(R)$. For this, we note that $R_{\pid}$ is a DVR, and we can use \Cref{example:topology_localpid} to make the following identification:
\[\Zg( R_{\pid} ) \cong \{ F_{\pid,n}, A_{\pid},P_{\pid},Q \mid n \geq 1 \}.\]
Now, let $X \subseteq \Zg(R)$.
We set 
\begin{align*}
X^{\pid} &:= X \cap \Zg( R_{\pid} ) \text{, for $\pid \in \mSpec( R ),$}\\
X_0 &:= X \cap \{ F_{\pid,n} \mid \pid \in \mSpec( R ), n \geq 1\}.
\end{align*}
Then $X$ is closed if and only if all of the following statements hold:
\begin{enumerate}
    \item $X^{\pid}$ is closed in $\Zg( R_{\pid} )$ for all $\pid \in \mSpec( R )$, and
    \item if $X_0$ is infinite, then $Q \in X$.
\end{enumerate}
\end{example}

\begin{para}[Elementary duality]\label{para:reflexive}
Recall the map $D$ between the irreducible components of $\Zg(R)$ and $\Zg(R^{\op})$ of Paragraph \Cref{para:duality_of_Irr} and the unit from the adjunction described in Paragraph \Cref{para:from_to_top}:
\begin{center}
\begin{tikzpicture}[mylabel/.style={fill=white}]
      \coordinate (r) at (4,0);
      \node (A) {$\Irr(\Zg(R))$};
      \node (B) at ($(A) + (r)$) {$\Irr(\Zg(R^{\op}))$};
      \node (C) at ($(A) - (r)$) {$\Zg(R)$};
      \node (D) at ($(B) + (r)$) {$\Zg(R^{\op})$};
      \draw[->, out = 30, in = 180-30] (A) to node[mylabel]{$D$} (B);
      \draw[<-, out = -30, in = 180+30] (A) to node[mylabel]{$D$} (B);
      \node (t) at ($(A) + 0.5*(r)$) {$\cong$};
      \draw[->] (C) to node[above]{${\unit_R}$} (A);
      \draw[->] (D) to node[above]{${\unit_{R^{\op}}}$} (B);
\end{tikzpicture} 
\end{center}
In general, sending a point $P$ from the left to right of this diagram via
\[
\Zg(R) \ni P \mapsto \unit_{R^{\op}}^{-1}( \{ D( \unit_R( P ) \} ) ) \subseteq \Zg(R^{\op})
\]
does not define a function from $\Zg(R)$ to $\Zg(R^{\op})$, but only a relation.
A point $P \in \Zg(R)$ is called \textbf{reflexive} if its assigned subset $\unit_{R^{\op}}^{-1}( \{ D( \unit_R( P ) \} ) )$ is a singleton whose unique element we denote by $DP \in \Zg(R^{\op})$, and if the same holds for $DP $, i.e., $\unit_{R}^{-1}( \{ D( \unit_{R^{\op}}( DP ) \} ) )$ is a singleton (which is necessarily equal to $P$).
Hence, we end up with a bijection between reflexive points (called \textbf{elementary duality}):
\begin{center}
\begin{tikzpicture}[mylabel/.style={fill=white}]
      \coordinate (r) at (7,0);
      \node (A) {$\{ P \in \Zg(R) \mid \text{$P$ is reflexive}\}$};
      \node (B) at ($(A) + (r)$) {$\{ P \in \Zg(R^{\op}) \mid \text{$P$ is reflexive}\}$};
      \draw[->, out = 10, in = 180-10] (A) to node[mylabel]{$D$} (B);
      \draw[<-, out = -10, in = 180+10] (A) to node[mylabel]{$D$} (B);
      \node (t) at ($(A) + 0.5*(r)$) {$\cong$};
\end{tikzpicture} 
\end{center}
Moreover, for any $\mathcal{G} \in R\modl\modl$, we have
\begin{equation}\label{equation:DG}
     \mathcal{G}(P) \cong 0  \hspace{1em} \Longleftrightarrow \hspace{1em} \DAGJ(\mathcal{G})( DP ) \cong 0
\end{equation}
since
\[
\mathcal{G} \in \mathbf{I}(P) \hspace{1em} \Longleftrightarrow \hspace{1em}\DAGJ(\mathcal{G}) \in \DAGJ(\mathbf{I}(P)) \hspace{1em} \Longleftrightarrow \hspace{1em} \DAGJ(\mathcal{G}) \in \mathbf{I}(DP),
\]
where the last equivalence follows from the description of $D$ in Paragraph \Cref{para:duality_of_Irr}.
\end{para}

\begin{theorem}\label{theorem:flat_inj_duality}
Let $R$ be a left noetherian ring.
If $P \in \Zg(R)$ is an injective module, then it is reflexive (in the sense of Paragraph \Cref{para:reflexive}), and $DP \in \Zg(R^{\op})$ is a flat module.
Moreover, elementary duality gives a bijection
\begin{center}
\begin{tikzpicture}[mylabel/.style={fill=white}]
      \coordinate (r) at (7,0);
      \node (A) {$\{ P \in \Zg(R) \mid \text{$P$ is injective}\}$};
      \node (B) at ($(A) + (r)$) {$\{ P \in \Zg(R^{\op}) \mid \text{$P$ is flat}\}$};
      \draw[->, out = 10, in = 180-10] (A) to node[mylabel]{$D$} (B);
      \draw[<-, out = -10, in = 180+10] (A) to node[mylabel]{$D$} (B);
      \node (t) at ($(A) + 0.5*(r)$) {$\cong$};
\end{tikzpicture} 
\end{center}
\end{theorem}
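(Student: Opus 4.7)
The strategy is to use the Auslander--Gruson--Jensen duality $\DAGJ$ of \Cref{theorem:agj_duality} to transport injectivity on the left to flatness on the right, via the correspondence \eqref{equation:serre_ziegler} between Serre subcategories of $R\modl\modl$ and closed subsets of $\Zg(R)$. For the key computation, fix $\mathcal{G} \in R\modl\modl$ with presentation induced by some $\alpha: A \to B$ in $R\modl$; the copresentation of $\DAGJ(\mathcal{G})$ described in \Cref{definition:copresentations} yields the pair of formulas
\[
\mathcal{G}(P) \cong \cokernel(\Hom(\alpha, P)), \qquad \DAGJ(\mathcal{G})(Q) \cong \kernel(Q \otimes \alpha).
\]
Since $R$ is left noetherian, $\kernel(\alpha) \in R\modl$. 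Factoring $\alpha$ through its image and using injectivity of $P$ (respectively flatness of $Q$) gives the symmetric pair
\[
\mathcal{G}(P) \cong 0 \;\Longleftrightarrow\; \Hom(\kernel(\alpha), P) = 0, \qquad \DAGJ(\mathcal{G})(Q) \cong 0 \;\Longleftrightarrow\; Q \otimes_R \kernel(\alpha) = 0.
\]

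Next I would construct a candidate dual $DP$ for an indecomposable injective pure-injective $P \in \Zg(R)$. The class $\SC_P := \{K \in R\modl : \Hom(K, P) = 0\}$ is a Serre subcategory of $R\modl$, and by the noetherian assumption it generates a hereditary torsion theory on $R\Modl$. The Gabriel--Lambek module of quotients of $R$ with respect to this torsion theory is a ring epimorphism $R \to R_{\SC_P}$ with $R_{\SC_P}$ flat as a right $R$-module; I set $DP := R_{\SC_P}$ and verify that $DP$ is an indecomposable pure-injective with $DP \otimes_R K = 0 \Longleftrightarrow \Hom(K, P) = 0$ for all $K \in R\modl$.

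Combined with the first step, this yields $\DAGJ(\mathbf{I}(P)) = \mathbf{I}(DP)$ as Serre subcategories of $R^{\op}\modl\modl$, so by \Cref{corollary:AGJ_on_frames} the closed irreducible subset $\overline{\{P\}} \subseteq \Zg(R)$ is carried under $D$ to $\overline{\{DP\}} \subseteq \Zg(R^{\op})$, whose unique generic point is $DP$. This proves both that $P$ is reflexive and that $DP$ is flat. Running the same construction in reverse, starting from a flat indecomposable pure-injective $Q \in \Zg(R^{\op})$ and the class $\{M \in \modr R : Q \otimes M = 0\}$, produces an injective indecomposable pure-injective in $\Zg(R)$ whose dual recovers $Q$; the two constructions are mutually inverse, which yields the bijection.

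The main obstacle lies in the second step: producing $DP$ as a flat, indecomposable, pure-injective right $R$-module, and verifying that this assignment and its right-sided mirror are mutually inverse. These properties rest on torsion-theoretic machinery (Gabriel topologies, localization, and completion-style arguments for pure-injectivity) not developed in the paper, and the left noetherian hypothesis enters here in an essential way: it ensures that the hereditary torsion theory generated by $\SC_P$ is of finite type, that finitely generated and finitely presented coincide in $R\modl$, and that the resulting localization is sufficiently well behaved to extract an indecomposable pure-injective out of $R_{\SC_P}$.
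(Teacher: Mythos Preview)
The paper does not prove this theorem at all; it simply cites \cite[Corollary 9.6]{HerzogDuality}. So your attempt at a direct argument is already a different route, and the first step of your proposal---the symmetric pair of vanishing criteria
\[
\mathcal{G}(P)\cong 0 \Longleftrightarrow \Hom(\kernel\alpha,P)=0,
\qquad
\DAGJ(\mathcal{G})(Q)\cong 0 \Longleftrightarrow Q\otimes_R\kernel\alpha=0
\]
for $P$ injective and $Q$ flat---is correct and is essentially the computation underlying Herzog's proof as well.

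The second step, however, contains a genuine error rather than merely an omitted verification. Setting $DP := R_{\SC_P}$, the Gabriel ring of quotients, does not produce a point of $\Zg(R^{\op})$: already for $R=\Z$ and $P=E(\Z/p)$ one has $\SC_P=\{M\in\Z\modl : M_{(p)}=0\}$ and $R_{\SC_P}=\Z_{(p)}$, which is \emph{not} pure-injective. The actual elementary dual is the $p$-adic completion $\widehat{\Z_{(p)}}=\varprojlim \Z/p^n$, as in \Cref{example:zg_dedekind}. In this example the Serre subcategories $\kernel(\ev{\Z_{(p)}})$ and $\kernel(\ev{\widehat{\Z_{(p)}}})$ happen to coincide, so your computation of $\mathbf{I}(DP)$ is salvageable, but the identification of the \emph{point} $DP$ is not: one must pass to a pure-injective hull and then argue indecomposability, which is exactly the nontrivial content of Herzog's theorem. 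Your claim that $R\to R_{\SC_P}$ is automatically a flat ring epimorphism is also unjustified in the noncommutative case; flatness of Gabriel localizations characterizes \emph{perfect} torsion theories and does not follow from left noetherianity alone. In short, the outline is pointing in the right direction, but the proposed candidate for $DP$ is wrong, and correcting it requires precisely the machinery you flag as missing.
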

\begin{proof}
This follows from \cite[Corollary 9.6]{HerzogDuality} and the comment below that corollary.
\end{proof}

\section{The vanishing locus of the kernel of the co- and contravariant defect}\label{section:vanishing_locus_kernel_defect}

An explicit knowledge of all the points in the Ziegler spectrum is not always given (as it is in the case of Dedekind domains).
However, there are closed subsets about which we can say more.
In this subsection, we describe the points in the closed subsets
$\V( \kernel( \defect ) )$ and $\V( \kernel( \codefect ) )$ and show how to compute
\[\V( \mathcal{G} ) \cap \V( \kernel( \defect ) )\]
and
\[\V( \mathcal{G} ) \cap \V( \kernel( \codefect ) )\]
in the case of a commutative noetherian ring for $\mathcal{G} \in R\modl\modl$, see \Cref{theorem:checking_kernels_of_defects}.

\begin{lemma}\label{lemma:kerdefect_injective}
Let $R$ be a left coherent ring.
Then
\[
\V( \kernel( \defect ) ) = \{ P \in \Zg(R) \mid \text{$P$ is injective} \}.
\]
\end{lemma}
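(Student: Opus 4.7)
The plan is to describe $\kernel(\defect)$ concretely as those $\mathcal{G}$ admitting a monomorphic presentation, and then identify the condition "$\mathcal{G}(P) \cong 0$ for all such $\mathcal{G}$" as fp-injectivity of $P$, which together with pure-injectivity yields honest injectivity.

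First, I would apply the exact functor $\defect$ to an arbitrary presentation
\[
\Hom(B,-) \xrightarrow{\Hom(\alpha,-)} \Hom(A,-) \to \mathcal{G} \to 0
\]
with $\alpha \colon A \to B$ in $R\modl$. Using $\defect(\Hom(N,-)) = N$ from \Cref{para:defect} together with exactness, the image in $(R\modl)^{\op}$ unwinds to $\defect(\mathcal{G}) \cong \kernel(\alpha)$ in $R\modl$. Hence $\mathcal{G} \in \kernel(\defect)$ precisely when $\alpha$ is a monomorphism (here I use left coherence of $R$ so that $R\modl$ is abelian and monomorphisms have the expected behaviour).

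For the inclusion $\supseteq$, I would take $P$ injective and any $\mathcal{G} \in \kernel(\defect)$ presented by a mono $\alpha$; evaluating at $P$ gives an exact sequence $\Hom(B,P) \to \Hom(A,P) \to \mathcal{G}(P) \to 0$, and injectivity of $P$ makes the first map surjective, forcing $\mathcal{G}(P) \cong 0$. For the reverse inclusion, I would observe that if $P \in \V(\kernel(\defect))$, then for every monomorphism $\alpha \colon A \to B$ in $R\modl$ the functor $\cokernel(\Hom(\alpha,-))$ lies in $\kernel(\defect)$ and so vanishes at $P$; this is exactly the assertion that $\Hom(\alpha,P)$ is surjective for every mono $\alpha$ in $R\modl$, i.e., that $P$ is fp-injective.

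The main obstacle is the final step: upgrading fp-injectivity to honest injectivity using that $P$ is pure-injective. Here I would invoke Stenström's theorem identifying fp-injective modules with absolutely pure ones, so that the embedding $P \hookrightarrow E(P)$ into the injective hull is a pure embedding. Pure-injectivity of $P$ then splits this embedding, exhibiting $P$ as a direct summand of the injective module $E(P)$, hence as injective. All other steps are a direct unwinding of the definitions of $\defect$ and of the vanishing locus, so the whole weight of the proof rests on this Stenström-plus-splitting argument.
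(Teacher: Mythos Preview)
Your argument is correct and follows the same route as the paper: both reduce membership in $\V(\kernel(\defect))$ to fp-injectivity of $P$, and then combine fp-injectivity with pure-injectivity to obtain honest injectivity. The only difference is presentational: the paper cites \cite[Theorem~6.28]{PosBehI} and \cite[Lemma~4.3.12]{PrestPSL} as black boxes for these two steps, whereas you unpack them concretely via the computation $\defect(\mathcal{G}) \cong \kernel(\alpha)$ and the Stenstr\"om splitting argument.
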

\begin{proof}
Let $P \in \Zg(R)$.
Then 
\begin{align*}
\phantom{\Leftrightarrow} & \quad P \in \V( \kernel( \defect ) ) \\
\Leftrightarrow & \quad \mathbf{I}(P) \supseteq \kernel( \defect ) \\\Leftrightarrow & \quad \kernel(\ev{P}) \supseteq \kernel( \defect ) \\
\Leftrightarrow & \quad \text{$P$ is fp-injective}
\end{align*}
where the last equivalence is the statement in \cite[Theorem 6.28]{PosBehI}.
Last, a module is injective if and only if it is both fp-injective and pure-injective (see \cite[Lemma 4.3.12.]{PrestPSL}). This gives the claim.
\end{proof}

\begin{lemma}\label{lemma:Matlis_inj}
Let $R$ be a commutative noetherian ring. Then
\[
\{ P \in \Zg(R) \mid \text{$P$ is injective} \} = \{ E(R/\pid) \mid \pid \in \Spec(R) \}.
\]
\end{lemma}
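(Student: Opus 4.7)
The plan is to deduce this equality from two classical facts: first, that injectives are automatically pure-injective, and second, Matlis' structure theorem for injective modules over a commutative noetherian ring, which states that every injective module decomposes as a direct sum of modules of the form $E(R/\pid)$ with $\pid \in \Spec(R)$, and that each such $E(R/\pid)$ is indecomposable.

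For the inclusion $\supseteq$, I would argue as follows. By definition, $E(R/\pid)$ is injective. Every injective module is pure-injective: a pure monomorphism $X \hookrightarrow Y$ is in particular a monomorphism, so any morphism $X \to E$ into an injective extends to $Y \to E$. By Matlis, $E(R/\pid)$ is indecomposable. Therefore $E(R/\pid)$ represents a point of $\Zg(R)$ which is injective as a module.

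For the inclusion $\subseteq$, let $P \in \Zg(R)$ be injective, so $P$ is in particular an indecomposable injective module. Matlis' theorem supplies an isomorphism $P \cong \bigoplus_{i \in I} E(R/\pid_i)$ for some family of prime ideals $\pid_i$; since $P$ is indecomposable and each summand is nonzero, the index set $I$ must be a singleton, giving $P \cong E(R/\pid)$ for some $\pid \in \Spec(R)$.

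There is essentially no obstacle here beyond citing Matlis: the only subtlety is to make sure that the indecomposable injectives appearing on the right-hand side are genuinely pairwise non-isomorphic (so that they give distinct points of $\Zg(R)$), which is again part of the Matlis classification. In the write-up I would simply invoke the standard reference (e.g.\ \cite{PrestPSL} or Matlis' original paper) for the two bullet points and conclude the set equality.
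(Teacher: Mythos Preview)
Your proposal is correct and follows essentially the same approach as the paper: observe that injectives are pure-injective, so the left-hand side is precisely the set of isomorphism classes of indecomposable injective $R$-modules, and then invoke Matlis' classification to identify these with the $E(R/\pid)$ for $\pid \in \Spec(R)$.
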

\begin{proof}
Every injective module is pure-injective. Thus, the set on the left hand side of the equation is the set of isomorphism classes of all indecomposable injective modules.
Now, the claim follows from the classification of such injectives due to Matlis (see \cite[Proposition 3.1]{Matlis}).
\end{proof}

\begin{lemma}\label{lemma:kercodefect_flat}
Let $R$ be a right coherent ring.
Then
\[
\V( \kernel( \codefect ) ) = \{ P \in \Zg(R) \mid \text{$P$ is flat} \}.
\]
\end{lemma}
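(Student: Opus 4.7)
The plan is to mirror the proof of \Cref{lemma:kerdefect_injective} as closely as possible, exploiting the formal symmetry between the defect and the codefect. For $P \in \Zg(R)$, I would start from the chain
\[
P \in \V(\kernel(\codefect)) \iff \mathbf{I}(P) \supseteq \kernel(\codefect) \iff \kernel(\ev{P}) \supseteq \kernel(\codefect),
\]
and reduce the lemma to showing that the last inclusion is equivalent to $P$ being flat. Unlike the injective case, where the argument splits into ``fp-injective'' plus ``pure-injective $\Rightarrow$ injective,'' here flatness for left modules is a single notion, so no analogue of \cite[Lemma 4.3.12]{PrestPSL} is needed.

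The key step is to make $\kernel(\codefect)$ concrete via copresentations. By \Cref{definition:copresentations}, every $\mathcal{G} \in R\modl\modl$ fits into an exact sequence
\[
0 \longrightarrow \mathcal{G} \longrightarrow (N \otimes -) \xrightarrow{(\beta \otimes -)} (N' \otimes -)
\]
for some $\beta \colon N \to N'$ in $\modr R$; applying $\codefect$ and using exactness together with $\codefect(N \otimes -) \cong N$ gives $\codefect(\mathcal{G}) \cong \kernel(\beta)$. Hence $\mathcal{G} \in \kernel(\codefect)$ precisely when $\beta$ is a monomorphism in $\modr R$, and in that case evaluation at $P$ yields $\mathcal{G}(P) \cong \kernel(\beta \otimes_R P)$.

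With this identification, the equivalence is transparent. For ($\Leftarrow$), if $P$ is flat then $(- \otimes_R P)$ preserves monomorphisms, so $\beta \otimes_R P$ is injective for every $\beta$ above and thus $\mathcal{G}(P) \cong 0$. For ($\Rightarrow$), given any injection $\beta \colon N \hookrightarrow N'$ in $\modr R$, setting $\mathcal{G} := \kernel((\beta \otimes -))$ places $\mathcal{G}$ in $\kernel(\codefect)$; the hypothesis then forces $\kernel(\beta \otimes_R P) = \mathcal{G}(P) \cong 0$, so $\beta \otimes_R P$ is injective. Since it suffices to check exactness of $(- \otimes_R P)$ on monomorphisms between finitely presented right modules (which is where right coherence of $R$ is used, to guarantee $\modr R$ is abelian and that flatness is detected on $\modr R$), we conclude $P$ is flat.

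The only mild obstacle I anticipate is bookkeeping around the copresentation: one must be sure that the map $\beta$ extracted from a copresentation of $\mathcal{G}$ really is witnessed by $\codefect$ in the form claimed, and that \emph{every} injection in $\modr R$ arises this way via the construction $\mathcal{G} := \kernel((\beta \otimes -))$. Both are immediate from the Auslander--Gruson--Jensen duality (\Cref{theorem:agj_duality}) together with the characterization of $\codefect$ on injective objects of $R\modl\modl$ in Paragraph \Cref{para:codefect}, so no substantive work is required beyond the dualization of the previous lemma's argument.
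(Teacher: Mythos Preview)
Your argument is correct and your initial chain of equivalences matches the paper verbatim. The divergence is only at the final step: the paper simply invokes \cite[Theorem 6.42]{PosBehI} for the equivalence ``$\kernel(\ev{P}) \supseteq \kernel(\codefect) \iff P$ is flat'', whereas you unpack it directly via copresentations, identifying $\kernel(\codefect)$ with the functors $\kernel((\beta\otimes -))$ for $\beta$ a monomorphism in $\modr R$, and then testing flatness of $P$ on such $\beta$. What you gain is a self-contained proof that makes transparent exactly where right coherence is used (to ensure $\modr R$ is abelian so that flatness is detected on monos between finitely presented right modules); what the paper gains is brevity, at the cost of outsourcing the substance to \cite{PosBehI}. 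Your remark that no analogue of the ``fp-injective $+$ pure-injective $\Rightarrow$ injective'' step is needed is also correct and worth keeping: this asymmetry is precisely why the paper's proof here is one line shorter than its proof of \Cref{lemma:kerdefect_injective}.
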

\begin{proof}
Let $P \in \Zg(R)$.
Then 
\begin{align*}
\phantom{\Leftrightarrow} & \quad P \in \V( \kernel( \codefect ) ) \\
\Leftrightarrow & \quad \mathbf{I}(P) \supseteq \kernel( \codefect ) \\\Leftrightarrow & \quad \kernel(\ev{P}) \supseteq \kernel( \codefect ) \\
\Leftrightarrow & \quad \text{$P$ is flat}
\end{align*}
where the last equivalence is the statement in \cite[Theorem 6.42]{PosBehI}.
\end{proof}

\begin{lemma}\label{lemma:Matlis_flat}
Let $R$ be a commutative noetherian ring. Then
\[
\{ P \in \Zg(R) \mid \text{$P$ is flat} \} = \{ D(E(R/\pid)) \mid \pid \in \Spec(R) \}
\]
\end{lemma}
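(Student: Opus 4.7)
The plan is to combine the elementary duality theorem (\Cref{theorem:flat_inj_duality}) with Matlis's classification of indecomposable injectives over a commutative noetherian ring, exploiting the fact that $R = R^{\op}$ in the commutative setting.

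First, I would note that since $R$ is commutative noetherian, it is in particular left noetherian, and moreover $R^{\op} = R$, so $\Zg(R^{\op}) = \Zg(R)$. By \Cref{theorem:flat_inj_duality}, elementary duality then furnishes a bijection
\[
D: \{ P \in \Zg(R) \mid \text{$P$ is injective} \} \xrightarrow{\cong} \{ P \in \Zg(R) \mid \text{$P$ is flat} \}.
\]
Thus every flat point in $\Zg(R)$ is of the form $D(P)$ for some injective point $P \in \Zg(R)$, and conversely every such $D(P)$ is flat.

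Next, I would invoke \Cref{lemma:Matlis_inj}, which identifies the injective points of $\Zg(R)$ with $\{E(R/\pid) \mid \pid \in \Spec(R)\}$. Substituting this description into the bijection above yields
\[
\{ P \in \Zg(R) \mid \text{$P$ is flat} \} = \{ D(E(R/\pid)) \mid \pid \in \Spec(R) \},
\]
which is exactly the desired equality.

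There is no real obstacle here, since the lemma is essentially a direct translation: the content lies entirely in the earlier two results, and commutativity of $R$ is what allows us to read the output of elementary duality back inside $\Zg(R)$ itself rather than in $\Zg(R^{\op})$. The only minor subtlety worth flagging is the identification $\Zg(R) = \Zg(R^{\op})$ and the implicit assertion that the isomorphism class of $D(E(R/\pid))$ is well-defined as a point of $\Zg(R)$, which is immediate from the definition of reflexive points in Paragraph \Cref{para:reflexive}.
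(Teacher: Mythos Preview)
Your proposal is correct and follows exactly the same approach as the paper, which simply says the lemma follows from \Cref{lemma:Matlis_inj} and \Cref{theorem:flat_inj_duality}. Your added remark about identifying $\Zg(R^{\op})$ with $\Zg(R)$ via commutativity is a helpful clarification that the paper leaves implicit.
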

\begin{proof}
Follows from \Cref{lemma:Matlis_inj} and \Cref{theorem:flat_inj_duality}.
\end{proof}

\begin{lemma}\label{lemma:defect_prime}
Let $R$ be a coherent commutative ring, let $\pid \subseteq R$ be a prime ideal, and let $\sigma: R \rightarrow R_{\pid}$ denote the canonical localization map.
If $M \in R_{\pid}\Modl$ is an fp-injective fp-cogenerator (in the category $R_{\pid}\Modl$), then the following diagram of exact functors commutes up to natural isomorphism:
\begin{center}
  \begin{tikzpicture}[mylabel/.style={fill=white}]
    \coordinate (r) at (6,0);
    \coordinate (d) at (0,-2);
    
    \node (11) {$R\modl\modl$};
    
    \node (13) at ($(11) + 2*(r)$) {$(R\modl)^{\op}$};

    \node (21) at ($(11) + (d)$){$\frac{R\modl\modl}{\kernel( \ev{\M_{|\ringmap}} ) } = \ab{\M_{|\ringmap}}$};
    \node (22) at ($(21)+(r)$) {$\ab{M} = \frac{R_{\pid}\modl\modl}{\kernel( \ev{\M} ) }$};
    \node (23) at ($(22) + (r)$) {$(R_{\pid}\modl)^{\op}$};

    \draw[->,thick] (11) to node[above]{$\mathcal{G} \mapsto \defect( \mathcal{G})$} node[below]{$\simeq$} (13);

    \draw[->,thick] (21) to node[below]{$\ringmap_{\ast}$} node[above]{$\simeq$} (22);
    \draw[->,thick] (22) to node[below]{$\mathcal{G} \mapsto \defect( \mathcal{G})$} node[above]{$\simeq$}(23);

    \draw[->,thick] (11) to (21);
    
    \draw[->,thick] (13) to node[left]{$(R_{\pid} \otimes_R -)$}(23);
  \end{tikzpicture}
\end{center}
\end{lemma}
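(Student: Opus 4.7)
The plan is to invoke the universal property of the free abelian category from \Cref{para:up_of_Rmodmod}: both ways around the diagram are exact functors $R\modl\modl \to (R_\pid\modl)^{\op}$, so it suffices to show that their restrictions along the canonical functor $R \to R\modl\modl$ give naturally isomorphic additive functors $R \to |(R_\pid\modl)^{\op}|$, i.e., that they agree on the image of the single object and on the action of each ring element.

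First I check that the bottom row makes sense. Since $R$ is commutative and coherent, the localization $R_\pid$ is again coherent, so \Cref{para:defect} yields a contravariant defect $\defect \colon R_\pid\modl\modl \to (R_\pid\modl)^{\op}$, and because $M$ is an fp-injective fp-cogenerator in $R_\pid\Modl$, \Cref{para:defect_equiv} tells us that it descends to an equivalence $\ab{M} \xrightarrow{\simeq} (R_\pid\modl)^{\op}$. The middle arrow $\ringmap_* \colon \ab{\M_{|\ringmap}} \to \ab{M}$ is an equivalence by \Cref{para:description_of_sigma_ast}, since localization is a ring epimorphism. Pasting the commuting square of \Cref{para:description_of_sigma_ast} on top of the commuting triangle of \Cref{para:defect_equiv} (now applied over $R_\pid$), the clockwise composite along the bottom is naturally isomorphic to
\[
R\modl\modl \xrightarrow{\ringmap_{*,\modl\modl}} R_\pid\modl\modl \xrightarrow{\defect_{R_\pid}} (R_\pid\modl)^{\op},
\]
where $\ringmap_{*,\modl\modl}$ is the canonical functor of \Cref{lemma:compute_sigma_ast}. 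Call this composite $F_2$, and let $F_1 := (R_\pid \otimes_R -) \circ \defect_R$ be the counterclockwise composite.

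Both $F_1$ and $F_2$ are exact. To compare them via the universal property, I evaluate at the distinguished object $\Hom(R,-) \in R\modl\modl$ and track the action of each $r \in R$ via the endomorphism $\Hom(\cdot r, -)$. For $F_1$: $\defect(\Hom(R,-)) = R$ and $R_\pid \otimes_R R = R_\pid$; the endomorphism $\Hom(\cdot r, -)$ is sent by $\defect$ to $\cdot r \colon R \to R$ in $(R\modl)^{\op}$ and then by $R_\pid \otimes_R -$ to multiplication by $\ringmap(r)$ on $R_\pid$. For $F_2$: by \Cref{lemma:compute_sigma_ast}, $\ringmap_{*,\modl\modl}(\Hom(R,-)) \cong \Hom(R_\pid,-)$, whose defect is $R_\pid$, and $\Hom(\cdot r, -)$ is carried to $\Hom(\cdot \ringmap(r), -)$ on $\Hom(R_\pid,-)$, whose defect is multiplication by $\ringmap(r)$ on $R_\pid$. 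Hence the two restricted additive functors $R \to |(R_\pid\modl)^{\op}|$ are naturally isomorphic, so by the universal property of $R\modl\modl$ we obtain a natural isomorphism $F_1 \cong F_2$, which is exactly the claimed commutativity.

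The main bookkeeping obstacle I anticipate is tracking orientation when passing between $R\modl$ and $(R\modl)^{\op}$, and ensuring that the two 2-cells supplied by \Cref{para:description_of_sigma_ast} and \Cref{para:defect_equiv} paste coherently with the 2-cell produced by the universal property of the free abelian category. Since all functors involved are exact and the target is determined by its value on a single generator, this should reduce to a routine pasting once the composites are arranged as above.
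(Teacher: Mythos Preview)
Your proposal is correct and follows essentially the same approach as the paper: both invoke the universal property of the free abelian category (\Cref{para:up_of_Rmodmod}) to reduce commutativity to checking the value on the forgetful functor $\Hom(R,-)$, after first observing that $R_\pid$ is coherent and that the bottom row consists of equivalences via \Cref{para:defect_equiv} and \Cref{para:description_of_sigma_ast}. Your version is more explicit in tracking the $R$-action and in rewriting the lower composite as $\defect_{R_\pid}\circ\sigma_{\ast}$ before applying the universal property, but the underlying argument is the same.
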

\begin{proof}
We note that $R_{\pid}$ is a coherent ring and flat as an $R$-module. Thus, all functors are well-defined and exact, and $\sigma_{\ast}$ is an equivalence by Paragraph \Cref{para:description_of_sigma_ast}.
By the universal property of $R\modl\modl$ (see Paragraph \Cref{para:up_of_Rmodmod}), it suffices to test commutativity on the full subcategory spanned by the forgetful functor $\ff$.
Mapping $\ff$ via the upper path of the diagram yields $R_{\pid} \otimes_R \defect( \ff ) \cong R_{\pid} \otimes R \cong R_{\pid}$.
Mapping $\ff$ via the lower path of the diagram yields $\defect( \ff' ) \cong R_{\pid}$, where $\ff'$ is the forgetful functor $R_{\pid}\modl \rightarrow \Ab$. The claim follows.
\end{proof}

\begin{lemma}\label{lemma:defect_codefect_duality}
For $\mathcal{G} \in R\modl\modl$, we have
\[
\defect( \DAGJ(\mathcal{G} )) \simeq \codefect( \mathcal{G} ) \in \modr R.
\]
\end{lemma}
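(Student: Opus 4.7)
The plan is to pass through a copresentation of $\mathcal{G}$ and identify both $\defect(\DAGJ(\mathcal{G}))$ and $\codefect(\mathcal{G})$ with the kernel of the same map in $\modr R$. By \Cref{definition:copresentations}, I would fix a copresentation
\[
0 \to \mathcal{G} \to (N \otimes -) \xrightarrow{(\alpha \otimes -)} (N' \otimes -)
\]
of $\mathcal{G}$ given by $\alpha: N \to N'$ in $\modr R$.

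First I would compute $\codefect(\mathcal{G})$: since $\codefect$ is exact and sends $(N \otimes -)$ to $N$ (\Cref{para:codefect}), applying it to the copresentation yields the exact sequence $0 \to \codefect(\mathcal{G}) \to N \xrightarrow{\alpha} N'$ in $\modr R$, so $\codefect(\mathcal{G}) \cong \kernel(\alpha)$.

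Next I would compute $\defect(\DAGJ(\mathcal{G}))$. Applying the contravariant equivalence $\DAGJ$ (\Cref{theorem:agj_duality}) to the copresentation reverses the sequence and, since $\DAGJ$ is a duality exchanging projectives and injectives, sends the injective $(N \otimes -)$ to the projective $\Hom_{R^{\op}}(N, -)$ (and likewise for $N'$). The result is a presentation
\[
\Hom_{R^{\op}}(N', -) \xrightarrow{\Hom(\alpha, -)} \Hom_{R^{\op}}(N, -) \to \DAGJ(\mathcal{G}) \to 0
\]
of $\DAGJ(\mathcal{G})$ in $R^{\op}\modl\modl$, still encoded by the same morphism $\alpha$. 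Applying the exact functor $\defect$ for the ring $R^{\op}$, which sends $\Hom_{R^{\op}}(M, -)$ to $M$ (\Cref{para:defect}) and takes values in $(R^{\op}\modl)^{\op} = (\modr R)^{\op}$, and then translating back to $\modr R$ (reversing arrows) yields $0 \to \defect(\DAGJ(\mathcal{G})) \to N \xrightarrow{\alpha} N'$. Hence $\defect(\DAGJ(\mathcal{G})) \cong \kernel(\alpha)$ as well.

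Both sides thus identify with $\kernel(\alpha)$, giving the pointwise isomorphism. For naturality in $\mathcal{G}$, I would promote this agreement to a natural isomorphism via the universal property of the free abelian category (\Cref{para:up_of_Rmodmod}): both $\defect \circ \DAGJ$ and $\codefect$ are exact functors $R\modl\modl \to \modr R$ (the two variance reversals cancel), and both send the forgetful functor $\mathcal{F} = \Hom(R, -)$ to $R$ as a right $R$-module with the canonical action. The hard part will be the careful bookkeeping of variance; one must verify that composing the contravariant $\DAGJ$ with $\defect$ (which lands in the opposite of $\modr R$) produces, after translation, an exact sequence featuring the \emph{same} morphism $\alpha$ with the same direction as in the copresentation, rather than its reverse or a twisted version.
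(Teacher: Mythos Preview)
Your proposal is correct and follows essentially the same idea as the paper's proof. The paper simply writes: ``This is clear for $\mathcal{G} = (N \otimes_R -)$, where $N \in \modr R$. The general case follows from the exactness of the involved functors.'' Your argument is just an explicit unwinding of that sentence: you fix a copresentation, observe that both exact functors $\codefect$ and $\defect \circ \DAGJ$ send $(N \otimes -)$ to $N$, and then read off that both sides become $\kernel(\alpha)$. Your additional appeal to the universal property of $R\modl\modl$ for naturality is more than the paper states or needs (the lemma is only used objectwise in \Cref{theorem:checking_kernels_of_defects}), but it is not wrong.
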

\begin{proof}
This is clear for $\mathcal{G} = (N \otimes_R -)$, where $N \in \modr R$.
The general case follows from the exactness of the involved functors.
\end{proof}

\begin{theorem}\label{theorem:checking_kernels_of_defects}
Let $R$ be a noetherian commutative ring and let $\pid \subseteq R$ be a prime ideal.
We have for all $\mathcal{G} \in R\modl\modl$:
    \[
    {\mathcal{G}}(E( R/\pid) ) \cong 0 \text{\hspace{2em}$\Longleftrightarrow$ \hspace{2em}} R_{\pid} \otimes_R \defect( \mathcal{G} ) \cong 0.
    \]
In particular, \Cref{lemma:kerdefect_injective} and \Cref{lemma:Matlis_inj} imply the equality
\[
\V( \mathcal{G} ) \cap \V( \kernel( \defect ) ) = \{ E( R/\qid ) \mid \text{$R_{\qid} \otimes_R \defect( \mathcal{G} ) \cong 0$, $\qid \in \Spec(R)$} \}.
\]
Moreover, we have
    \[
    {\mathcal{G}}(D(E( R/\pid)) ) \cong 0 \text{\hspace{2em}$\Longleftrightarrow$ \hspace{2em}}  \codefect( \mathcal{G} ) \otimes_R R_{\pid} \cong 0.
    \]
In particular, \Cref{lemma:kercodefect_flat} and \Cref{lemma:Matlis_flat} imply the equality
\[
\V( \mathcal{G} ) \cap \V( \kernel( \codefect ) ) = \{ D(E( R/\qid )) \mid \text{$\codefect( \mathcal{G} ) \otimes_R R_{\qid} \cong 0$, $\qid \in \Spec(R)$}  \}.
\]
\end{theorem}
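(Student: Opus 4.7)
The two biconditionals in the theorem share a common structure: the first falls out directly from Lemma~\ref{lemma:defect_prime}, and the second reduces to the first via Auslander-Gruson-Jensen duality together with Lemma~\ref{lemma:defect_codefect_duality}. The four closed-set equalities then drop out immediately by combining the biconditionals with Lemmas~\ref{lemma:kerdefect_injective}, \ref{lemma:Matlis_inj}, \ref{lemma:kercodefect_flat}, and~\ref{lemma:Matlis_flat}.

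For the first equivalence, the plan is to apply Lemma~\ref{lemma:defect_prime} with $M := E(R/\pid)$ regarded as an $R_\pid$-module. By Matlis's theorem, $E_R(R/\pid) \cong E_{R_\pid}(R_\pid/\pid R_\pid)$ is the injective hull of the residue field of the Noetherian local ring $R_\pid$, so it is injective (hence fp-injective) and an fp-cogenerator in $R_\pid\Modl$ by the standard Matlis-duality statement for finitely presented modules over a Noetherian local ring. The hypotheses of Lemma~\ref{lemma:defect_prime} are thus verified, so the diagram there commutes up to natural isomorphism, and the two horizontal arrows in its bottom row are equivalences. The leftmost vertical map $R\modl\modl \to \ab{M_{|\ringmap}}$ is the canonical Serre-quotient functor, which sends $\mathcal{G}$ to zero exactly when $\mathcal{G}(E(R/\pid)) \cong 0$. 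Transporting along the lower path of the diagram, the image of $\mathcal{G}$ in $(R_\pid\modl)^{\op}$ is $R_\pid \otimes_R \defect(\mathcal{G})$, and the two vanishings are therefore equivalent.

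The second equivalence uses that $R$ is commutative, so $R^{\op} = R$ and $\Zg(R) = \Zg(R^{\op})$; in particular $D(E(R/\pid))$ is a point of $\Zg(R)$, which is reflexive by Theorem~\ref{theorem:flat_inj_duality}. Equation~(\ref{equation:DG}) then gives
\[
\mathcal{G}(D(E(R/\pid))) \cong 0 \ \Longleftrightarrow\ \DAGJ(\mathcal{G})(E(R/\pid)) \cong 0.
\]
Applying the first equivalence to $\DAGJ(\mathcal{G}) \in R\modl\modl$ converts the right-hand side into $R_\pid \otimes_R \defect(\DAGJ(\mathcal{G})) \cong 0$. Lemma~\ref{lemma:defect_codefect_duality} then identifies $\defect(\DAGJ(\mathcal{G}))$ with $\codefect(\mathcal{G})$ in $\modr R$, and commutativity of $R$ rewrites the tensor product as $\codefect(\mathcal{G}) \otimes_R R_\pid$.

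Once both biconditionals are in hand, the displayed equalities follow by intersecting: Lemmas~\ref{lemma:kerdefect_injective} and~\ref{lemma:Matlis_inj} identify $\V(\kernel(\defect))$ with $\{E(R/\qid) \mid \qid \in \Spec(R)\}$, so $\V(\mathcal{G}) \cap \V(\kernel(\defect))$ collects exactly those $E(R/\qid)$ with $\mathcal{G}(E(R/\qid)) \cong 0$, which by the first equivalence is the tensor condition; the second equality is obtained identically from Lemmas~\ref{lemma:kercodefect_flat} and~\ref{lemma:Matlis_flat}. The main technical input I expect to require some care is the fp-cogenerator property of $E(R/\pid)$ viewed as an $R_\pid$-module in the precise sense of Paragraph~\ref{para:defect_equiv}; this is a well-known consequence of Matlis duality, but it is the only nontrivial ingredient beyond a mechanical application of the previously established framework.
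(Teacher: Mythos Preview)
Your proposal is correct and follows essentially the same route as the paper: apply Lemma~\ref{lemma:defect_prime} with $M = E(R_\pid/\pid_\pid)$ (identified with $E(R/\pid)$ via restriction), using that $M$ is an injective cogenerator of $R_\pid\Modl$; then derive the covariant statement from the contravariant one via Equation~\eqref{equation:DG} and Lemma~\ref{lemma:defect_codefect_duality}. One small remark on wording: the formula $R_\pid \otimes_R \defect(\mathcal{G})$ is read off from the \emph{upper} path in the diagram of Lemma~\ref{lemma:defect_prime}, not the lower one---but since the diagram commutes this is immaterial.
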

\begin{proof}
Let $\sigma: R \rightarrow R_{\pid}$ be the canonical localization map.
We set $M := E( R_{\pid}/\pid_{\pid} ) \in R_{\pid}\Modl$ and note that $E(R/\pid) \cong M|_{\sigma}$ (see, e.g., \cite[Theorem 18.14 (vi)]{Matsumura}).
We note that $M$ is an injective $R_{\pid}$-module by construction.
Moreover, it is a cogenerator of $R_{\pid}\Modl$ (see, e.g., \cite[Theorem 19.10]{Lam}).
Thus, we can apply \Cref{lemma:defect_prime}:
we have ${\mathcal{G}}(E( R/\pid) ) \cong 0$ if and only if $\mathcal{G} \in \kernel( \ev{\M_{|\ringmap}} )$ if and only if
$\defect( \sigma_{\ast}(\asBeh{\mathcal{G}}{\M_{|\ringmap}} ) ) \cong 0$ (following the lower path in the diagram of \Cref{lemma:defect_prime}) if and only if 
$R_{\pid} \otimes_R \defect( \mathcal{G} ) \cong 0$ (following the upper path in the diagram of \Cref{lemma:defect_prime}).
For the statement about the covariant defect, we compute
\begin{align*}
{\mathcal{G}}(D(E( R/\pid)) ) \cong 0 \hspace{1em}&\Longleftrightarrow \hspace{1em}{(\DAGJ\mathcal{G})}(E( R/\pid) ) \cong 0 \\
&\Longleftrightarrow \hspace{1em} \defect( \DAGJ(\mathcal{G}) ) \otimes_R R_{\pid}   \cong 0 \\
&\Longleftrightarrow \hspace{1em} \codefect( \mathcal{G} ) \otimes_R R_{\pid} \cong 0,
\end{align*}
where we use Equation \eqref{equation:DG}, the corresponding statement about the contravariant defect, and \Cref{lemma:defect_codefect_duality}.
\end{proof}

\begin{remark}\label{remark:checking_kernels_of_defects}
Let $R$ be a noetherian commutative ring and let $M \in R\modl$.
Then its support can be computed via its annihilator, i.e., we have
\[
\Supp(M) := \{ \pid \in \Spec(R) \mid R_{\pid} \otimes M \ncong  0 \} = \{ \pid \in \Spec(R) \mid \pid \supseteq \Ann( M  ) \}
\]
where
\[
\Ann(M) := \{ r \in R \mid rM = 0 \}.
\]
Thus, \Cref{theorem:checking_kernels_of_defects} tells us that we have canonical bijections
\[
\V( \mathcal{G} ) \cap \V( \kernel( \defect ) ) \cong \Spec(R) \setminus \Supp( \defect( \mathcal{G} ) )
\]
and
\[
\V( \mathcal{G} ) \cap \V( \kernel( \codefect ) ) \cong \Spec(R) \setminus \Supp( \codefect( \mathcal{G} ) ).
\]
We note that if $R$ is a computable ring in the sense of \cite{BarakatLHAxiomatic}, the annihilator of a finitely presented module can be computed by means of computer algebra.
\end{remark}

\section{The Grothendieck group of \texorpdfstring{$R\modl\modl$}{R-mod-mod}}\label{section:grothendieck_group_of_free_abelian_cat}

In this section, we give a description of the Grothendieck group of $R\modl\modl$ regarded as an \emph{abelian} category.
For this, we need to start with an introduction to the Grothendieck group of an \emph{additive} category.

\begin{definition}\label{def:additive_context_additive}
Let $\AC$ be an \emph{additive} category. A map $\alpha: \obj( \AC ) \rightarrow T$, where $T$ is an abelian group, is called an \textbf{additive invariant (for $\AC$)} if for all 
$A,B,C \in \AC$ with $C \cong A \oplus B$, we have
\[\alpha(C) = \alpha(A) + \alpha(B).\]
\end{definition}

\begin{definition}
Let $\AC$ be an additive category.
Its \textbf{Grothendieck group} consists of
\begin{enumerate}
    \item an abelian group $K_0( \AC )$,
    \item an additive invariant $A \mapsto [A]$ of type $\obj(\AC) \rightarrow K_0( \AC )$
\end{enumerate}
which satisfies the following universal property: for every abelian group $T$ and additive invariant $\alpha: \obj( \AC ) \rightarrow T$, there exists a uniquely determined morphism of abelian groups $K_0( \AC ) \rightarrow T$ such that the following diagram of maps commutes:
\begin{center}
       \begin{tikzpicture}[label/.style={postaction={
        decorate,
        decoration={markings, mark=at position .5 with \node #1;}},
        mylabel/.style={thick, draw=none, align=center, minimum width=0.5cm, minimum height=0.5cm,fill=white}}]
        \coordinate (r) at (4,0);
        \coordinate (d) at (0,-2);
        \node (A) {$\obj( \AC )$};
        \node (B) at ($(A)+(r)$) {$K_0( \AC )$};
        \node (C) at ($(B) + (d)$) {$T$};
        \draw[->,thick] (A) to (B);
        \draw[->,thick] (A) to (C);
        \draw[->,thick,dashed] (B) to node[right]{$\exists!$} (C);
        \end{tikzpicture}
\end{center}
\end{definition}

\begin{para}
The question whether an additive category $\AC$ has a Grothendieck group leads to set-theoretic considerations.
If $\AC$ is a small additive category, then we can construct its Grothendieck group as the group generated by the symbols $[A]$ for $A \in \AC$ subject to the relations $[C] = [A] + [B]$ for all $A,B,C \in \AC$ with $C \cong A \oplus B$.
\end{para}

Next, we give an introduction to the Grothendieck group of an abelian category.

\begin{definition}\label{def:additive_context_abelian}
Let $\AC$ be an \emph{abelian} category. A map $\alpha: \obj( \AC ) \rightarrow T$, where $T$ is an abelian group, is called an \textbf{additive invariant (for $\AC$)} if for all short exact sequences $0 \rightarrow A \rightarrow C \rightarrow B \rightarrow 0$ in $\AC$, we have
\[\alpha(C) = \alpha(A) + \alpha(B).\]
\end{definition}

\begin{remark}
If $\AC$ is an abelian category, then we denote its underlying additive category by $|\AC|$ (see \Cref{notation:cats}).
In this way, we can resolve the clash of the names in \Cref{def:additive_context_additive} and \Cref{def:additive_context_abelian} as follows:
an additive invariant for $|\AC|$ refers to \Cref{def:additive_context_additive} while an additive invariant for $\AC$ refers to \Cref{def:additive_context_abelian}.
\end{remark}

\begin{definition}
Let $\AC$ be an abelian category.
Its \textbf{Grothendieck group} consists of
\begin{enumerate}
    \item an abelian group $K_0( \AC )$,
    \item an additive invariant $A \mapsto [A]$ of type $\obj(\AC) \rightarrow K_0( \AC )$
\end{enumerate}
which satisfies the following universal property: for every abelian group $T$ and additive invariant $\alpha: \obj( \AC ) \rightarrow T$, there exists a uniquely determined morphism of abelian groups $K_0( \AC ) \rightarrow T$ such that the following diagram commutes:
\begin{center}
       \begin{tikzpicture}[label/.style={postaction={
        decorate,
        decoration={markings, mark=at position .5 with \node #1;}},
        mylabel/.style={thick, draw=none, align=center, minimum width=0.5cm, minimum height=0.5cm,fill=white}}]
        \coordinate (r) at (4,0);
        \coordinate (d) at (0,-2);
        \node (A) {$\obj( \AC )$};
        \node (B) at ($(A)+(r)$) {$K_0( \AC )$};
        \node (C) at ($(B) + (d)$) {$T$};
        \draw[->,thick] (A) to (B);
        \draw[->,thick] (A) to (C);
        \draw[->,thick,dashed] (B) to node[right]{$\exists!$} (C);
        \end{tikzpicture}
\end{center}
\end{definition}

\begin{para}
The question whether an abelian category $\AC$ has a Grothendieck group leads to set-theoretic considerations.
If $\AC$ is a small abelian category, then we can construct its Grothendieck group as the group generated by the symbols $[A]$ for $A \in \AC$ subject to the relations $[C] = [A] + [B]$ for all short exact sequences $0 \rightarrow A \rightarrow C \rightarrow B \rightarrow 0$ in $\AC$.
\end{para}

\begin{para}
An exact functor $\AC \xrightarrow{F} \BC$ between small abelian categories gives a group morphism 
\begin{align*}
K_0(\AC) &\xrightarrow{K_0(F)} K_0(\BC) \\
[A] &\mapsto [F(A)].
\end{align*}
\end{para}

\begin{example}\label{example:length}
Let $R$ be a ring and let $M \in R\Modl$.
Recall that a composition series of $M$ is given by a chain of submodules
\[
M_0 = M \supsetneq M_1 \supsetneq \dots \supsetneq M_n = 0
\]
for some $n \geq 0$ such that each factor $M_i/M_{i+1}$ is a simple module, for $i = 0, \dots, n-1$. If $M$ has a composition series, then the number $\length_R(M) := n$ is uniquely determined and called the \textbf{length} of $M$. In that case, we say that $M$ is of finite length.
The function
\[
M \mapsto \mathrm{length}_R(M) \in \Z
\]
is additive for the full abelian subcategory $(R\Modl)_{\mathrm{fl}} \subseteq R\Modl$ spanned by modules of finite length.
Moreover, $K_0( (R\Modl)_{\mathrm{fl}} )$ is freely generated by the isomorphism classes of simple modules, see, e.g., \cite[Exercise 6.1]{WeibelKBook}.
In particular, if $R$ is a left artinian ring, a consequence of the Akizuki–Hopkins–Levitzki theorem is that 
\[
(R\Modl)_{\mathrm{fl}} = R\modl
\]
and thus $K_0( R\modl )$ is freely generated by the isomorphism classes of simple modules.
\end{example}

\begin{theorem}[Resolution theorem]\label{theorem:resolution_theorem}
Let $\AC$ be an abelian category such that every object has a projective resolution of finite length. We have an isomorphism of groups
\begin{align*}
    K_0( \Proj( \AC ) ) &\xrightarrow{\cong} K_0( \AC ) \\
    [P] &\mapsto [P]
\end{align*}
The inverse of this isomorphism maps $[A] \in K_0(\AC)$ to its Euler characteristic
\[
\sum_{i = 0}^n (-1)^i[P_i] \in K_0( \Proj( \AC ) ),
\]
where
  \begin{center}
  \begin{tikzpicture}[mylabel/.style={fill=white}]
      \coordinate (r) at (2,0);
      \node (A) {$0$};
      \node (B) at ($(A)+(r)$) {$A$};
      \node (C) at ($(B) + (r)$) {$P_0$};
      \node (D) at ($(C) + (r)$) {$P_1$};
      \node (E) at ($(D) + (r)$) {$\dots$};
      \node (F) at ($(E) + (r)$) {$P_n$};
      \draw[->,thick] (F) to (E);
      \draw[->,thick] (E) to (D);
      \draw[->,thick] (D) to (C);
      \draw[->,thick] (C) to (B);
      \draw[->,thick] (B) to (A);
  \end{tikzpicture}
\end{center}
is a projective resolution of length $n \geq 0$.
\end{theorem}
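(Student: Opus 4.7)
The plan is to construct mutually inverse homomorphisms between $K_0(\Proj(\AC))$ and $K_0(\AC)$. The forward map $\iota: K_0(\Proj(\AC)) \to K_0(\AC)$ sending $[P] \mapsto [P]$ is well-defined because any direct sum $P \oplus Q$ in $\Proj(\AC)$ sits in the split short exact sequence $0 \to P \to P \oplus Q \to Q \to 0$ in $\AC$, so the defining relation of $K_0(\Proj(\AC))$ is respected by the abelian-category relations of $K_0(\AC)$. Note that the hypothesis forces $\AC$ to have enough projectives (via $P_0$), which legitimizes the standard homological constructions below.

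For the inverse, I define $\chi(A) := \sum_{i=0}^n (-1)^i [P_i] \in K_0(\Proj(\AC))$ for a chosen finite projective resolution $0 \to P_n \to \cdots \to P_0 \to A \to 0$, and must verify two things: independence of the choice of resolution, and additivity on short exact sequences (so that $\chi$ factors through $K_0(\AC)$). For independence, I would apply Schanuel's lemma inductively: if $0 \to K \to P_0 \to A \to 0$ and $0 \to K' \to P'_0 \to A \to 0$ are two starts of resolutions, then $K \oplus P'_0 \cong K' \oplus P_0$ in $\Proj(\AC)$, which matches the degree-zero contributions up to an element of $K_0(\Proj(\AC))$ that telescopes into the next degree; iterating and using that both resolutions terminate in bounded degree gives equality of the alternating sums. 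For additivity on $0 \to A' \to A \to A'' \to 0$, I would invoke the horseshoe lemma to produce a projective resolution of $A$ of the form $P_i \cong P'_i \oplus P''_i$ from chosen finite resolutions of $A'$ and $A''$, so that $[P_i] = [P'_i] + [P''_i]$ in $K_0(\Proj(\AC))$, and then sum with signs to conclude $\chi(A) = \chi(A') + \chi(A'')$.

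Finally, that the two maps are mutually inverse is essentially formal. The composition $\chi \circ \iota$ sends $[P]$ to $[P]$ because a projective is its own resolution of length zero. For $\iota \circ \chi$, I break a resolution $0 \to P_n \to \cdots \to P_0 \to A \to 0$ into short exact sequences $0 \to Z_{i+1} \to P_i \to Z_i \to 0$ with $Z_0 = A$ and $Z_{n+1} = 0$, read off $[P_i] = [Z_i] + [Z_{i+1}]$ in $K_0(\AC)$, and telescope the alternating sum to recover $[A]$. The main obstacle I expect is the well-definedness of $\chi$, in particular proving independence of the chosen resolution inside $K_0(\Proj(\AC))$ rather than inside $K_0(\AC)$, where it is immediate; this is the step that genuinely needs a cancellation statement about projectives, supplied either by Schanuel's lemma or by a mapping-cone argument applied to a comparison chain map between resolutions.
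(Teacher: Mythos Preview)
Your proof is correct and follows the standard elementary argument. The paper, however, does not prove the theorem at all: it simply attributes the result to Grothendieck and cites \cite[Theorem VIII.4.2]{BassKTheory} for the proof (in the more general setting of exact subcategories $\CC_0 \subseteq \CC \subseteq \AC$, specialized to $\CC_0 = \Proj(\AC)$ and $\CC = \AC$). So rather than a different route, you have supplied a proof where the paper gives only a reference; what you wrote is essentially the content one would find behind that citation.

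One small point of phrasing to tighten: in your Schanuel step you write ``$K \oplus P'_0 \cong K' \oplus P_0$ in $\Proj(\AC)$'', but at that stage $K$ and $K'$ are only syzygies, not projectives, so the isomorphism lives in $\AC$. The argument still goes through exactly as you indicate: iterating Schanuel up to the maximum of the two resolution lengths yields an isomorphism between genuinely projective objects (alternating direct sums of the $P_i$ and $P'_j$), from which the equality of Euler characteristics in $K_0(\Proj(\AC))$ is read off. Alternatively, the mapping-cone variant you mention handles this in one stroke and avoids the bookkeeping. Either way, there is no real gap.
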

\begin{proof}
This resolution theorem goes back to Grothendieck. Its proof can be found in \cite[Theorem VIII.4.2]{BassKTheory}, where it is written in the more general case for suitable exact subcategories $\CC_0 \subseteq \CC \subseteq \AC$, which we can specialize to our case of interest by setting $\CC_0 := \Proj( \AC )$ and $\CC := \AC$.

\end{proof}

\begin{definition}\label{definition:lmc_rmc}
Let $R$ be a ring and let $\mathcal{G} \in R\modl\modl$ have a presentation given by $M \xrightarrow{\alpha} N$ in $R\modl$.
We define
\[
\lmc( \mathcal{G} ) := [M] - [N] + [\cokernel(\alpha)] \in K_0( |R\modl| )
\]
and call this element the \textbf{left module class} of $\mathcal{G}$.
Dually, let $\mathcal{G}$ have a copresentation given by $M' \xrightarrow{\beta} N'$ in $\modr R$ (see \Cref{definition:copresentations}).
We define
\[
\rmc( \mathcal{G} ) := [M'] - [N'] + [\cokernel(\beta)] \in K_0( |\modr R| )
\]
and call this element the \textbf{right module class} of $\mathcal{G}$.
\end{definition}

\begin{theorem}\label{theorem:K_0_fpfunctors}
Let $R$ be a ring. Then we have isomorphisms of groups
\begin{align*}
    K_0( R\modl\modl ) &\cong K_0( |R\modl| ) \\
    [\mathcal{G}] &\mapsto \lmc( \mathcal{G} )\\
    [ \Hom( M, - ) ] &\mapsfrom [M]
\end{align*}
and
\begin{align*}
    K_0( R\modl\modl ) &\cong K_0( |\modr R| ) \\
    [\mathcal{G}] &\mapsto \rmc(\mathcal{G})\\
    [(M\otimes -)] &\mapsfrom [M]
\end{align*}
\end{theorem}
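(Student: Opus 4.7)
The plan is to invoke Grothendieck's Resolution theorem (Theorem \ref{theorem:resolution_theorem}) twice: once directly to establish the isomorphism $K_0(R\modl\modl) \cong K_0(|R\modl|)$, and a second time, applied to the opposite ring $R^{\op}$ and combined with Auslander-Gruson-Jensen duality (Theorem \ref{theorem:agj_duality}), to obtain $K_0(R\modl\modl) \cong K_0(|\modr R|)$.

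For the first isomorphism, I would start by noting that Lemma \ref{lemma:projectives_length2} guarantees every object of $R\modl\modl$ admits a projective resolution of length $2$, so the hypothesis of the Resolution theorem holds and yields an isomorphism $K_0(\Proj(R\modl\modl)) \xrightarrow{\cong} K_0(R\modl\modl)$ whose inverse is given by the Euler characteristic. Next, I would use the equivalence of additive categories $(R\modl)^{\op} \xrightarrow{\simeq} \Proj(R\modl\modl)$, $M \mapsto \Hom(M,-)$, from Lemma \ref{lemma:projectives_injectives_in_Rmodmod}, together with the elementary observation that $K_0(|\AC^{\op}|) = K_0(|\AC|)$ for any additive category $\AC$ (since the defining direct-sum relations are self-dual), to identify $K_0(\Proj(R\modl\modl))$ with $K_0(|R\modl|)$. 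To verify the explicit formula, I would apply the Euler characteristic map to the length-$2$ projective resolution from Lemma \ref{lemma:projectives_length2} and read off $[M] - [N] + [\cokernel(\alpha)] = \lmc(\mathcal{G})$.

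For the second isomorphism, I would apply the first isomorphism to $R^{\op}$ to obtain $K_0(R^{\op}\modl\modl) \cong K_0(|R^{\op}\modl|) = K_0(|\modr R|)$, and then invoke Auslander-Gruson-Jensen duality to get $K_0(R\modl\modl) = K_0((R\modl\modl)^{\op}) \cong K_0(R^{\op}\modl\modl)$; the first equality holds because a short exact sequence and its reversal in the opposite category impose the same relation in $K_0$. To verify the formula, I would chase a copresentation $0 \to \mathcal{G} \to (M'\otimes-) \to (N'\otimes-)$ of $\mathcal{G}$, induced by $\beta : M' \to N'$ in $\modr R$, through $\DAGJ$: since $\DAGJ$ is a contravariant equivalence swapping projectives and injectives, it produces a presentation $\Hom_{R^{\op}}(N',-) \to \Hom_{R^{\op}}(M',-) \to \DAGJ\mathcal{G} \to 0$ whose presenting morphism corresponds by Yoneda to $\beta$, whence the first isomorphism applied in $R^{\op}\modl\modl$ sends $[\DAGJ\mathcal{G}]$ to $[M'] - [N'] + [\cokernel(\beta)] = \rmc(\mathcal{G})$.

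The main obstacle, as I foresee it, is the symmetric identification $\DAGJ(M\otimes-) \cong \Hom_{R^{\op}}(M,-)$ for $M \in \modr R$, which is needed to confirm that the inverse map sends $[M]$ to $[(M\otimes-)]$; while Theorem \ref{theorem:agj_duality} only specifies $\DAGJ$ on hom functors of left modules, this dual behavior should follow from the fact that $\DAGJ$ is an equivalence interchanging the (idempotent-complete) subcategories of projectives and injectives, together with the universal property of the free abelian category applied on the $R^{\op}$ side. Beyond this, some care is required in keeping track of the source-versus-target sign convention in $\lmc$ and $\rmc$ and in the direction of the presenting morphism after passing through $\DAGJ$.
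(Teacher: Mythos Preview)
Your proposal is correct and follows essentially the same route as the paper: apply the Resolution theorem together with the identification $\Proj(R\modl\modl)\simeq(R\modl)^{\op}$ and the length-$2$ resolution of \Cref{lemma:projectives_length2} for the first isomorphism, and then deduce the second via Auslander--Gruson--Jensen duality. Your verification of the explicit formulas and your remark on the identification $\DAGJ(M\otimes-)\cong\Hom_{R^{\op}}(M,-)$ are more detailed than the paper's terse ``obtained by the Auslander--Gruson--Jensen duality'', but there is no substantive difference in strategy; the point you flag as an obstacle is indeed handled by applying \Cref{theorem:agj_duality} with $R$ replaced by $R^{\op}$ and using that $\DAGJ$ is an equivalence.
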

\begin{proof}
For the first isomorphism, by \Cref{lemma:projectives_injectives_in_Rmodmod}, we have $\Proj( R\modl\modl ) \simeq (R\modl)^{\op}$.
By \Cref{lemma:projectives_length2}, every object in $R\modl\modl$ has a finite projective resolution. Thus, we can apply \Cref{theorem:resolution_theorem} and obtain:
\begin{align*}
    K_0( R\modl\modl ) &\cong K_0( \Proj( R\modl\modl ) ) \\
    &\cong K_0( |(R\modl)^{\op}| ) \cong K_0( |(R\modl)| ).
\end{align*}
Mapping an element $[\mathcal{G}] \in K_0( R\modl\modl )$ via these isomorphisms yields the desired formula.
The second isomorphism is obtained by the Auslander-Gruson-Jensen duality.
\end{proof}

\begin{remark}
In particular, \Cref{theorem:K_0_fpfunctors} implies that the left and right module classes of a finitely presented functor are independent of the chosen presentation and copresentation.
\end{remark}

The left and right module classes are compatible with the defects in the following sense:

\begin{lemma}\label{lemma:lmc_defect_rcm_codefect}
Let $R$ be ring and let $\mathcal{G} \in R\modl\modl$.
If $R$ is left coherent, then the following diagram commutes:
\begin{center}
  \begin{tikzpicture}[mylabel/.style={fill=white}]
    \coordinate (r) at (4,0);
    \coordinate (d) at (0,-2);
    \node (A) {$K_0( | R\modl | )$};
    \node (B) at ($(A) + 2*(r)$) {$K_0( R\modl\modl )$};
    \node (C) at ($(A) + (d) + (r)$) {$K_0( (R\modl)^{\op} )$};
    \draw[<-] (A) to node[above]{$\lmc(\mathcal{G}) \mapsfrom [\mathcal{G}]$}(B);
    \draw[->] (A) to node[left,xshift=-1em]{$[M] \mapsto [M]$}(C);
    \draw[->] (B) to node[right,xshift=1em]{$K_0(\defect)$}(C);
  \end{tikzpicture} 
\end{center}
Dually, if $R$ is right coherent, then the following diagram commutes:
\begin{center}
  \begin{tikzpicture}[mylabel/.style={fill=white}]
    \coordinate (r) at (4,0);
    \coordinate (d) at (0,-2);
    \node (A) {$K_0( | \modr R | )$};
    \node (B) at ($(A) + 2*(r)$) {$K_0( R\modl\modl )$};
    \node (C) at ($(A) + (d) + (r)$) {$K_0( \modr R )$};
    \draw[<-] (A) to node[above]{$\rmc(\mathcal{G}) \mapsfrom [ \mathcal{G} ]$}(B);
    \draw[->] (A) to node[left,xshift=-1em]{$[N] \mapsto [N]$}(C);
    \draw[->] (B) to node[right,xshift=1em]{$K_0( \codefect )$}(C);
  \end{tikzpicture} 
\end{center}
\end{lemma}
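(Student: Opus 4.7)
The plan is to verify the commutativity of the first triangle by chasing a generator $[\mathcal{G}] \in K_0(R\modl\modl)$ around both paths, and then deduce the second triangle by Auslander-Gruson-Jensen duality (or, equivalently, repeat the argument on the other side). Since both composites are group homomorphisms, it suffices to test equality on the classes $[\mathcal{G}]$.

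Fix a presentation $\alpha: M \rightarrow N$ of $\mathcal{G}$. By \Cref{lemma:projectives_length2}, we have the projective resolution
\[
0 \rightarrow \Hom(\cokernel(\alpha),-) \rightarrow \Hom(N,-) \rightarrow \Hom(M,-) \rightarrow \mathcal{G} \rightarrow 0
\]
in $R\modl\modl$ (which is available since $R$ is left coherent, so $\cokernel(\alpha) \in R\modl$). Splitting this exact sequence into short exact sequences and using additivity in $K_0(R\modl\modl)$ gives
\[
[\mathcal{G}] = [\Hom(M,-)] - [\Hom(N,-)] + [\Hom(\cokernel(\alpha),-)].
\]
Since $\defect$ is exact, $K_0(\defect)$ is a well-defined group homomorphism, and by \Cref{para:defect} it sends $[\Hom(L,-)]$ to $[L] \in K_0((R\modl)^{\op})$ for every $L \in R\modl$. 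Applying $K_0(\defect)$ therefore yields
\[
K_0(\defect)([\mathcal{G}]) = [M] - [N] + [\cokernel(\alpha)]
\]
in $K_0((R\modl)^{\op})$. This is precisely the image of $\lmc(\mathcal{G}) \in K_0(|R\modl|)$ under the canonical map $[L] \mapsto [L]$ (which is well-defined because a direct sum decomposition in $|R\modl|$ is also a short exact sequence in $(R\modl)^{\op}$). This establishes the first triangle.

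For the second triangle, one option is to rerun the same argument using a copresentation $\beta: M' \rightarrow N'$ of $\mathcal{G}$ (see \Cref{definition:copresentations}) and the injective resolution dual to \Cref{lemma:projectives_length2}, obtained either directly or from \Cref{lemma:proj_res_of_tensor_functor}; then $K_0(\codefect)$ sends $[(L \otimes -)]$ to $[L] \in K_0(\modr R)$ by \Cref{para:codefect}, and the computation is formally identical. Alternatively, one can deduce the dual statement from the first by applying the Auslander-Gruson-Jensen equivalence $\DAGJ$ of \Cref{theorem:agj_duality}, together with \Cref{lemma:defect_codefect_duality} which identifies $\defect \circ \DAGJ \simeq \codefect$. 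I expect no genuine obstacle: the only point to watch is that the formal expression $[M] - [N] + [\cokernel(\alpha)]$ is being read in different Grothendieck groups on the two sides of the triangle, so one must verify that the canonical comparison map $K_0(|R\modl|) \rightarrow K_0((R\modl)^{\op})$ is well-defined and sends the representative to the representative, which is immediate from the defining relations.
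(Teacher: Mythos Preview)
Your argument is correct and rests on the same single computation the paper uses, namely $\defect(\Hom(L,-)) \cong L$ (and dually $\codefect((N\otimes-)) \cong N$); you simply unwind the identity on an arbitrary $[\mathcal{G}]$ via its length-$2$ projective resolution, whereas the paper checks it on the generators $[\Hom(M,-)]$ and leaves the rest implicit. The two are equivalent because, by \Cref{theorem:K_0_fpfunctors}, those classes already generate $K_0(R\modl\modl)$.
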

\begin{proof}
We have $\defect( \Hom(M,-) ) \cong M$ for $M \in R\modl$ and $\codefect( (N \otimes -) ) \cong N$ for $N \in \modr R$. The claim follows.
\end{proof}

The following result explains how to convert a left module class into a right module class and vice versa.

\begin{corollary}\label{corollary:K_0_of_left_and_right_modules}
Let $R$ be a ring. 
Combining the two isomorphisms from \Cref{theorem:K_0_fpfunctors} yields an isomorphism of groups
\begin{align*}
    K_0( |R \modl| ) \cong K_0( |\modr R| )
\end{align*}
which is given as follows: for $M \in R\modl$ 
let
\begin{center}
  \begin{tikzpicture}[mylabel/.style={fill=white}]
      \coordinate (r) at (3,0);
      
      \node (A) {$Q$};
      \node (B) at ($(A)+(r)$) {$P$};
      \node (C) at ($(B) + (r)$) {$M$};
      \node (D) at ($(C) + (r)$) {$0$};
      
      \draw[->,thick] (A) to (B);
      \draw[->,thick] (B) to (C);
      \draw[->,thick] (C) to (D);
  \end{tikzpicture}
\end{center}
be an exact sequence with $P,Q$ finitely presented projective $R$-modules.
Then this isomorphism sends $[M]$ to 
\[
[\Tr(M)] - [Q^{\ast}] + [P^{\ast}]
\]
where $\Tr(M)$ is the Auslander transpose of $M$ (see Paragraph \Cref{para:Auslander_transpose}).
\end{corollary}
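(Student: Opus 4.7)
The plan is to unwind the composite isomorphism $K_0(|R\modl|) \xrightarrow{\cong} K_0(R\modl\modl) \xrightarrow{\cong} K_0(|\modr R|)$ explicitly on a generator of the form $[M]$. By \Cref{theorem:K_0_fpfunctors}, the inverse of the first isomorphism sends $[M]$ to $[\Hom(M,-)] \in K_0(R\modl\modl)$, and the second isomorphism sends this class to $\rmc(\Hom(M,-))$. So everything reduces to extracting a copresentation of $\Hom(M,-)$ in the sense of \Cref{definition:copresentations} and reading off its right module class.

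The key observation is that \Cref{lemma:proj_res_of_tensor_functor}, applied to the chosen projective presentation $Q \to P \to M \to 0$, already provides such a copresentation. Indeed, the four-term exact sequence
\[
0 \to \Hom(M,-) \to (P^{\ast} \otimes -) \to (Q^{\ast} \otimes -) \to (\Tr(M) \otimes -) \to 0
\]
truncates, after removing the rightmost term, to a copresentation of $\Hom(M,-)$ given by the morphism $\beta \colon P^{\ast} \to Q^{\ast}$ obtained from $Q \to P$ via $\Hom_R(-,R)$. By the very definition of the Auslander transpose (\Cref{para:Auslander_transpose}), the cokernel of $\beta$ in $\modr R$ is $\Tr(M)$.

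With this copresentation in hand, \Cref{definition:lmc_rmc} gives directly
\[
\rmc(\Hom(M,-)) = [P^{\ast}] - [Q^{\ast}] + [\Tr(M)] \in K_0(|\modr R|),
\]
which is the claimed formula. There is no real obstacle here: independence of the chosen resolution $Q \to P \to M \to 0$ is already packaged inside the well-definedness of $\rmc$ via \Cref{theorem:K_0_fpfunctors}, so no separate verification is needed. The only subtlety worth highlighting is that the exact sequence of \Cref{lemma:proj_res_of_tensor_functor} is being used twice at once: it is simultaneously the injective resolution of $\Hom(M,-)$ that encodes the right module class, and (after its opposite truncation) the projective resolution of $(\Tr(M)\otimes -)$ whose Euler characteristic pairs up with $[M]$ under Auslander--Gruson--Jensen duality.
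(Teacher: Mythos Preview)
Your proof is correct and follows essentially the same route as the paper: both rely on \Cref{lemma:proj_res_of_tensor_functor} to obtain a copresentation of $\Hom(M,-)$ and then read off the right module class. The paper's proof is a single sentence invoking that lemma, whereas you spell out the steps explicitly, but the content is the same.
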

\begin{proof}
We use the injective resolution of the hom functor in \Cref{lemma:proj_res_of_tensor_functor}.
\end{proof}

\begin{example}\label{example:conversion_of_projectives}
The isomorphism from \Cref{corollary:K_0_of_left_and_right_modules} maps $[P]$ to $[P^{\ast}]$ for $P \in \Proj( R\modl )$.
In particular, we see that even in the case when $R$ is commutative, the isomorphism from \Cref{corollary:K_0_of_left_and_right_modules} is not necessarily given by simply regarding the class of a left module as a right module.
\end{example}

\begin{remark}
In general, the categories $R\modl$ and $\modr R$ are neither equivalent nor antiequivalent. Hence, the group isomorphism in \Cref{corollary:K_0_of_left_and_right_modules} cannot be lifted to an (anti)equivalence of categories in a naive way. However, we can give a functorial explanation of this isomorphism using the language of triangulated categories, see \Cref{lemma:triangulated}.
\end{remark}

\begin{para}\label{para:triangulated}
We shortly explain the notions relevant for the following \Cref{lemma:triangulated}.
For an additive category $\AC$, we denote by $\Kb( \AC )$ the homotopy category of the category of bounded cochain complexes of $\AC$.
It is a triangulated category.
Moreover, if $\AC$ is an abelian category, we denote by $\Db( \AC )$ its bounded derived category. It also is a triangulated category.
Recall that if every object in $\AC$ has a finite projective resolution, then we have an exact equivalence of triangulated categories $\Db( \AC ) \simeq \Kb( \Proj( \AC ) )$. Dually, if every object in $\AC$ has a finite injective resolution, then we have an exact equivalence of triangulated categories $\Db( \AC ) \simeq \Kb( \Inj( \AC ) )$.

For $\TC$ a small triangulated category we have a corresponding notion of its Grothendieck group $K_0(\TC)$: it is the group spanned by the isomorphism classes $[T]$ of objects in $\TC$ modulo the relations $[T'] + [T''] = [T]$ for each distinguished triangle $T' \rightarrow T \rightarrow T''$ in $\TC$.
If $\AC$ is an additive category, then we have an isomorphism of groups
\[
K_0( \AC ) \cong K_0( \Kb( \AC ) )
\]
whose two directions are given by taking the Euler characteristic of a complex and by regarding an object of $\AC$ as a complex concentrated in degree $0$, see, e.g., \cite{Rose}.
\end{para}

\begin{lemma}\label{lemma:triangulated}
Let $R$ be a ring.
We have an exact equivalence of triangulated categories:
\begin{align*}
\Kb( R \modl )^{\op} \simeq \Kb( \modr R ).
\end{align*}
Concretely, for $M \in R\modl$ let
\begin{center}
  \begin{tikzpicture}[mylabel/.style={fill=white}]
      \coordinate (r) at (3,0);
      
      \node (A) {$Q$};
      \node (B) at ($(A)+(r)$) {$P$};
      \node (C) at ($(B) + (r)$) {$M$};
      \node (D) at ($(C) + (r)$) {$0$};
      
      \draw[->,thick] (A) to (B);
      \draw[->,thick] (B) to (C);
      \draw[->,thick] (C) to (D);
  \end{tikzpicture}
\end{center}
be an exact sequence with $P,Q$ finitely presented projective $R$-modules.
Then this equivalence sends $M$ (regarded as a cochain complex concentrated in degree $0$) to the cochain complex
\begin{center}
  \begin{tikzpicture}[mylabel/.style={fill=white}]
      \coordinate (r) at (2.5,0);
      
      \node (A) {$\dots$};
      \node (B) at ($(A)+(r)$) {$0$};
      \node (C) at ($(B) + (r)$) {$P^{\ast}$};
      \node (D) at ($(C) + (r)$) {$Q^{\ast}$};
      \node (E) at ($(D) + (r)$) {$\Tr(M)$};
      \node (F) at ($(E) + (r)$) {$0$};
      \node (G) at ($(F) + (r)$) {$\dots$};
      
      \draw[->,thick] (A) to (B);
      \draw[->,thick] (B) to (C);
      \draw[->,thick] (C) to (D);
      \draw[->,thick] (D) to  (E);
      \draw[->,thick] (E) to (F);
      \draw[->,thick] (F) to (G);
  \end{tikzpicture}
\end{center}
concentrated in degrees $0$, $1$, and $2$, where $\Tr(M)$ is the Auslander transpose of $M$ (in degree $2$).
In particular, the corresponding isomorphism of Grothendieck groups
\begin{align*}
K_0( |R \modl| ) \cong K_0(\Kb( R \modl )^{\op} ) \cong K_0(\Kb( \modr R )) \cong K_0( |\modr R| )
\end{align*}
can be identified with the isomorphism of \Cref{corollary:K_0_of_left_and_right_modules}.
\end{lemma}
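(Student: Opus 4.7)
The plan is to realize the desired equivalence as a composition of four standard equivalences going through the bounded derived category $\Db(R\modl\modl)$. First I would invoke \Cref{lemma:projectives_injectives_in_Rmodmod} to identify $\Proj(R\modl\modl) \simeq (R\modl)^{\op}$ and $\Inj(R\modl\modl) \simeq \modr R$ as additive categories, which promote to triangulated equivalences $\Kb(\Proj(R\modl\modl)) \simeq \Kb(R\modl)^{\op}$ and $\Kb(\Inj(R\modl\modl)) \simeq \Kb(\modr R)$. Second, by \Cref{lemma:projectives_length2} every object of $R\modl\modl$ has a projective resolution of length at most $2$, and applying $\DAGJ$ to the analogous statement for $R^{\op}\modl\modl$ every object of $R\modl\modl$ also has an injective coresolution of length at most $2$. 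By \Cref{para:triangulated}, both inclusions $\Kb(\Proj(R\modl\modl)) \hookrightarrow \Db(R\modl\modl)$ and $\Kb(\Inj(R\modl\modl)) \hookrightarrow \Db(R\modl\modl)$ are then exact equivalences of triangulated categories, and I can compose the four equivalences to obtain
\[
\Kb(R\modl)^{\op} \simeq \Kb(\Proj(R\modl\modl)) \simeq \Db(R\modl\modl) \simeq \Kb(\Inj(R\modl\modl)) \simeq \Kb(\modr R).
\]

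To verify the explicit description on $M \in R\modl$ regarded as a cochain complex in degree $0$, I would trace it through the composite. Under the first equivalence $M$ maps to $\Hom(M,-)$ concentrated in degree $0$. Inside $\Db(R\modl\modl)$ this object is isomorphic to any of its injective coresolutions, and such a coresolution is provided by \Cref{lemma:proj_res_of_tensor_functor}:
\[
0 \to \Hom(M,-) \to (P^{\ast} \otimes -) \to (Q^{\ast} \otimes -) \to (\Tr(M) \otimes -) \to 0.
\]
Hence under $\Db(R\modl\modl) \simeq \Kb(\Inj(R\modl\modl))$ the object becomes the complex $(P^{\ast} \otimes -) \to (Q^{\ast} \otimes -) \to (\Tr(M) \otimes -)$ in degrees $0,1,2$, and applying the inverse of $N \mapsto (N \otimes -)$ from \Cref{lemma:projectives_injectives_in_Rmodmod} yields the complex $P^{\ast} \to Q^{\ast} \to \Tr(M)$ in degrees $0,1,2$, as claimed.

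For the Grothendieck group statement, I would invoke the isomorphisms $K_0(|\AC|) \cong K_0(\Kb(\AC))$ recalled in \Cref{para:triangulated}: the composite
\[
K_0(|R\modl|) \cong K_0(\Kb(R\modl)^{\op}) \cong K_0(\Kb(\modr R)) \cong K_0(|\modr R|)
\]
sends $[M]$ first to the class of $M$ concentrated in degree $0$, then across the triangulated equivalence to the class of the complex above, and finally to its Euler characteristic $[P^{\ast}] - [Q^{\ast}] + [\Tr(M)] \in K_0(|\modr R|)$, which is exactly the formula of \Cref{corollary:K_0_of_left_and_right_modules}. The main bookkeeping obstacle I expect is the consistent handling of the opposite convention, namely the canonical identification $\Kb(\AC^{\op}) \simeq \Kb(\AC)^{\op}$ of triangulated categories and the fact that it induces the identity on $K_0$ (distinguished triangles in $\TC^{\op}$ give the same relations on classes as in $\TC$); beyond this, no further calculation is needed.
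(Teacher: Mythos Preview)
Your proposal is correct and follows essentially the same route as the paper: the same chain of equivalences through $\Db(R\modl\modl)$ via \Cref{lemma:projectives_injectives_in_Rmodmod} and Paragraph~\ref{para:triangulated}, the same use of \Cref{lemma:proj_res_of_tensor_functor} for the explicit formula, and the same remark about $\Kb(\AC^{\op}) \simeq \Kb(\AC)^{\op}$. Your version is slightly more explicit in tracing $M$ through the composite and in spelling out the Euler characteristic at the end, but there is no substantive difference.
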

\begin{proof}
We have the following exact equivalences of triangulated categories:
\begin{align*}
\Kb( (R \modl)^{\op} ) &\simeq \Kb( \Proj(R\modl\modl ) )\\
&\simeq \Db( R\modl\modl ) \\
&\simeq \Kb( \Inj(R\modl\modl ) ) \simeq \Kb( \modr R ).
\end{align*}
The first and the last equivalence follow from \Cref{lemma:projectives_injectives_in_Rmodmod}.
The other equivalences follow from Paragraph \Cref{para:triangulated}.
Moreover, for every additive category $\AC$, we have $\Kb( \AC^{\op} ) \simeq \Kb( \AC )^{\op}$.
The concrete formula for the equivalence follows from the injective resolution of the hom functor described in \Cref{lemma:proj_res_of_tensor_functor}.
\end{proof}

\section{Additive invariants for the detection of being zero}\label{section:additive_invariants_for_the_detection}

In this section, we show that in some cases, there are additive invariants which can detect whether a finitely presented functor vanishes at a given module or not.

\begin{definition}
Let $\AC$ be an abelian category. 
We say that an additive invariant $\alpha: \AC \rightarrow T$ \textbf{detects being zero} if the following holds:
\[
\text{$\alpha( A ) = 0$ if and only if $A \cong 0$}
\]
for all $A \in \AC$.
We say $K_0( \AC )$ \textbf{detects being zero} if the following holds:
\[
\text{$[A] = 0$ if and only if $A \cong 0$}
\]
for all $A \in \AC$.
\end{definition}

\begin{lemma}
Let $\AC$ be a abelian category.
There exists an additive invariant that detects being zero if and only if $K_0(\AC)$ detects being zero.
\end{lemma}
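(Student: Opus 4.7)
The plan is to prove the two directions separately, using the universal property of $K_0(\AC)$ as the pivotal tool. Both directions are essentially immediate once one unpacks the definitions, so this should be a short argument without any real obstacle.

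For the implication from right to left, I would observe that the canonical map $A \mapsto [A]$ from $\obj(\AC)$ to $K_0(\AC)$ is itself an additive invariant by definition of the Grothendieck group, and that it takes values in the abelian group $K_0(\AC)$. If $K_0(\AC)$ detects being zero in the sense of the definition, this canonical additive invariant detects being zero. So this direction is immediate.

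For the converse, suppose $\alpha: \obj(\AC) \rightarrow T$ is an additive invariant detecting being zero. By the universal property of $K_0(\AC)$, there exists a (uniquely determined) group homomorphism $\overline{\alpha}: K_0(\AC) \rightarrow T$ satisfying $\overline{\alpha}([A]) = \alpha(A)$ for every $A \in \obj(\AC)$. To verify that $K_0(\AC)$ detects being zero, take $A \in \AC$ with $[A] = 0$ in $K_0(\AC)$. Then
\[
\alpha(A) = \overline{\alpha}([A]) = \overline{\alpha}(0) = 0,
\]
and by the detection hypothesis on $\alpha$ we conclude $A \cong 0$. The reverse implication $A \cong 0 \Rightarrow [A] = 0$ holds in any Grothendieck group since $0 \cong 0 \oplus 0$ forces $[0] = 0$ (or alternatively one applies the short exact sequence $0 \to 0 \to 0 \to 0 \to 0$).

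There is no real obstacle here; the only subtlety worth mentioning is the set-theoretic proviso that $K_0(\AC)$ exists, which is guaranteed whenever $\AC$ is small (as discussed in the preceding paragraph of the excerpt), and this smallness should be understood as part of the hypothesis so that the universal property can legitimately be invoked.
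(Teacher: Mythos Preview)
Your proof is correct and takes essentially the same approach as the paper, which compresses the argument into the single sentence ``Every additive invariant of $\AC$ factors over $K_0(\AC)$.'' You have simply spelled out both directions explicitly.
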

\begin{proof}
Every additive invariant of $\AC$ factors over $K_0(\AC)$.
\end{proof}

\begin{example}\label{example:length_detects_being_zero}
Let $R$ be a left artinian ring. Then $K_0(R\modl)$ detects being zero since $\length_R$ detects being zero, see \Cref{example:length}.
\end{example}

\begin{definition}
Let $R$ be a ring.
We say that an additive invariant
\[
\alpha: \obj( R\modl\modl ) \rightarrow T
\]
\textbf{respects being zero at $M \in R\Modl$} if the following holds:
\[
\text{$\mathcal{G}(M) \cong 0$ implies $\alpha( \mathcal{G} ) = 0$}
\]
for all $\mathcal{G} \in R\modl\modl$.
We say that $\alpha$ \textbf{detects being zero at $M$} if the following holds:
\[
\text{$\alpha( \mathcal{G} ) = 0$ if and only if $\mathcal{G}(M) \cong 0$}
\]
for all $\mathcal{G} \in R\modl\modl$.
\end{definition}

\begin{example}
The additive invariant given by the canonical quotient map
\[
K_0( R\modl\modl ) \rightarrow \frac{K_0( R\modl\modl )}{\langle [\mathcal{G}] \mid \mathcal{G}(M) \cong 0 \rangle } 
\]
respects being zero at $M \in R\Modl$ by construction. Moreover, it clearly is the universal (initial) additive invariant with this property.
\end{example}

\begin{theorem}(Localization theorem)\label{theorem:localisation_thm}
Let $\AC$ be a small abelian category with $\CC \subseteq \AC$ a Serre subcategory. Then we have an exact sequence of abelian groups:
\begin{center}
  \begin{tikzpicture}[mylabel/.style={fill=white}]
      \coordinate (r) at (4,0);
      
      \node (A) {$K_0( \CC )$};
      \node (B) at ($(A)+(r)$) {$K_0( \AC )$};
      \node (C) at ($(B) + (r)$) {$K_0( \frac{\AC}{\CC} )$};
      \node (D) at ($(C) + (r)$) {$0$.};
      
      \draw[->,thick] (A) to node[above]{$[C] \mapsto [C]$} (B);
      \draw[->,thick] (B) to node[above]{$[A] \mapsto [A]$} (C);
      \draw[->,thick] (C) to (D);
  \end{tikzpicture}
\end{center}
\end{theorem}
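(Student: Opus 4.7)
The plan is to show that the composition vanishes and then prove that the induced map
\[
\overline{q}: K_0(\AC)/\mathrm{im}\bigl(K_0(\CC) \to K_0(\AC)\bigr) \longrightarrow K_0(\AC/\CC)
\]
is an isomorphism. The composition $K_0(\CC) \to K_0(\AC) \to K_0(\AC/\CC)$ vanishes because the quotient functor $\AC \to \AC/\CC$ is exact and sends every object of $\CC$ to a zero object, so $\overline{q}$ is well-defined. Surjectivity of the map $K_0(\AC) \to K_0(\AC/\CC)$ (and hence of $\overline{q}$) is immediate, since objects of $\AC/\CC$ are objects of $\AC$ and their classes generate $K_0(\AC/\CC)$.

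For injectivity of $\overline{q}$, I would build an explicit inverse $\widetilde{\phi}$ using the universal property of $K_0(\AC/\CC)$. The idea is to define a set-theoretic map
\[
\phi: \obj(\AC/\CC) \longrightarrow K_0(\AC)/\mathrm{im}(K_0(\CC)), \qquad A \longmapsto [A] + \mathrm{im}(K_0(\CC)),
\]
viewing an object of the quotient as an object of $\AC$, and then verify that $\phi$ descends through isomorphism classes and respects short exact sequences in $\AC/\CC$. Once established, $\phi$ factors as $\widetilde{\phi}: K_0(\AC/\CC) \to K_0(\AC)/\mathrm{im}(K_0(\CC))$, and checking on the generators $[A]$ shows $\overline{q}$ and $\widetilde{\phi}$ are mutually inverse.

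The technical heart of the proof is the verification of the two properties of $\phi$, and this is where I expect the main obstacle to lie. For well-definedness on isomorphism classes, I would use the calculus of fractions for Serre quotients: an isomorphism $A \cong B$ in $\AC/\CC$ is represented by a roof involving a morphism $f: A' \to B'$ in $\AC$ with $A/A', B/B' \in \CC$ and $\kernel(f), \cokernel(f) \in \CC$. Applying the additivity relations in $K_0(\AC)$ to the short exact sequences $0 \to A' \to A \to A/A' \to 0$, $0 \to B' \to B \to B/B' \to 0$, $0 \to \kernel(f) \to A' \to \image(f) \to 0$, and $0 \to \image(f) \to B' \to \cokernel(f) \to 0$ reduces $[A] - [B]$ to a $\Z$-linear combination of classes of objects in $\CC$, hence to zero modulo $\mathrm{im}(K_0(\CC))$. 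For additivity, I would use the standard fact that every short exact sequence in $\AC/\CC$ is isomorphic (in $\AC/\CC$) to the image of a short exact sequence in $\AC$; combined with well-definedness, this immediately yields the additivity relation in the quotient group.
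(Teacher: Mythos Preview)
Your sketch is correct and follows the standard Heller-style argument; the paper itself does not give a proof but simply cites Weibel's $K$-book (where this is Theorem~6.4, attributed to Heller), so there is no ``paper's approach'' to compare against beyond that reference. What you have written is essentially the proof one finds there.

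One small imprecision worth tightening: your description of an isomorphism in $\AC/\CC$ as a morphism $f: A' \to B'$ with $A/A', B/B' \in \CC$ is not quite the Gabriel picture. A morphism $A \to B$ in $\AC/\CC$ is represented by a map $f: A' \to B/B''$ in $\AC$ with $A/A' \in \CC$ and $B'' \in \CC$, and it is an isomorphism precisely when $\kernel(f), \cokernel(f) \in \CC$. The bookkeeping with the four short exact sequences then goes through exactly as you indicate, just with $B/B''$ in place of your $B'$. This does not affect the validity of the argument. Likewise, your appeal to the fact that every short exact sequence in $\AC/\CC$ is isomorphic to the image of one from $\AC$ is the genuine technical point; it is standard (Gabriel, Swan) but does require an argument, so in a full write-up you would either prove it or cite it.
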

\begin{proof}
See \cite[Theorem 6.4]{WeibelKBook}, where Weibel attributes this theorem to Heller.
\end{proof}

\begin{corollary}\label{corollary:K0_of_abM}
Let $R$ be a ring and let $M \in R\Modl$. The canonical quotient map
\[
K_0( R\modl\modl ) \xrightarrow{[\mathcal{G}] \mapsto [ \asBeh{\mathcal{G}}{\M} ]} K_0( \ab{M} )
\]
induces an isomorphism
\[
K_0( \ab{M} ) \cong \frac{K_0( R\modl\modl )}{\langle [\mathcal{G}] \mid \mathcal{G}(M) \cong 0 \rangle }.
\]
\end{corollary}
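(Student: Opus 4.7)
The plan is to invoke the localization theorem (\Cref{theorem:localisation_thm}) applied to the Serre subcategory $\CC := \kernel(\ev{\M}) \subseteq R\modl\modl$. Since $\ev{\M}$ is an exact functor into an abelian category, its kernel is a Serre subcategory, and the Serre quotient $R\modl\modl/\kernel(\ev{\M})$ is by definition $\ab{M}$ (see Paragraph \Cref{para:def_ab}). The category $R\modl\modl$ is essentially small (cf.\ the remark after Paragraph \Cref{para:up_of_Rmodmod}), and $\CC$ inherits this property, so \Cref{theorem:localisation_thm} applies.

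More concretely, I would first note that the localization theorem gives the exact sequence of abelian groups
\[
K_0(\CC) \xrightarrow{\iota_\ast} K_0(R\modl\modl) \xrightarrow{q_\ast} K_0(\ab{M}) \longrightarrow 0,
\]
where $\iota_\ast$ sends $[\mathcal{G}]$ to $[\mathcal{G}]$ and $q_\ast$ sends $[\mathcal{G}]$ to $[\asBeh{\mathcal{G}}{\M}]$. Exactness at $K_0(\ab{M})$ yields surjectivity of $q_\ast$, and exactness at $K_0(R\modl\modl)$ identifies $\kernel(q_\ast)$ with the image of $\iota_\ast$. Hence the induced map from the cokernel of $\iota_\ast$ to $K_0(\ab{M})$ is an isomorphism.

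It then remains to identify the image of $\iota_\ast$ with the subgroup $\langle [\mathcal{G}] \mid \mathcal{G}(M) \cong 0 \rangle \subseteq K_0(R\modl\modl)$. By construction, $K_0(\CC)$ is generated by the symbols $[\mathcal{G}]$ with $\mathcal{G} \in \CC$, i.e.\ with $\mathcal{G}(M) \cong 0$, so its image under $\iota_\ast$ is generated by exactly these classes viewed inside $K_0(R\modl\modl)$. This identification is immediate and there is no substantive obstacle; the only thing to keep in mind is that the generating set on the right hand side consists of classes $[\mathcal{G}]$ coming from $\CC$, which is already a Serre subcategory (so the subgroup it generates coincides with the image of $\iota_\ast$). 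Combining these two steps gives the claimed isomorphism
\[
K_0(\ab{M}) \cong \frac{K_0(R\modl\modl)}{\langle [\mathcal{G}] \mid \mathcal{G}(M) \cong 0 \rangle}.
\]
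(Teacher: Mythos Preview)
Your proposal is correct and follows exactly the paper's approach: the paper's proof consists of the single sentence ``We apply \Cref{theorem:localisation_thm},'' and you have simply unpacked that application in detail.
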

\begin{proof}
We apply \Cref{theorem:localisation_thm}.
\end{proof}

\begin{corollary}\label{corollary:abM_detecs_zero_iff_there_exists}
Let $R$ be a ring and let $M \in R\Modl$.
There exists an additive invariant which detects being zero at $M$ if and only if $K_0( \ab{M} )$ detects being zero.
\end{corollary}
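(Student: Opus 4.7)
The plan is to derive this corollary directly from the identification of $K_0(\ab{M})$ given in \Cref{corollary:K0_of_abM}, together with the defining property $\asBeh{\mathcal{G}}{M} \cong 0 \Leftrightarrow \mathcal{G}(M) \cong 0$ of the Serre quotient functor $R\modl\modl \rightarrow \ab{M}$ from Paragraph \Cref{para:def_ab}. The argument is essentially formal, so there is no real obstacle; the main point is to move carefully between additive invariants on $R\modl\modl$ (that respect being zero at $M$) and additive invariants on the Serre quotient $\ab{M}$.

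For the direction ``$\Leftarrow$'', I would assume that $K_0( \ab{M} )$ detects being zero and define an additive invariant
\[
\alpha: \obj( R\modl\modl ) \rightarrow K_0( \ab{M} ), \quad \mathcal{G} \mapsto [\asBeh{\mathcal{G}}{M}].
\]
This is additive because the quotient functor is exact. To see that $\alpha$ detects being zero at $M$, note that $\alpha(\mathcal{G}) = 0$ means $[\asBeh{\mathcal{G}}{M}] = 0$ in $K_0(\ab{M})$, which by assumption implies $\asBeh{\mathcal{G}}{M} \cong 0$, hence $\mathcal{G}(M) \cong 0$; the converse direction is immediate since $\mathcal{G}(M) \cong 0$ forces $\asBeh{\mathcal{G}}{M} \cong 0$ and thus $[\asBeh{\mathcal{G}}{M}] = 0$.

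For the direction ``$\Rightarrow$'', I would assume that an additive invariant $\alpha: \obj( R\modl\modl ) \rightarrow T$ detecting being zero at $M$ exists. Since $\alpha$ in particular respects being zero at $M$, the induced group homomorphism $K_0( R\modl\modl ) \rightarrow T$ vanishes on the subgroup $\langle [\mathcal{G}] \mid \mathcal{G}(M) \cong 0 \rangle$, and hence factors (via \Cref{corollary:K0_of_abM}) through a group homomorphism $\bar{\alpha}: K_0(\ab{M}) \rightarrow T$ satisfying $\bar{\alpha}([\asBeh{\mathcal{G}}{M}]) = \alpha(\mathcal{G})$. Now let $X \in \ab{M}$ with $[X] = 0$. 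Since the quotient functor is essentially surjective, we may write $X \cong \asBeh{\mathcal{G}}{M}$ for some $\mathcal{G} \in R\modl\modl$, whence $\alpha(\mathcal{G}) = \bar{\alpha}([X]) = 0$. Because $\alpha$ detects being zero at $M$, this forces $\mathcal{G}(M) \cong 0$ and therefore $X \cong \asBeh{\mathcal{G}}{M} \cong 0$, proving that $K_0(\ab{M})$ detects being zero.
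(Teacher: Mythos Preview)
Your proof is correct and follows the same approach as the paper: the paper's proof is the single sentence ``An additive invariant which respects being zero factors over $K_0( \ab{M} )$,'' and you have simply unpacked both directions of this observation in full detail, invoking \Cref{corollary:K0_of_abM} and the essential surjectivity of the quotient functor exactly as needed.
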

\begin{proof}
An additive invariant which respects being zero factors over $K_0( \ab{M} )$.
\end{proof}

\begin{definition}
Let $R$ be a ring and let $\sigma: R \rightarrow S$ be a ring morphism into a right artinian ring $S$.
We note that, since $S$ is right artinian, it is right coherent and thus $\codefect: S\modl\modl \rightarrow \modr S$ is well-defined. Moreover, since $S$ is right artinian, the length gives an additive invariant $K_0(\modr S) \xrightarrow{\length_S} \Z$, see \Cref{example:length}.
Thus, we can define for $\mathcal{G} \in R\modl\modl$:
\[
\length_{\sigma}( \mathcal{G} ) := \length_{S}( \codefect( \sigma_{\ast}( \mathcal{G} ) ) ) \in \Z.
\]
In other words, $\length_{\sigma}$ is the additive invariant obtained by the following composition:
\[
K_0(R\modl\modl) \xrightarrow{K_0(\sigma_{\ast})} K_0(S\modl\modl) \xrightarrow{K_0(\codefect)} K_0(\modr S) \xrightarrow{\length_S} \Z.
\]

\end{definition}

\begin{theorem}\label{theorem:K_0_zero_detection}
Let $R$ be a ring and let $\sigma: R \rightarrow S$ be a ring morphism into a right artinian ring $S$.
If $M \in S\Modl$ is an fp-faithfully flat $S$-module, then $\length_{\sigma}$ detects being zero at $M_{|\sigma}$. In particular, $\length_{\sigma}$ detects being zero at $S_{|\sigma} \in R\Modl$, i.e., at $S$ regarded as a left $R$-module via the map $\sigma$.
\end{theorem}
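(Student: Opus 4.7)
The plan is to decompose the claim into three short equivalences, each of which is supplied by a result already in the paper.

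First, I would verify the base identity $(\sigma_{\ast}\mathcal{G})(M) \cong \mathcal{G}(M_{|\sigma})$ for every $\mathcal{G} \in R\modl\modl$ and every $M \in S\Modl$. Starting from a presentation of $\mathcal{G}$ by a morphism $\alpha \colon N \to N'$ in $R\modl$, Lemma \Cref{lemma:compute_sigma_ast} gives a presentation of $\sigma_{\ast}\mathcal{G}$ by $S \otimes_R \alpha$. Evaluating this latter presentation at $M$ and applying the tensor--hom adjunction $\Hom_S(S \otimes_R -, M) \cong \Hom_R(-, M_{|\sigma})$ turns it into the original presentation of $\mathcal{G}$ evaluated at $M_{|\sigma}$, which yields the desired isomorphism. (This is the object-level incarnation of the commutative square of Paragraph \Cref{para:description_of_sigma_ast}.)

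Second, I would invoke the hypothesis that $M$ is fp-faithfully flat over $S$: by Paragraph \Cref{para:codefect_equiv}, the canonical quotient $S\modl\modl \to \ab{M}$ is equivalent to $\codefect \colon S\modl\modl \to \modr S$. Combined with the definition of $\ab{M}$, this shows that $(\sigma_{\ast}\mathcal{G})(M) \cong 0$ if and only if $\codefect(\sigma_{\ast}\mathcal{G}) \cong 0$ in $\modr S$. Third, since $S$ is right artinian, Example \Cref{example:length} tells us that $\length_S$ detects being zero on $\modr S$, so $\codefect(\sigma_{\ast}\mathcal{G}) \cong 0$ if and only if $\length_\sigma(\mathcal{G}) = \length_S(\codefect(\sigma_{\ast}\mathcal{G})) = 0$. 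Chaining these three equivalences gives $\mathcal{G}(M_{|\sigma}) \cong 0 \Longleftrightarrow \length_\sigma(\mathcal{G}) = 0$, which is what we wanted.

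For the ``in particular'' clause, I would apply the statement with $M = S$ viewed as a left $S$-module. Being free of rank one, $S$ is faithfully flat over itself, hence a fortiori fp-faithfully flat, so the general case applies. Since the argument is just a chain of prior equivalences, there is no real obstacle; the only point demanding a little care is the tensor--hom identification in step one, which must be carried out functorially in $N$ so that the induced map matches the one coming from $\Hom_S(S \otimes_R \alpha, M)$.
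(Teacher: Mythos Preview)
Your proposal is correct and follows essentially the same approach as the paper. The only cosmetic difference is that the paper packages your base identity $(\sigma_{\ast}\mathcal{G})(M) \cong \mathcal{G}(M_{|\sigma})$ into the commutative square of Paragraph~\ref{para:description_of_sigma_ast} and then invokes faithfulness of $\sigma_{\ast}\colon \ab{M_{|\sigma}} \to \ab{M}$, whereas you establish that identity directly via tensor--hom adjunction; the remaining two steps (Paragraph~\ref{para:codefect_equiv} for the fp-faithfully flat hypothesis, and the length invariant on $\modr S$ via \Cref{example:length} / \Cref{example:length_detects_being_zero}) are identical.
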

\begin{proof}
We note that by Paragraph \Cref{para:codefect_equiv} and Paragraph \Cref{para:description_of_sigma_ast}, we have a commutative diagram (up to natural isomorphism) of exact functors
\begin{center}
  \begin{tikzpicture}[mylabel/.style={fill=white}]
    \coordinate (r) at (3,0);
    \coordinate (d) at (0,-2);
    
    \node (11) {$R\modl\modl$};
    \node (12) at ($(11)+(r)$) {$\ab{M|_{\sigma}}$};

    \node (21) at ($(11) + (d)$){$S\modl\modl$};
    \node (22) at ($(21)+(r)$) {$\ab{M}$};

    \node (3) at ($(22) + (d)$) {$\modr S$};
    
    \draw[->,thick] (11) to (12);

    \draw[->,thick] (21) to (22);

    \draw[->,thick] (21) to node[left,yshift=-0.5em]{$\codefect$}(3);
    \draw[->,thick] (22) to node[right]{$\simeq$}(3);
    \draw[->,thick] (11) to node[left]{$\sigma_{\ast}$} (21);
    \draw[->,thick] (12) to node[right]{$\sigma_{\ast}$} (22);
  \end{tikzpicture}
\end{center}
By \Cref{example:length_detects_being_zero}, the invariant $K_0(\modr S) \xrightarrow{\length_S} \Z$ detects being zero.
Thus, we have
\[\length_{\sigma}( \mathcal{G} ) = \length_S(\codefect( \sigma_{\ast}( \mathcal{G} ) )) = 0\]
if and only if
\[\codefect( \sigma_{\ast}( \mathcal{G} ) ) \cong 0.\]
Since $\sigma_{\ast}: \ab{M_{|\sigma}} \rightarrow \ab{M}$ is a faithful functor, this in turn is equivalent to
\[
\asBeh{\mathcal{G}}{M_{|\sigma}} \cong 0
\]
which (by Paragraph \Cref{para:def_ab}) is equivalent to
\[
\mathcal{G}({M_{|\sigma}}) \cong 0.
\]
The claim follows.
\end{proof}

\section{The rank of a finitely presented functor over an integral domain}\label{section:the_rank}

By an \textbf{integral domain} we mean a \emph{commutative} ring whose zero ideal is prime.
We define the rank of a finitely presented functor over an integral domain as an additive invariant and show that it is compatible with the classical notion of the rank of a module by means of the left and right module classes that we introduced in \Cref{definition:lmc_rmc}. As a consequence, we will see that the rank of the co- and contravariant defects are the same (see \Cref{corollary:defect_codefect_same_rank}).

\begin{para}
For a field $Q$, the dimension is an additive invariant for the abelian category $Q\vecl$ which gives an isomorphism $K_0( Q\vecl ) \cong \Z$.
\end{para}

\begin{para}
Let $R$ be an integral domain and let $Q := \Quot(R)$ denote its field of fractions. 
For $M \in R\modl$, we set
\[
\rank( M ) := \dim_Q( Q \otimes_R M ) \in \Z.
\]
If $R$ is coherent, then $R\modl$ is an abelian category, and the rank defines an additive invariant for $R\modl$.
\end{para}

\begin{definition}
Let $R$ be an integral domain and let $Q := \Quot(R)$ denote its field of fractions. 
Evaluation at $Q$ induces an additive invariant
\[
\rank := K_0( \ev{Q} ): K_0(R\modl\modl) \rightarrow K_0(Q\vecl) \cong \Z
\]
which we call the \textbf{rank} of $\mathcal{G} \in R\modl\modl$, i.e., we have
\[
\rank( \mathcal{G} ) = \dim_{Q}( \mathcal{G}( Q ) ).
\]
\end{definition}

\begin{remark}\label{remark:rank_detects_being_zero}
We have 
\[
\mathcal{G}( Q ) = 0 \hspace{1em}\text{if and only if}\hspace{1em} \rank( \mathcal{G} ) = 0,
\]
i.e., the rank detects being zero at $Q$.
\end{remark}

\begin{lemma}\label{lemma:rank_lcm_rcm}
Let $R$ be an integral domain and let $\mathcal{G} \in R\modl\modl$.
Then
\[
\rank( \mathcal{G} ) = \rank( \lmc( \mathcal{G} ) ) = \rank( \rmc( \mathcal{G} ) ).
\]
\end{lemma}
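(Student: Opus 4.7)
My plan is to view each side of each equality as a group homomorphism $K_0( R\modl\modl ) \to \Z$ and to verify the equalities on a generating set. By \Cref{theorem:K_0_fpfunctors}, the group $K_0( R\modl\modl )$ is generated by the classes $[ \Hom(M,-) ]$ for $M \in R\modl$, and dually by the classes $[ (N \otimes -) ]$ for $N \in \modr R$. Both $\rank$ and its compositions $\rank \circ \lmc$ and $\rank \circ \rmc$ with the isomorphisms of \Cref{theorem:K_0_fpfunctors} are group homomorphisms out of $K_0( R\modl\modl )$, so it suffices to compare them on these generators.

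For the first equality, I evaluate both sides on a projective generator $\Hom(M,-)$. Using the presentation given by the zero morphism $M \to 0$ (so that the second module and the cokernel of $\alpha$ are zero), one reads off $\lmc( \Hom(M,-) ) = [M]$, hence $\rank( \lmc( \Hom(M,-) ) ) = \rank(M)$. On the other hand, the tensor-hom adjunction for the $R$-algebra $Q$ gives $\Hom_R( M, Q ) \cong \Hom_Q( Q \otimes_R M, Q )$, so $\rank( \Hom(M,-) ) = \dim_Q \Hom_R( M, Q ) = \dim_Q ( Q \otimes_R M ) = \rank(M)$. Hence the two homomorphisms agree on all projective generators, proving the first equality by linearity.

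For the second equality, I argue dually on an injective generator $(N \otimes -)$. The copresentation given by the zero morphism $N \to 0$ yields $\rmc( (N \otimes -) ) = [N]$, hence $\rank( \rmc( (N \otimes -) ) ) = \rank(N)$. Directly, $\rank( (N \otimes -) ) = \dim_Q ( N \otimes_R Q ) = \rank(N)$, so agreement on injective generators is immediate. All calculations are routine; the only structural input is that projective (respectively injective) generators span $K_0( R\modl\modl )$, which is guaranteed by \Cref{theorem:K_0_fpfunctors}, so I anticipate no substantive obstacle.
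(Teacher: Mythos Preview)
Your proposal is correct and follows essentially the same approach as the paper: both reduce to checking the equalities on the generators $\Hom(M,-)$ (respectively $(N\otimes -)$) of $K_0(R\modl\modl)$, invoke the tensor--hom adjunction $\Hom_R(M,Q)\cong\Hom_Q(Q\otimes_R M,Q)$ for the first equality, and note that the second equality becomes a tautology on the injective generators. Your write-up is slightly more explicit about why checking on generators suffices, but the substance is identical.
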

\begin{proof}
It suffices to test the equality $\rank( \mathcal{G} ) = \rank( \lmc( \mathcal{G} ) )$ for $\mathcal{G} = \Hom(M,-)$, where $M \in R\modl$.
Thus, we need to see that
\[
\dim_Q(\Hom( M, Q ) ) = \dim_Q( Q \otimes M ),
\]
which follows directly from the isomorphism
\[
\Hom_R(M,Q) \cong \Hom_Q( (Q \otimes_R M), Q).
\]

For the second equation, it suffices to test the equality $\rank( \mathcal{G} ) = \rank( \rmc( \mathcal{G} ) )$ for $\mathcal{G} = (N \otimes -)$, where $N \in \modr R$. Unraveling the definitions, this amounts to the trivial equality
\[
\dim_Q( N \otimes Q ) = \dim_Q( N \otimes Q ). \qedhere
\]
\end{proof}

\begin{corollary}\label{corollary:defect_codefect_same_rank}
Let $R$ be a coherent integral domain. Then we have for all finitely presented functors $\mathcal{G}$ in $R\modl\modl$:
\[
\rank( \mathcal{G} ) = \rank( \defect( \mathcal{G} ) ) = \rank( \codefect( \mathcal{G} ) ).
\]
\end{corollary}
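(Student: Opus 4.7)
The plan is to combine the two previous lemmas: Lemma \ref{lemma:rank_lcm_rcm}, which expresses $\rank(\mathcal{G})$ through the left and right module classes, and Lemma \ref{lemma:lmc_defect_rcm_codefect}, which relates the left/right module classes with the co- and contravariant defects via commutative diagrams of Grothendieck groups. The chain of equalities should read
\[
\rank(\defect(\mathcal{G})) = \rank(\lmc(\mathcal{G})) = \rank(\mathcal{G}) = \rank(\rmc(\mathcal{G})) = \rank(\codefect(\mathcal{G})),
\]
where the outer two equalities come from Lemma \ref{lemma:lmc_defect_rcm_codefect} and the inner two from Lemma \ref{lemma:rank_lcm_rcm}.

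First I would observe that since $R$ is a coherent integral domain, $R\modl$ is abelian and the rank is a well-defined additive invariant $\rank\colon K_0(R\modl) \to \Z$; since $Q = \Quot(R)$ is flat over $R$, the assignment $[M] \mapsto \dim_Q(Q \otimes_R M)$ already makes sense on the level of the underlying additive category and so factors as
\[
K_0(|R\modl|) \longrightarrow K_0(R\modl) \xrightarrow{\rank} \Z.
\]
The same holds on the right side, using that $R$ is commutative so that $\modr R = R\modl$ (and hence $\rank$ is defined on $K_0(|\modr R|)$ as well).

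Next, I would invoke the first diagram of Lemma \ref{lemma:lmc_defect_rcm_codefect}: the map $K_0(|R\modl|) \to K_0((R\modl)^{\op})$ sending $[M] \mapsto [M]$ is the canonical identification of the Grothendieck group of an additive category with the Grothendieck group of its opposite, so $\lmc(\mathcal{G})$ and $K_0(\defect)([\mathcal{G}]) = [\defect(\mathcal{G})]$ are the same class. Applying $\rank$ gives $\rank(\lmc(\mathcal{G})) = \rank(\defect(\mathcal{G}))$. Combined with the first equation of Lemma \ref{lemma:rank_lcm_rcm}, we obtain $\rank(\mathcal{G}) = \rank(\defect(\mathcal{G}))$. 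The argument for the codefect is entirely symmetric, using the second diagram of Lemma \ref{lemma:lmc_defect_rcm_codefect} together with the second equation of Lemma \ref{lemma:rank_lcm_rcm}.

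No real obstacle is expected: the statement is essentially a bookkeeping consequence of the previously established compatibilities. The only point that deserves a sentence of justification is that the rank, which is a priori only defined on the abelian category $R\modl$, actually factors through $K_0(|R\modl|)$ so that it can be paired with $\lmc(\mathcal{G})$ and $\rmc(\mathcal{G})$ as elements of the latter group.
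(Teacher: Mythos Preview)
Your proposal is correct and matches the paper's own proof, which simply says to combine \Cref{lemma:rank_lcm_rcm} and \Cref{lemma:lmc_defect_rcm_codefect}. Your extra remark that $\rank$ factors through $K_0(|R\modl|)$ (being additive on direct sums) is a helpful clarification the paper leaves implicit.
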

\begin{proof}
Combine \Cref{lemma:rank_lcm_rcm} and \Cref{lemma:lmc_defect_rcm_codefect}.
\end{proof}

\section{Applications to finitely presented functors over Dedekind domains}\label{section:applications}

In this section, we discuss the problem of how to compute the vanishing locus of a finitely presented functor over a Dedekind domain and provide a solution in \Cref{theorem:main_Dedekind}.
The case of Dedekind domains has two advantages: the points of the Ziegler spectrum are explicitly known (see \Cref{example:zg_dedekind}) and we can work with the structure theorem of finitely generated modules.

\begin{remark}\label{remark:structure_theorem_Dedekind}
We recall the structure theorem for finitely generated (=finitely presented) modules over Dedekind domains (see, e.g., \cite[Chapter II.4]{FroehlichTaylor}).
Let $R$ be a Dedekind domain and let $M \in R\modl$.
Then
\[
M \cong \pr(M) \oplus \bigoplus_{ \substack{\pid \in \mSpec(R) \\ l\geq 1} } (R/\pid^l)^{\oplus \mult_{\pid, l}^M},
\]
where $\pr(M) \in \Proj( R\modl )$ is a finitely presented projective $R$-module and $\mult_{\pid, l}^M$ are non-negative integers for $\pid \in \mSpec( R ), l \geq 1$, where only finitely many of them are non-zero.
The isomorphism class of $\pr(M)$ and the integers $\mult_{\pid, l}^M$ are uniquely determined by the isomorphism class of $M$.
Moreover, for every projective object $P \in \Proj( R\modl )$ of rank $n \geq 1$, we have an isomorphism
\[
P \cong R^{n-1} \oplus Q
\]
with $Q \in \Proj( R\modl )$ and $\rank( Q ) = 1$. The isomorphism class of $Q$ is uniquely determined by the isomorphism class of $P$.
\end{remark}

\subsection{Description of the Grothendieck group}

In this subsection, we describe the Grothendieck group of $R\modl\modl$ over a Dedekind domain $R$ in \Cref{corollary:K_0_Dedekind}.

\begin{lemma}\label{lemma:pr_mult}
Let $R$ be a Dedekind domain. The maps
\begin{align*}
\pr: \obj( |R\modl| ) &\rightarrow K_0( \Proj( R\modl ) ) \\
M &\mapsto [\pr(M)]
\end{align*}
and
\begin{align*}
\mult^{(-)}_{\pid,l}: \obj( |R\modl| ) &\rightarrow  \Z \\
M &\mapsto \mult^M_{\pid,l}
\end{align*}
define additive invariants for $|R\modl|$, where $\pr(M) \in \Proj( R\modl )$ and $\mult^M_{\pid,l}$ for $\pid \in \mSpec( R ), l \geq 1$ are the invariants of $M \in R\modl$ given by the structure theorem for finitely presented modules over Dedekind domains (see \Cref{remark:structure_theorem_Dedekind}).
\end{lemma}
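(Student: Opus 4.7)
The plan is to derive both additivity statements as immediate consequences of the uniqueness clause in the structure theorem recalled in \Cref{remark:structure_theorem_Dedekind}. Given $M, N \in R\modl$, I would first write down the decompositions
\[
M \cong \pr(M) \oplus \bigoplus_{\pid, l} (R/\pid^l)^{\oplus \mult^M_{\pid, l}}, \qquad N \cong \pr(N) \oplus \bigoplus_{\pid, l} (R/\pid^l)^{\oplus \mult^N_{\pid, l}},
\]
take their direct sum, and regroup the terms to obtain
\[
M \oplus N \cong \bigl(\pr(M) \oplus \pr(N)\bigr) \oplus \bigoplus_{\pid, l} (R/\pid^l)^{\oplus (\mult^M_{\pid, l} + \mult^N_{\pid, l})}.
\]

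The key point is that $\pr(M) \oplus \pr(N)$ is again a finitely presented projective $R$-module, so the displayed isomorphism exhibits a decomposition of $M \oplus N$ of exactly the shape prescribed by the structure theorem. Invoking the uniqueness of such a decomposition, I conclude that $\pr(M \oplus N) \cong \pr(M) \oplus \pr(N)$ in $\Proj(R\modl)$ and that $\mult^{M \oplus N}_{\pid, l} = \mult^{M}_{\pid, l} + \mult^{N}_{\pid, l}$ for all $\pid \in \mSpec(R)$ and $l \geq 1$.

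From the isomorphism $\pr(M \oplus N) \cong \pr(M) \oplus \pr(N)$ and the definition of the Grothendieck group of the additive category $\Proj(R\modl)$, I obtain $[\pr(M \oplus N)] = [\pr(M)] + [\pr(N)]$ in $K_0(\Proj(R\modl))$, which is the additivity of $\pr$. The equality of integers gives the additivity of $\mult^{(-)}_{\pid, l}$ as a map into $\Z$ directly. There is no real obstacle here: once uniqueness in the structure theorem is granted, the proof is a one-line matching of invariants, so the only thing to be careful about is explicitly citing \Cref{remark:structure_theorem_Dedekind} for both the existence of the decomposition of $M \oplus N$ and the uniqueness of its invariants.
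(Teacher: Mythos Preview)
Your proposal is correct and follows exactly the same idea as the paper: both proofs appeal to the uniqueness clause in the structure theorem (\Cref{remark:structure_theorem_Dedekind}) to match the invariants of $M \oplus N$ with those of $M$ and $N$. The paper's proof is a one-line reference to uniqueness, and your write-up simply unpacks the direct-sum matching that this reference implicitly contains.
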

\begin{proof}
Follows from the fact that the isomorphism class of $\pr(M)$ and the integers $\mult_{\pid, l}^M$ are uniquely determined by the isomorphism class of $M$.
\end{proof}

\begin{lemma}\label{lemma:K0_additive_Dedekind}
Let $R$ be a Dedekind domain.
The following map is an isomorphism:
\begin{align*}
K_0( |R\modl| ) &\xrightarrow{\cong} K_0( \Proj( R\modl ) ) \oplus \bigoplus_{ \substack{\pid \in \mSpec(R) \\ l\geq 1} } \Z \\
[M] &\mapsto ( [\pr(M)], (\mult^{M}_{\pid, l})_{\pid, l} )
\end{align*}
\end{lemma}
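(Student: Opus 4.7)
The plan is to construct an explicit inverse to the stated map and then check that both compositions are the identity; the structure theorem (\Cref{remark:structure_theorem_Dedekind}) does all the real work. First I would define
\[
\psi: K_0(\Proj(R\modl)) \oplus \bigoplus_{\substack{\pid \in \mSpec(R)\\l \geq 1}} \Z \to K_0(|R\modl|)
\]
on the first summand by the group homomorphism induced by the inclusion $\Proj(R\modl) \hookrightarrow |R\modl|$ (which exists by the universal property of $K_0$ of an additive category), and on the $\Z$-summand indexed by $(\pid,l)$ by sending $1$ to $[R/\pid^l]$. By the universal property of the direct sum of abelian groups this gives a well-defined group homomorphism.

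Next, call the map in the statement $\phi$; it is a well-defined group homomorphism by \Cref{lemma:pr_mult} together with the universal property of $K_0(|R\modl|)$. For $\psi \circ \phi = \id$, I would start with an arbitrary generator $[M] \in K_0(|R\modl|)$, apply $\phi$ to obtain $([\pr(M)],(\mult^M_{\pid,l})_{\pid,l})$, then apply $\psi$ to obtain
\[
[\pr(M)] + \sum_{\pid,l} \mult^M_{\pid,l}\,[R/\pid^l] \in K_0(|R\modl|),
\]
which equals $[M]$ by the additivity of $[-]$ applied to the direct sum decomposition of $M$ furnished by \Cref{remark:structure_theorem_Dedekind}.

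For $\phi \circ \psi = \id$, it suffices to check the identity on generators. On a generator $([P],0)$ with $P \in \Proj(R\modl)$, the composition sends it to $([\pr(P)], (\mult^P_{\pid,l})_{\pid,l})$; uniqueness in the structure theorem forces $\pr(P) \cong P$ and $\mult^P_{\pid,l} = 0$ for all $\pid, l$. On the generator corresponding to $1$ in the $(\pid,l)$-summand, the composition sends it to $([\pr(R/\pid^l)],(\mult^{R/\pid^l}_{\qid,k})_{\qid,k})$; uniqueness again forces $\pr(R/\pid^l) = 0$ and $\mult^{R/\pid^l}_{\qid,k} = \delta_{(\pid,l),(\qid,k)}$, which is the correct standard basis vector.

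There is no real obstacle here beyond carefully invoking uniqueness in \Cref{remark:structure_theorem_Dedekind}; the mild subtlety to keep in mind is that $K_0(\Proj(R\modl))$ is \emph{not} free on $[R]$ in general (nontrivial ideal classes contribute), so the first summand must be treated abstractly via the inclusion-induced map rather than by picking a basis.
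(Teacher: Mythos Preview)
Your proposal is correct and follows essentially the same approach as the paper: both construct the identical inverse map sending $[P] \mapsto [P]$ on the projective summand and the standard basis vector $e_{\pid,l}$ to $[R/\pid^l]$. Your version is simply more detailed, explicitly verifying both compositions via the existence and uniqueness parts of \Cref{remark:structure_theorem_Dedekind}, whereas the paper leaves these checks implicit.
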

\begin{proof}
The map is well-defined by \Cref{lemma:pr_mult}.
Let $e_{\pid,l}$ denote the standard basis vector that is zero in each component except for the $(\pid,l)$-component.
We can define an inverse of the map in the statement via the assignments
\[
K_0(\Proj( R\modl ) ) \ni [P] \mapsto [P] \in K_0( |R\modl| )
\]
and
\[
e_{\pid,l} \mapsto [R/\pid^l] \in K_0( |R\modl| ). \qedhere
\]
\end{proof}

\begin{definition}\label{definition:pr_mult_for_fpfunctors}
Let $R$ be a Dedekind domain. For $\mathcal{G} \in R\modl\modl$, we define
\[
\pr( \mathcal{G} ) := \pr( \lmc( \mathcal{G} ) ) ) \in K_0( \Proj( R\modl ) )
\]
and the \textbf{multiplicities}
\[
\mult_{\pid,l}^{ \mathcal{G} } := \mult_{\pid,l}^{ \lmc(\mathcal{G}) } \in \Z.
\]
\end{definition}

\begin{para}
Let $R$ be a Dedekind domain.
We note that the conversion from left module classes to right module classes (see \Cref{corollary:K_0_of_left_and_right_modules}) maps the class of $R/\pid^l \in R\modl$ to the class of $R/\pid^l \in \modr R$.
This can be seen after tensoring with the localized ring $R_{\pid}$, which is a DVR.
It follows that we could also have used the right module class for defining the multiplicities in \Cref{definition:pr_mult_for_fpfunctors}.
However, a projective left module is converted to its dual right module (see \Cref{example:conversion_of_projectives}). In this sense, we made a choice when we defined $\pr( \mathcal{G} )$ in \Cref{definition:pr_mult_for_fpfunctors} via $\lmc$ and not via $\rmc$. However, this choice will not turn out to be relevant for our findings.
\end{para}

\begin{remark}
By \Cref{theorem:K_0_fpfunctors}, we can pull back additive invariants of $|R\modl|$ to additive invariants of $R\modl\modl$ via taking the left module classes.
In particular, \Cref{definition:pr_mult_for_fpfunctors} defines additive invariants for $R\modl\modl$.
\end{remark}

\begin{corollary}\label{corollary:K_0_Dedekind}
Let $R$ be a Dedekind domain.
The following map is an isomorphism:
\begin{align*}
K_0( R\modl\modl ) &\xlongrightarrow{\cong} K_0( \Proj( R\modl ) ) \oplus \bigoplus_{ \substack{\pid \in \mSpec(R) \\ l\geq 1} } \Z \\
[\mathcal{G}]
&\longmapsto \left( \pr(\mathcal{G}), (\mult^{\mathcal{G}}_{\pid,l})_{\pid,l} \right)
\end{align*}
\end{corollary}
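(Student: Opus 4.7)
The proof is essentially a composition of two isomorphisms that are already established in the paper, so the plan is mostly to assemble them in the right order and verify that the composite realizes the claimed formula.

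First, I would invoke \Cref{theorem:K_0_fpfunctors}, which gives the isomorphism
\[
K_0( R\modl\modl ) \xrightarrow{\cong} K_0( |R\modl| ), \quad [\mathcal{G}] \mapsto \lmc(\mathcal{G}).
\]
Next, I would apply \Cref{lemma:K0_additive_Dedekind}, which uses the structure theorem for finitely generated modules over a Dedekind domain (\Cref{remark:structure_theorem_Dedekind}) to supply the isomorphism
\[
K_0( |R\modl| ) \xrightarrow{\cong} K_0( \Proj( R\modl ) ) \oplus \bigoplus_{ \substack{\pid \in \mSpec(R) \\ l\geq 1} } \Z, \quad [M] \mapsto \bigl( [\pr(M)], (\mult^{M}_{\pid, l})_{\pid, l} \bigr).
\]

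Composing these two isomorphisms yields a group isomorphism with the correct source and target. It remains only to identify the image of $[\mathcal{G}]$ under the composite with the claimed formula, and this is immediate from \Cref{definition:pr_mult_for_fpfunctors}: by definition we have $\pr(\mathcal{G}) = \pr(\lmc(\mathcal{G}))$ and $\mult^{\mathcal{G}}_{\pid,l} = \mult^{\lmc(\mathcal{G})}_{\pid,l}$, so the composite sends $[\mathcal{G}]$ to $(\pr(\mathcal{G}), (\mult^{\mathcal{G}}_{\pid,l})_{\pid,l})$ as required.

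Since each of the two intermediate results has already been proved in the excerpt, there is no genuine obstacle; the only substantive content is bookkeeping, namely that the definitions of $\pr(\mathcal{G})$ and $\mult^{\mathcal{G}}_{\pid,l}$ were chosen precisely so as to make this diagram commute. A one-line proof referring to \Cref{theorem:K_0_fpfunctors}, \Cref{lemma:K0_additive_Dedekind}, and \Cref{definition:pr_mult_for_fpfunctors} suffices.
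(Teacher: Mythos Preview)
Your proposal is correct and matches the paper's approach exactly: the paper's proof is the one-liner ``Follows from \Cref{theorem:K_0_fpfunctors} and \Cref{lemma:K0_additive_Dedekind},'' and you have simply spelled out the composition and the role of \Cref{definition:pr_mult_for_fpfunctors} in identifying the image of $[\mathcal{G}]$.
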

\begin{proof}
Follows from \Cref{theorem:K_0_fpfunctors} and \Cref{lemma:K0_additive_Dedekind}.
\end{proof}

\subsection{Description of \texorpdfstring{$\ab{M}$}{ab(M)} for indecomposable pure-injective modules}

In this subsection, we describe the categories $\ab{M}$ for all indecomposable pure-injective modules $M$ over a Dedekind domain.

\begin{theorem}\label{theorem:abM_for_pure_injectives_Dedekind}
We use the notation from \Cref{example:zg_dedekind}.
Let $R$ be a Dedekind domain.
Then we have the following equivalences of categories:
\begin{align*}
    \ab{F_{\pid, n}} &\simeq \modr R/\pid^n \\
    \ab{A_{\pid}} &\simeq \modr R_{\pid} \\
    \ab{P_{\pid}} &\simeq (R_{\pid}\modl)^{\op} \\
    \ab{Q} &\simeq Q\vecl
\end{align*}
\end{theorem}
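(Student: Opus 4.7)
The strategy is uniform across all four cases: for each indecomposable pure-injective $M$ appearing in the statement, I identify a ring epimorphism $\sigma: R \to S$ and an $S$-module $N$ such that $M \cong N|_{\sigma}$. Paragraph \Cref{para:description_of_sigma_ast} then produces an equivalence $\ab{M} = \ab{N|_{\sigma}} \simeq \ab{N}$, reducing the problem to computing $\ab{N}$ on the $S$-side. To identify $\ab{N}$ I invoke one of two tools: Paragraph \Cref{para:codefect_equiv} gives $\ab{N} \simeq \modr S$ whenever $N$ is fp-faithfully flat over $S$, while Paragraph \Cref{para:defect_equiv} gives $\ab{N} \simeq (S\modl)^{\op}$ whenever $N$ is fp-injective and an fp-cogenerator over $S$. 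In every instance the relevant $S$ is Noetherian (a quotient or localization of $R$, or $R$ itself, or $\Quot(R)$), so the coherence hypotheses of those paragraphs are satisfied.

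The two easiest cases are $F_{\pid,n}$ and $Q$. For $F_{\pid,n} = R/\pid^n$, take the surjection $\sigma: R \twoheadrightarrow R/\pid^n$ and $N = R/\pid^n$ viewed as a module over itself; any ring is faithfully flat over itself, so Paragraph \Cref{para:codefect_equiv} gives the desired equivalence with $\modr R/\pid^n$. The case $\ab{Q} \simeq Q\vecl$ is entirely analogous, using the localization $\sigma: R \to Q = \Quot(R)$ (a ring epimorphism) together with the observation that $\modr Q = Q\vecl$ since $Q$ is a field. For $A_{\pid}$, take $\sigma: R \to R_{\pid}$ and use the isomorphisms $R/\pid^n \cong R_{\pid}/(\pid R_{\pid})^n$ to recognise $A_{\pid}$ as the $(\pid R_{\pid})$-adic completion of the Noetherian local ring $R_{\pid}$; by a classical result in commutative algebra this completion is faithfully flat over $R_{\pid}$, and Paragraph \Cref{para:codefect_equiv} then delivers $\ab{A_{\pid}} \simeq \modr R_{\pid}$.

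The remaining case $P_{\pid} = E(R/\pid)$ is the one that uses Paragraph \Cref{para:defect_equiv} rather than Paragraph \Cref{para:codefect_equiv}. Again I take $\sigma: R \to R_{\pid}$ and set $N := E(R_{\pid}/\pid R_{\pid}) \in R_{\pid}\Modl$; the identification $P_{\pid} \cong N|_{\sigma}$ is the same one used in the proof of \Cref{theorem:checking_kernels_of_defects} (via \cite[Theorem 18.14(vi)]{Matsumura}). The module $N$ is injective and therefore fp-injective, and it is a cogenerator of $R_{\pid}\Modl$ by \cite[Theorem 19.10]{Lam}---exactly as invoked in the proof of \Cref{theorem:checking_kernels_of_defects}---hence an fp-cogenerator. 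Paragraph \Cref{para:defect_equiv} then produces the equivalence $\ab{P_{\pid}} \simeq (R_{\pid}\modl)^{\op}$. The only non-formal ingredients in the whole proof are the two well-known facts from commutative algebra underlying the $A_{\pid}$ and $P_{\pid}$ cases (faithful flatness of the completion and the cogenerator property of the Matlis module), both of which have already been used elsewhere in the paper; I do not anticipate any serious obstruction.
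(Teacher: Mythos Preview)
Your proposal is correct and follows essentially the same approach as the paper's own proof: in each case the paper passes along the same ring epimorphism ($R \to R/\pid^n$, $R \to R_{\pid}$, or $R \to Q$), invokes Paragraph~\ref{para:description_of_sigma_ast}, and then applies Paragraph~\ref{para:codefect_equiv} (for $F_{\pid,n}$, $A_{\pid}$, $Q$) or Paragraph~\ref{para:defect_equiv} (for $P_{\pid}$) using exactly the commutative-algebra facts you cite.
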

\begin{proof}
We will use Paragraphs \Cref{para:defect_equiv}, \Cref{para:codefect_equiv}, and \Cref{para:description_of_sigma_ast} for the concrete determination of the categories in the statement.
Let $\sigma: R \rightarrow R/\pid^n$ be the canonical ring epimorphism. 
We set $M := R/\pid^n \in R/\pid^n\Modl$ and note that $F_{\pid, n} \cong M|_{\sigma}$.
We note that $M$ is a faithfully flat $R/\pid^n$-module (tensoring with $M$ is the identity functor on $R/\pid^n\Modl$).
It follows that
\begin{align*}
    \ab{F_{\pid, n}} \simeq \ab{M} \simeq \modr R/\pid^n.
\end{align*}

Let $\sigma: R \rightarrow R_{\pid}$ be the canonical localization map, which is a ring epimorphism. 
We set $M := \varprojlim_n{R_{\pid}/\pid_{\pid}^n} \in R_{\pid}\Modl$ and note that $A_{\pid} \cong M|_{\sigma}$.
We note that $M$ is a faithfully flat $R_{\pid}$-module, since it is the completion of a noetherian local ring (see, e.g., \cite[Theorem 8.14]{Matsumura}).
It follows that
\begin{align*}
    \ab{A_{\pid}} \simeq \ab{M} \simeq \modr R_{\pid}.
\end{align*}

Let $\sigma: R \rightarrow R_{\pid}$ again be the canonical localization map.
We set $M := E( R_{\pid}/\pid_{\pid} ) \in R_{\pid}\Modl$ and note that $P_{\pid} \cong M|_{\sigma}$ (see, e.g., \cite[Theorem 18.14 (vi)]{Matsumura}).
We note that $M$ is an injective $R_{\pid}$-module by construction.
Moreover, it is a cogenerator of $R_{\pid}\Modl$ (see, e.g., \cite[Theorem 19.10]{Lam}).
It follows that
\begin{align*}
    \ab{P_{\pid}} \simeq \ab{M} \simeq (R\modl)^{\op}.
\end{align*}

Let $\sigma: R \rightarrow Q$ be the canonical localization map, which is a ring epimorphism. 
We set $M := Q \in Q\Modl$ and note that $Q \cong M|_{\sigma}$.
It is trivial that $Q$ is a faithfully flat $Q$-vector space.
It follows that
\[
    \ab{Q} \simeq \ab{M} \simeq \modr Q \simeq Q\vecl. \qedhere
\]
\end{proof}

\begin{example}\label{example:no_additive_invariant_that_detects_zero}
Let $R$ be a Dedekind domain and $\pid \in \mSpec( R )$.
Then there is no additive invariant that detects being zero at $P_{\pid}$.
Indeed, by \Cref{theorem:abM_for_pure_injectives_Dedekind}, we have
\[
\ab{P_{\pid}} \simeq (R_{\pid}\modl)^{\op}.
\]
But $K_0( (R_{\pid}\modl)^{\op} ) \cong \Z$, where the isomorphism is given by the rank of an $R_{\pid}$-module, and there are non-zero modules whose rank is $0$.
By \Cref{corollary:abM_detecs_zero_iff_there_exists}, there is no additive invariant that detects being zero at $P_{\pid}$.
Similarly, there is no additive invariant that detects being zero at $A_{\pid}$.
We note, however, that there is an additive invariant for $R_{\pid}\modl\modl$ that detects being zero at $P_{\pid} \oplus A_{\pid}$, see \Cref{corollary:AplusP_detection}.
\end{example}

\subsection{Hilbert functions}\label{subsection:hilbert_function}

In this subsection, we introduce the Hilbert function of a finitely presented functor $\mathcal{G}$ over a Dedekind domain. It is an additive invariant that allows us to understand the vanishing of $\mathcal{G}$ on all points of the form $F_{\pid,n} \in \Zg(R)$, for $\pid \in \mSpec(R)$, $n \geq 1$.

\begin{definition}
    Let $R$ be a Dedekind domain and let $\mathcal{G} \in R\modl\modl$.
    The \textbf{Hilbert function} of $\mathcal{G}$ is defined as
     \begin{align*}
     \mSpec( R ) \times \{ n \in \Z \mid n \geq 1 \} &\xrightarrow{\Hilb( \mathcal{G} )} \Z \\
     (\pid, n) &\mapsto \length_{\sigma_{\pid,n}}( \mathcal{G} ),
     \end{align*}
     where $\sigma_{\pid,n}: R \rightarrow R/\pid^n$ denotes the canonical ring epimorphism for $\pid \in \mSpec(R)$, $n \geq 1$.
\end{definition}

\begin{remark}
The function $\mathcal{G} \mapsto \Hilb( \mathcal{G} )$ is an additive invariant for $R\modl\modl$. Here, we regard $\Hilb( \mathcal{G} )$ as an element of the abelian group of functions  of type 
\[\mSpec( R ) \times \{ n \in \Z \mid n \geq 1 \} \rightarrow \Z.\]
\end{remark}

\begin{lemma}\label{lemma:hilb_detects_all_fpn}
Let $R$ be a Dedekind domain, $\pid \in \mSpec( R )$, $n \geq 1$. Then
\[
\text{$\Hilb( \mathcal{G} )(\pid, n) = 0$ if and only if $\mathcal{G}(F_{\pid,n}) = 0$.}
\]
\end{lemma}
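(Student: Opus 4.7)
The plan is to recognize the statement as a direct specialization of \Cref{theorem:K_0_zero_detection}. Set $\sigma := \sigma_{\pid,n}: R \rightarrow R/\pid^n$ and $M := R/\pid^n$, viewed as a module over itself. Then $F_{\pid,n} = M_{|\sigma}$ by construction, and $\Hilb(\mathcal{G})(\pid,n) = \length_\sigma(\mathcal{G})$ by the definition of the Hilbert function.

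First I would verify the hypotheses of \Cref{theorem:K_0_zero_detection}. The target ring $R/\pid^n$ is a local artinian ring (the quotient of a Dedekind domain by a power of a maximal ideal has finite length as a module over itself), hence in particular right artinian. The module $M = R/\pid^n$ over itself is faithfully flat, since tensoring with $M$ over $R/\pid^n$ is the identity functor on $R/\pid^n\Modl$; in particular it is fp-faithfully flat.

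The theorem then yields directly that $\length_\sigma$ detects being zero at $M_{|\sigma} = F_{\pid,n}$, i.e.,
\[
\length_\sigma(\mathcal{G}) = 0 \Longleftrightarrow \mathcal{G}(F_{\pid,n}) \cong 0,
\]
which is precisely the claim once one substitutes the defining equality $\Hilb(\mathcal{G})(\pid,n) = \length_\sigma(\mathcal{G})$. There is no real obstacle here: the only thing to check is that the ambient setup of \Cref{theorem:K_0_zero_detection} applies, which it does tautologically in this very clean case where the target ring acts on itself. The proof should therefore consist of a one-line appeal to that theorem, with a brief sentence justifying that $R/\pid^n$ is right artinian and that $M$ is fp-faithfully flat over itself.
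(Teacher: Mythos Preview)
Your proposal is correct and matches the paper's approach exactly: the paper's proof is the single line ``Follows from \Cref{theorem:K_0_zero_detection},'' and you have simply unpacked that citation by naming the ring map $\sigma = \sigma_{\pid,n}$, the module $M = R/\pid^n$, and verifying the (trivial) hypotheses. There is nothing to add or correct.
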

\begin{proof}
Follows from \Cref{theorem:K_0_zero_detection}.
\end{proof}

\begin{notation}
We introduce functions of type $\mSpec( R ) \times \{ n \in \Z \mid n \geq 1 \} \rightarrow \Z$:
\[
\lambda: (\qid, n) \mapsto n
\]
and
\[
\mu_{\pid, l}: (\qid, n) \mapsto \delta_{\pid, \qid} \cdot \min\{l,n\},
\]
for $\pid \in \mSpec(R)$, $l \geq 1$, where $\delta_{\pid, \qid}$ denotes the Kronecker delta.
\end{notation}

\begin{lemma}\label{lemma:Hilb_of:P}
Let $R$ be a Dedekind domain and let $P \in \Proj(R\modl)$ be a projective module of rank $1$. Then
\[
\Hilb( \Hom( P, - ) ) = \lambda.
\]
\end{lemma}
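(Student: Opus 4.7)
The plan is to unpack the definition of $\Hilb$ and reduce to a direct calculation with the DVR $R_\qid$. By definition, for fixed $\qid \in \mSpec(R)$ and $n \geq 1$, we have $\Hilb(\Hom(P,-))(\qid, n) = \length_{\sigma_{\qid,n}}(\Hom(P,-)) = \length_S(\codefect(\sigma_{\ast}(\Hom(P,-))))$ where $S := R/\qid^n$ and $\sigma := \sigma_{\qid,n}$. The first step is to compute $\sigma_{\ast}(\Hom(P,-))$. Applying \Cref{lemma:compute_sigma_ast} to the trivial presentation $\Hom(0,-) \to \Hom(P,-) \to \Hom(P,-) \to 0$ coming from the zero map $P \to 0$ gives $\sigma_{\ast}(\Hom(P,-)) \cong \Hom_S(S \otimes_R P, -)$.

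Next, I would use that $S \otimes_R P$ is a finitely presented projective $S$-module, since base change preserves both properties. Paragraph \Cref{para:bijective_objects} then yields $\Hom_S(S \otimes_R P, -) \cong ((S \otimes_R P)^{\ast} \otimes_S -)$, and applying $\codefect$ (which sends $(N \otimes_S -)$ to $N$) gives $\codefect(\sigma_{\ast}(\Hom(P,-))) \cong (S \otimes_R P)^{\ast}$.

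The substantive remaining step, and the place where the Dedekind hypothesis enters, is to identify $S \otimes_R P \cong S$ as an $S$-module. Since $\qid$ is a maximal ideal and $\qid^n$ is $\qid$-primary, every element of $R \setminus \qid$ acts invertibly on $S$, so the $R$-module structure on $S$ factors through the localization map $R \to R_\qid$, yielding $S \otimes_R P \cong S \otimes_{R_\qid} P_\qid$. By the definition of a Dedekind domain, $R_\qid$ is a DVR, and finitely generated projectives over a local ring are free; the rank-$1$ hypothesis then forces $P_\qid \cong R_\qid$. Combining these, $S \otimes_R P \cong S$, whence $(S \otimes_R P)^{\ast} \cong \Hom_S(S,S) \cong S$, and $\length_S(S) = n$. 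This gives $\Hilb(\Hom(P,-))(\qid,n) = n = \lambda(\qid,n)$, as desired. No step is genuinely difficult; the only care required is in correctly chaining the functorial identifications: the base-change formula for $\sigma_{\ast}$, the isomorphism $\Hom_S(Q,-) \cong (Q^{\ast} \otimes_S -)$ for projective $Q$, and the localization-based identification of $S \otimes_R P$.
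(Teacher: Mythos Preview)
Your proof is correct and follows essentially the same route as the paper: compute $\sigma_\ast(\Hom(P,-))$ via \Cref{lemma:compute_sigma_ast}, identify the resulting hom functor with a tensor functor using \Cref{para:bijective_objects}, apply $\codefect$, and use $\length_{R/\qid^n}(R/\qid^n)=n$. The only difference is that you spell out, via localization at $\qid$, why $R/\qid^n \otimes_R P \cong R/\qid^n$, whereas the paper states this isomorphism without justification.
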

\begin{proof}
Let $\sigma: R \rightarrow R/\pid^n$ denote the canonical ring epimorphism for $\pid \in \mSpec(R)$ and $n \geq 1$.
Using \Cref{lemma:compute_sigma_ast}, we compute
\begin{align*}
    \codefect( \sigma_{\ast}( \Hom_R( P, - ) ) ) &\cong \codefect( \Hom_{R/\pid^n}( R/\pid^n \otimes_R P, - ) ) \\
    &\cong \codefect( \Hom_{R/\pid^n}( R/\pid^n , - ) ) \\
    &\cong \codefect( R/\pid^n \otimes_{R/\pid^n} - )  \cong R/\pid^n.
\end{align*}
Since $\length_{R/\pid^n}( R/\pid^n ) = n$, the claim follows.
\end{proof}

\begin{lemma}\label{lemma:Hilb_of:Rmodpn}
Let $R$ be a Dedekind domain, $\pid \in \mSpec(R)$, $l \geq 1$.
Then
\[
\Hilb( \Hom( R/\pid^l, - ) ) = \mu_{\pid, l}.
\]
\end{lemma}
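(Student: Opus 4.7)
The plan is to fix $\qid \in \mSpec(R)$ and $n \geq 1$, write $\sigma = \sigma_{\qid,n} \colon R \to S := R/\qid^n$, and evaluate $\length_{\sigma}(\Hom_R(R/\pid^l, -)) = \length_S(\codefect(\sigma_{\ast}\Hom_R(R/\pid^l, -)))$ directly, then compare with $\mu_{\pid,l}(\qid, n) = \delta_{\pid,\qid}\min\{l,n\}$. By \Cref{lemma:compute_sigma_ast} applied to the tautological presentation of $\Hom_R(R/\pid^l, -)$ (given by the morphism $R/\pid^l \to 0$), I obtain $\sigma_{\ast}\Hom_R(R/\pid^l, -) \cong \Hom_S(S \otimes_R R/\pid^l, -)$.

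Next I would split into two cases. If $\pid \neq \qid$, then the coprimality $\qid^n + \pid^l = R$ of distinct maximal ideals forces $S \otimes_R R/\pid^l = 0$, hence $\sigma_{\ast}\Hom_R(R/\pid^l, -) = 0$ and its length vanishes, matching $\mu_{\pid,l}(\qid, n) = 0$. In the case $\pid = \qid$, set $m := \min\{l, n\}$; then $S \otimes_R R/\pid^l \cong R/\pid^m$. Because $R$ is Dedekind and $\pid$ is maximal, we have $S \cong R_{\pid}/\pid^n R_{\pid}$, an Artinian local principal ideal ring whose maximal ideal is generated by the image $\bar\pi$ of a uniformizer of $R_{\pid}$; under this identification $R/\pid^m$ becomes $S/\bar\pi^m S$ and admits the projective presentation $S \xrightarrow{\bar\pi^m} S \to R/\pid^m \to 0$ over $S$.

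The final step is to feed this presentation into \Cref{lemma:proj_res_of_tensor_functor} (using $S^{\ast} = S$) to produce an exact sequence of functors $0 \to \Hom_S(R/\pid^m, -) \to (S \otimes_S -) \to (S \otimes_S -) \to (\Tr(R/\pid^m) \otimes_S -) \to 0$, and then to apply the exact functor $\codefect$, yielding the four-term exact sequence
\[
0 \to \codefect(\Hom_S(R/\pid^m, -)) \to S \xrightarrow{\bar\pi^m\,\cdot} S \to \Tr(R/\pid^m) \to 0
\]
in $\modr S$. Since $\Tr(R/\pid^m) \cong S/\bar\pi^m S \cong R/\pid^m$ has length $m$ over $S$, the additivity of length on this sequence immediately forces $\length_S(\codefect(\Hom_S(R/\pid^m, -))) = m$, which equals $\mu_{\pid,l}(\pid, n)$ as required.

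The main obstacle is the structural identification $S \cong R_{\pid}/\pid^n R_{\pid}$, which ensures that $S$ is an Artinian local principal ideal ring and hence that the ideal $\pid^m/\pid^n \subset S$ is principal, so that $R/\pid^m$ admits a clean length-$2$ projective presentation over $S$. Once this is in hand, the rest is routine bookkeeping with \Cref{lemma:compute_sigma_ast}, \Cref{lemma:proj_res_of_tensor_functor}, and additivity of length.
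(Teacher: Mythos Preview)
Your proof is correct and follows essentially the same approach as the paper's: reduce via \Cref{lemma:compute_sigma_ast} to $\codefect(\Hom_{S}(S\otimes_R R/\pid^l,-))$ over $S=R/\qid^n$, split into the cases $\pid\neq\qid$ and $\pid=\qid$, and compute the tensor product in each case. The only minor difference is the final step for $\pid=\qid$: the paper directly identifies $\codefect(\Hom_{S}(R/\qid^{m},-))\cong\Hom_{S}(R/\qid^{m},S)\cong R/\qid^{m}$, whereas you extract the length via the four-term exact sequence of \Cref{lemma:proj_res_of_tensor_functor} and additivity of length --- a slightly longer but equally valid route that relies only on results explicitly stated in the paper.
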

\begin{proof}
Let $\sigma: R \rightarrow R/\qid^n$ denote the canonical ring epimorphism for $\qid \in \mSpec(R)$ and $n \geq 1$.
Using \Cref{lemma:compute_sigma_ast}, we compute
\begin{align*}
    \codefect( \sigma_{\ast}( \Hom_R( R/\pid^l, - ) ) ) &\cong \codefect( \Hom_{R/\qid^n}( R/\qid^n \otimes_R R/\pid^l, - ) )
\end{align*}
If $\pid \neq \qid$, then $R/\qid^n \otimes_R R/\pid^l \cong 0$.
If $\pid = \qid$, then $R/\qid^n \otimes_R R/\pid^l \cong R/\qid^{\min\{l,n\}}$, and we compute
\begin{align*}
\codefect( \Hom_{R/\qid^n}( R/\qid^n \otimes_R R/\pid^l, - ) ) &\cong \codefect( \Hom_{R/\qid^n}( R/\qid^{\min\{l,n\}}, - ) ) \\
&\cong \Hom_{R/\qid^n}( R/\qid^{\min\{l,n\}}, R/\qid^n ) \cong R/\qid^{\min\{l,n\}}
\end{align*}
which is of length $\min\{l,n\}$. The claim follows.
\end{proof}

\begin{theorem}\label{theorem:compute_hilbert_function}
Let $R$ be a Dedekind domain.
The Hilbert function of a finitely presented functor $\mathcal{G} \in R\modl\modl$ is given by the following formula:
\[
\Hilb( \mathcal{G} ) = \rank( \mathcal{G} ) \cdot \lambda + \sum_{\substack{\pid \in \mSpec(R) \\ l \geq 1} }\mult_{\pid,l}^{ \mathcal{G} } \cdot \mu_{\pid, l}.
\]
\end{theorem}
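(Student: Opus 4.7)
The plan is to reduce the equality to a check on generators of $K_0( R\modl\modl )$. First, I would observe that both sides of the claimed identity define additive invariants on $R\modl\modl$ with values in the abelian group of functions $\mSpec(R) \times \{ n \geq 1 \} \to \Z$. Indeed, $\Hilb$ is defined pointwise by $\length_{\sigma_{\pid,n}}$, which is a composition of additive maps in the sense of \Cref{theorem:K_0_zero_detection}, while the right-hand side is a $\Z$-linear combination of the additive invariants $\rank$ and $\mult_{\pid,l}^{(-)}$ from \Cref{definition:pr_mult_for_fpfunctors}. Hence both sides descend to group homomorphisms from $K_0(R\modl\modl)$ to this function group.

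By \Cref{corollary:K_0_Dedekind}, or equivalently by combining \Cref{theorem:K_0_fpfunctors} with \Cref{lemma:K0_additive_Dedekind}, the group $K_0(R\modl\modl)$ is generated by the classes $[\Hom(P,-)]$ for $P \in \Proj(R\modl)$ and $[\Hom(R/\pid^l,-)]$ for $\pid \in \mSpec(R)$, $l \geq 1$. Consequently, it suffices to verify the equation for these two families of generators.

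For $\mathcal{G} = \Hom(P,-)$ with $P \in \Proj(R\modl)$ of rank $n$, I would write $P \cong R^{n-1} \oplus Q$ with $\rank(Q) = 1$ (\Cref{remark:structure_theorem_Dedekind}) and use additivity of $\Hilb$ together with \Cref{lemma:Hilb_of:P} to conclude $\Hilb(\Hom(P,-)) = n \cdot \lambda$. On the other hand, $\rank(\Hom(P,-)) = \dim_Q(\Hom_R(P,Q)) = n$ and $\mult_{\pid,l}^{\Hom(P,-)} = 0$ for all $\pid, l$, so the right-hand side equals $n \cdot \lambda$ as well. For $\mathcal{G} = \Hom(R/\pid^l,-)$, \Cref{lemma:Hilb_of:Rmodpn} gives $\Hilb(\mathcal{G}) = \mu_{\pid,l}$, while $\rank(\mathcal{G}) = \dim_Q \Hom_R(R/\pid^l, Q) = 0$, and the only non-zero multiplicity attached to $R/\pid^l$ is $\mult_{\pid,l}^{\mathcal{G}} = 1$; so the right-hand side collapses to $\mu_{\pid,l}$.

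There is no substantial obstacle once additivity of both sides is recognised and the generators are identified: the computation is genuinely just bookkeeping around \Cref{lemma:Hilb_of:P} and \Cref{lemma:Hilb_of:Rmodpn}. The only mildly delicate point is making sure that when one passes from $K_0(|R\modl|)$ to $K_0(R\modl\modl)$ via $[M] \mapsto [\Hom(M,-)]$, the invariants $\rank$ and $\mult_{\pid,l}$ on the right-hand side correspond precisely to the standard invariants of $\pr(M)$ and of the torsion part of $M$, which is built into \Cref{definition:pr_mult_for_fpfunctors}.
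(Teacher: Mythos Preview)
Your proposal is correct and follows essentially the same route as the paper: both arguments use the description of $K_0(R\modl\modl)$ from \Cref{corollary:K_0_Dedekind} together with the two computations in \Cref{lemma:Hilb_of:P} and \Cref{lemma:Hilb_of:Rmodpn}. The only cosmetic difference is that the paper decomposes $[\mathcal{G}]$ as a combination of classes $[\Hom(P_i,-)]$ (with $\rank(P_i)=1$) and $[\Hom(R/\pid^l,-)]$ and then applies $\Hilb$ to that expression, whereas you phrase it as comparing two additive invariants on a generating set; the content is the same.
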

\begin{proof}
From \Cref{corollary:K_0_Dedekind}, we get
\[
[ \mathcal{G} ] = \sum_{i \in I} a_i \cdot [\Hom(P_i,-)] + \sum_{\substack{\pid \in \mSpec(R) \\ l \geq 1} }\mult_{\pid,l}^{ \mathcal{G} } \cdot [ \Hom( R/\pid^l, - ) ],
\]
where $I$ is a finite index set, the $P_i$ are projective modules of rank $1$, $a_i \in \Z$.
Evaluation at $Q$ yields
\begin{align*}
\rank( \mathcal{G} ) = \sum_{i \in I}a_i.
\end{align*}
Now, the claim follows from \Cref{lemma:Hilb_of:P} and \Cref{lemma:Hilb_of:Rmodpn}.
\end{proof}

\begin{definition}
Let $R$ be a Dedekind domain and let $\mathcal{G} \in R\modl\modl$.
The \textbf{Hilbert polynomial} of $\mathcal{G}$ at $\pid \in \mSpec( R )$ is defined as
\begin{align*}
\HilbP( \mathcal{G} )(\pid, x) := \rank( \mathcal{G} ) \cdot x + \sum_{l \geq 1}\mult_{\pid,l}^{ \mathcal{G} } \cdot l \in \Z[x].
\end{align*}
\end{definition}

\begin{para}
The Hilbert polynomial is an additive invariant of $R\modl\modl$ for each $\pid \in \mSpec(R)$. By \Cref{theorem:compute_hilbert_function}, it coincides with the Hilbert function for $n$ large enough, more precisely, for each $\pid \in \mSpec(R)$ we have 
\[
\HilbP( \mathcal{G} )(\pid, n) = \Hilb( \mathcal{G} )( \pid, n )
\]
for $n \geq \max(\{l \mid \mult_{\pid,l}^{ \mathcal{G} } \neq 0\} \cup \{1\})$, 
\end{para}

\subsection{Computing the vanishing locus of a finitely presented functor}

The following theorem summarizes our results for the computation of the vanishing locus of a finitely presented functor over a Dedekind domain.

\begin{theorem}\label{theorem:main_Dedekind}
We use the notation of \Cref{example:zg_dedekind}.
Let $R$ be a Dedekind domain and let $\mathcal{G} \in R\modl\modl$.
Then
\[
\V( \mathcal{G} ) = \V_{\Hilb}( \mathcal{G} ) \cup \V_{\defect}( \mathcal{G} ) \cup \V_{\codefect}( \mathcal{G} ) \cup \V_{\rank}( \mathcal{G} ),
\]
where the sets on the right hand side are given as follows:
\begin{align*}
 \V_{\Hilb}( \mathcal{G} ) &:= \{ F_{\pid,n} \mid \text{$\pid \in \mSpec(R)$, $n \geq 1$, such that $\Hilb( \mathcal{G} )(\pid,n) = 0$} \}\\
 \V_{\defect}( \mathcal{G} ) &:= \{ P_{\pid} \mid \text{$\pid \in \mSpec(R)$ such that $R_{\pid} \otimes \defect( \mathcal{G} ) \cong 0$} \}\\
 \V_{\codefect}( \mathcal{G} ) & := \{ A_{\pid} \mid \text{$\pid \in \mSpec(R)$ such that $\codefect(\mathcal{G}) \otimes R_{\pid}$} \cong 0\}\\
 \V_{\rank}( \mathcal{G} ) & := 
 \begin{cases}
    \{ Q \} & \text{$\rank( \mathcal{G} ) = 0$} \\
    \emptyset & \text{otherwise}
  \end{cases}
\end{align*}
\end{theorem}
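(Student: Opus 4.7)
My plan is to verify the identity by cases on the four families of indecomposable pure-injective modules classified in Example \ref{example:zg_dedekind}. Each of the four sets on the right-hand side is confined by definition to exactly one of these families (respectively $F_{\pid,n}$, $P_{\pid}$, $A_{\pid}$, and $Q$), so it suffices to show, for each family, that a point of that family lies in $\V(\mathcal{G})$ if and only if it lies in the corresponding $\V_{\star}(\mathcal{G})$.

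The two ``boundary'' families are handled directly by earlier results. For $F_{\pid,n}$, Lemma \ref{lemma:hilb_detects_all_fpn} gives $\mathcal{G}(F_{\pid,n}) \cong 0$ iff $\Hilb(\mathcal{G})(\pid,n) = 0$. For $Q = \Quot(R)$, Remark \ref{remark:rank_detects_being_zero} gives $\mathcal{G}(Q) \cong 0$ iff $\rank(\mathcal{G}) = 0$. Note that $Q$ is intentionally placed in $\V_{\rank}$ rather than in $\V_{\defect}$ or $\V_{\codefect}$: the latter two sets range only over $\pid \in \mSpec(R)$, consistent with the fact that $Q$ is the unique point corresponding to the generic point $\pid = 0$ of $\Spec(R)$.

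The injective and flat cases both reduce to Theorem \ref{theorem:checking_kernels_of_defects}. For $P_{\pid} = E(R/\pid)$ with $\pid \in \mSpec(R)$, the first half of that theorem gives $\mathcal{G}(P_{\pid}) \cong 0$ iff $R_{\pid} \otimes_R \defect(\mathcal{G}) \cong 0$, which is exactly the condition defining $\V_{\defect}(\mathcal{G})$. For $A_{\pid}$, I apply the second half to obtain $\mathcal{G}(D(E(R/\pid))) \cong 0$ iff $\codefect(\mathcal{G}) \otimes_R R_{\pid} \cong 0$, provided I can identify $A_{\pid}$ with $D(E(R/\pid))$ under elementary duality.

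This identification is the main obstacle. I plan to handle it by matching two classifications of the flat points of $\Zg(R)$: by Lemmas \ref{lemma:kercodefect_flat} and \ref{lemma:Matlis_flat}, these points are exactly $\{D(E(R/\qid)) \mid \qid \in \Spec(R)\}$, while Example \ref{example:zg_dedekind} combined with knowledge of which modules are flat (completions and the quotient field) lists them as $\{A_{\qid} \mid \qid \in \mSpec(R)\} \cup \{Q\}$. Since $Q$, being simultaneously injective and flat, is forced to be self-dual under elementary duality (Theorem \ref{theorem:flat_inj_duality}), the remaining points must correspond bijectively, giving $D(E(R/\pid)) \cong A_{\pid}$ for each $\pid \in \mSpec(R)$. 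Alternatively, I can sidestep elementary duality entirely via Theorem \ref{theorem:abM_for_pure_injectives_Dedekind}: the equivalence $\ab{A_{\pid}} \simeq \modr R_{\pid}$ is implemented by $\codefect \circ \sigma_{\ast}$ for $\sigma: R \to R_{\pid}$, and applying the AGJ dual of Lemma \ref{lemma:compute_sigma_ast} to a copresentation of $\mathcal{G}$ together with the flatness of $R_{\pid}$ over $R$ shows that this composite sends $\mathcal{G}$ to $\codefect(\mathcal{G}) \otimes_R R_{\pid}$, yielding the desired equivalence directly.
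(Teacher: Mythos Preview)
Your argument is correct and follows the paper's own case-by-case strategy, invoking exactly the same results (\Cref{lemma:hilb_detects_all_fpn}, \Cref{theorem:checking_kernels_of_defects}, \Cref{remark:rank_detects_being_zero}) for exactly the same four families of points. You are in fact more careful than the paper on the $A_{\pid}$ case: the paper applies \Cref{theorem:checking_kernels_of_defects} to $A_{\pid}$ without comment, tacitly using the identification $A_{\pid} \cong D(E(R/\pid))$, which is not verified anywhere in the text.

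Of your two proposed justifications for this step, only the second is complete. The first (matching the two descriptions of the flat points of $\Zg(R)$) shows at best that $D$ induces \emph{some} bijection $\{E(R/\pid)\}_{\pid \in \mSpec(R)} \to \{A_{\pid}\}_{\pid \in \mSpec(R)}$ after removing $Q$ from both sides, but nothing in that counting argument forces the bijection to preserve the index $\pid$. Your alternative via \Cref{theorem:abM_for_pure_injectives_Dedekind} does work and avoids elementary duality entirely: the equivalence $\ab{A_{\pid}} \simeq \modr R_{\pid}$ established there is realized by $\codefect_{R_{\pid}} \circ \sigma_{\ast}$ for $\sigma\colon R \to R_{\pid}$, and comparing the two exact functors $\codefect_{R_{\pid}} \circ \sigma_{\ast}$ and $(- \otimes_R R_{\pid}) \circ \codefect_{R}$ on the forgetful functor (flatness of $R_{\pid}$ over $R$ ensures the latter is exact) shows they agree by the universal property of $R\modl\modl$, whence $\mathcal{G}(A_{\pid}) \cong 0$ iff $\codefect(\mathcal{G}) \otimes_R R_{\pid} \cong 0$ as required.
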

\begin{proof}
By \Cref{example:zg_dedekind}, we need to check when $\mathcal{G}$ vanishes on the following points:
\[
\Zg(R) = \{ F_{\pid,n}, P_{\pid}, A_{\pid}, Q \mid  n \geq 1, \pid \in \mSpec( R ) \}.
\]
By \Cref{lemma:hilb_detects_all_fpn}, $\mathcal{G}(F_{\pid,n}) \cong 0$ can be checked via the vanishing of the Hilbert function.
By \Cref{theorem:checking_kernels_of_defects}, $\mathcal{G}(P_{\pid}) \cong 0$ can be checked via $R_{\pid} \otimes \defect( \mathcal{G} ) \cong 0$.
Again by \Cref{theorem:checking_kernels_of_defects}, $\mathcal{G}(A_{\pid}) \cong 0$ can be checked via $\codefect(\mathcal{G}) \otimes R_{\pid} \cong 0$.
By \Cref{remark:rank_detects_being_zero}, the rank detects being zero at $Q$. This completes the proof.
\end{proof}

\begin{remark}
The vanishing locus $\V_{\Hilb}( \mathcal{G} )$ of the Hilbert function can be computed explicitly as follows.
First, we observe that the set 
\[
\{ \pid \in \mSpec(R) \mid \text{$\exists l \geq 1$ such that $\mult_{\pid,l}^{ \mathcal{G} } \neq 0$} \}
\]
is finite by \Cref{corollary:K_0_Dedekind}. We set $m_{\pid} := \max(\{l \mid \mult_{\pid,l}^{ \mathcal{G} } \neq 0\} \cup \{1\})$ for each $\pid \in \mSpec(R)$.
Second, we define
\[
\V_{\HilbP}( \mathcal{G} ) := \{ F_{\pid,n} \mid \text{$\pid \in \mSpec(R)$, $n \geq 1$, such that $\HilbP( \mathcal{G} )(\pid,n) = 0$}\}.
\]
Now, we can compute $\V_{\Hilb}( \mathcal{G} )$ as the following union:
\[
\{ F_{\pid,n} \in \V_{\Hilb}( \mathcal{G} ) \mid \text{$\pid \in \mSpec(R)$, $n < m_{\pid}$} \} \cup \{ F_{\pid,n} \in \V_{\HilbP}( \mathcal{G} ) \mid \text{$\pid \in \mSpec(R)$, $n \geq m_{\pid}$} \}.
\]
The set on the left hand side involves the testing of only finitely many primes and integers. The set on the right hand side are simply the vanishing loci of linear functions.
\end{remark}

\begin{para}
For the following examples, we use the following notation:
Let $R$ be a Dedekind domain at let $\pid \in \mSpec(R)$.
Let $p \in \pid_{\pid} \subseteq R_{\pid}$ be a generator of the principal ideal of the localization of $R$ at $\pid$. Note that $p$ acts on $R/\pid^n$ for every $n \geq 1$, since $R/\pid^n \cong R_{\pid}/\pid_{\pid}^n$ as $R$-modules.
\end{para}

\begin{example}\label{example:S_1}
We have the following short exact sequence of $R$-modules:
\begin{center}
  \begin{tikzpicture}[mylabel/.style={fill=white}]
      \coordinate (r) at (4,0);
      \node (A) {$0$};
      \node (B) at ($(A)+0.5*(r)$) {$R/\pid$};
      \node (C) at ($(B) + (r)$) {$R/\pid^{2}$};
      \node (D) at ($(C) + (r)$) {$R/\pid$};
      \node (E) at ($(D) + 0.5*(r)$) {$0$};
      \draw[->,thick] (A) to (B);
      \draw[->,thick] (B) to node[above]{$(p)$}(C);
      \draw[->,thick] (C) to node[above]{$(1)$}(D);
      \draw[->,thick] (D) to (E);
  \end{tikzpicture}
\end{center}
We define $\mathcal{S}_{\pid,1}$ via the following exact sequence in $R\modl\modl$:
\begin{center}
  \begin{tikzpicture}[mylabel/.style={fill=white}]
      \coordinate (r) at (4,0);
      \node (As) {$0$};
      \node (A) at ($(A)+0.35*(r)$){$\mathcal{S}_{\pid,1}$};
      \node (B) at ($(A)+ 0.75*(r)$) {$(R/\pid \otimes -)$};
      \node (C) at ($(B) + (r)$) {$(R/\pid^{2}  \otimes -)$};
      \node (D) at ($(C) + (r)$) {$(R/\pid \otimes -)$};
      \node (E) at ($(D) + 0.5*(r)$) {$0$};
      \draw[->,thick] (As) to (A);
      \draw[->,thick] (A) to (B);
      \draw[->,thick] (B) to node[above]{$(p \otimes -)$}(C);
      \draw[->,thick] (C) to node[above]{$(1 \otimes -)$}(D);
      \draw[->,thick] (D) to (E);
  \end{tikzpicture}
\end{center}
Then we can compute:
\begin{itemize}
    \item $\mathcal{S}_{\pid,1}(R/\pid) \cong R/\pid$,
    \item $\mathcal{S}_{\pid,1}(R/\pid^m) \cong 0$ for $m \neq 1$,
    \item $\mathcal{S}_{\qid,1}(R/\qid^m) \cong 0$ for $\qid \neq \pid$, $m \geq 1$,
    \item $\mathcal{S}_{\pid,1}(P) \cong 0$, for $P \in \Proj( R\modl )$.
\end{itemize}
In particular, $\mathcal{S}_{\pid,1}$ is a simple object in $R\modl\modl$. We have
\begin{itemize}
    \item $\Hilb( \mathcal{S}_{\pid,1} ) = 2\cdot\mu_{\pid,1} - \mu_{\pid,2}$,
    \item $\defect( \mathcal{S}_{\pid,1} ) \cong 0$,
    \item $\codefect( \mathcal{S}_{\pid,1} ) \cong 0$,
    \item $\rank( \mathcal{S}_{\pid,1} ) = 0$.
\end{itemize}
Thus, \Cref{theorem:main_Dedekind} yields $\V( \mathcal{S}_{\pid,1} ) = \Zg(R) \setminus \{ F_{\pid,1} \}$.
\end{example}

\begin{example}\label{example:S_2}
Let $n \geq 2$.
We have the following short exact sequence of $R$-modules:
\begin{center}
  \begin{tikzpicture}[mylabel/.style={fill=white}]
      \coordinate (r) at (4,0);
      
      \node (A) {$0$};
      \node (B) at ($(A)+0.5*(r)$) {$R/\pid^n$};
      \node (C) at ($(B) + (r)$) {$R/\pid^{n-1} \oplus R/\pid^{n+1}$};
      \node (D) at ($(C) + (r)$) {$R/\pid^n$};
      \node (E) at ($(D) + 0.5*(r)$) {$0$};
      
      \draw[->,thick] (A) to (B);
      \draw[->,thick] (B) to node[above]{$\pmatrow{1}{p}$}(C);
      \draw[->,thick] (C) to node[above]{$\pmatcol{-p}{1}$}(D);
      \draw[->,thick] (D) to (E);
  \end{tikzpicture}
\end{center}
We define $\mathcal{S}_{\pid,n}$ via the following exact sequence in $R\modl\modl$:
\begin{center}
  \begin{tikzpicture}[mylabel/.style={fill=white}]
      \coordinate (r) at (4,0);
      
      \node (As) {$0$};
      \node (A) at ($(As)+0.35*(r)$){$\mathcal{S}_{\pid,n}$};
      \node (B) at ($(A)+0.5*(r)$) {$(R/\pid^n \otimes -)$};
      \node (C) at ($(B) + 1.25*(r)$) {$((R/\pid^{n-1} \oplus R/\pid^{n+1}) \otimes -)$};
      \node (D) at ($(C) + 1.25*(r)$) {$(R/\pid^n \otimes -)$};
      \node (E) at ($(D) + 0.5*(r)$) {$0$};
      
      \draw[->,thick] (As) to (A);
      \draw[->,thick] (A) to (B);
      \draw[->,thick] (B) to node[above, scale = 0.7]{$(\pmatrow{1}{p} \otimes -)$}(C);
      \draw[->,thick] (C) to node[above, scale = 0.7]{$(\pmatcol{-p}{1} \otimes -)$}(D);
      \draw[->,thick] (D) to (E);
  \end{tikzpicture}
\end{center}
Then we can compute:
\begin{itemize}
    \item $\mathcal{S}_{\pid,n}(R/\pid^n) \cong R/\pid$,
    \item $\mathcal{S}_{\pid,n}(R/\pid^m) \cong 0$ for $m \neq n$,
    \item $\mathcal{S}_{\qid,n}(R/\qid^m) \cong 0$ for $\qid \neq \pid$, $m \geq 1$,
    \item $\mathcal{S}_{\pid,n}(P) \cong 0$, for $P \in \Proj( R\modl )$.
\end{itemize}
In particular, $\mathcal{S}_{\pid,n}$ is a simple object in $R\modl\modl$. We have
\begin{itemize}
    \item $\Hilb( \mathcal{S}_{\pid,n} ) = 2\cdot\mu_{\pid,n} - \mu_{\pid,n-1} - \mu_{\pid,n+1}$,
    \item $\defect( \mathcal{S}_{\pid,n} ) \cong 0$,
    \item $\codefect( \mathcal{S}_{\pid,n} ) \cong 0$,
    \item $\rank( \mathcal{S}_{\pid,n} ) = 0$.
\end{itemize}
Thus, \Cref{theorem:main_Dedekind} yields $\V( \mathcal{S}_{\pid,n} ) = \Zg(R) \setminus \{ F_{\pid,n} \}$.
\end{example}

\begin{corollary}\label{corollary:detection_intersection_defect_codefect}
Let $R$ be a Dedekind domain and let $\mathcal{G} \in R\modl\modl$.
The following statements are equivalent:
\begin{enumerate}
    \item $\mathcal{G}$ is a finite extension of simple functors of the form $\mathcal{S}_{\pid,n}$ for $\pid \in \mSpec(R)$, $n \geq 1$,
    \item $\defect( \mathcal{G} ) \cong 0$ and $\codefect( \mathcal{G} ) \cong 0$,
    \item $\HilbP( \mathcal{G} ) = 0$.
\end{enumerate}
\end{corollary}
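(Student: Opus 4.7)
The plan is to establish the implications (1) $\Rightarrow$ (2), (1) $\Rightarrow$ (3), (2) $\Rightarrow$ (1), and (3) $\Rightarrow$ (2).

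For (1) $\Rightarrow$ (2) and (1) $\Rightarrow$ (3), I would induct on the length of a filtration of $\mathcal{G}$ with simple quotients of the form $\mathcal{S}_{\pid,n}$. The exactness of $\defect$ and $\codefect$ together with the identities $\defect(\mathcal{S}_{\pid,n}) \cong 0 \cong \codefect(\mathcal{S}_{\pid,n})$ verified in \Cref{example:S_1} and \Cref{example:S_2} handle (1) $\Rightarrow$ (2). For (1) $\Rightarrow$ (3), the additivity of $\HilbP$ reduces matters to the direct check $\HilbP(\mathcal{S}_{\pid,n}) = 0$: the rank is zero, and $\sum_l l\cdot\mult^{\mathcal{S}_{\pid,n}}_{\pid,l}$ evaluates to $2n - (n-1) - (n+1) = 0$ for $n \geq 2$ and to $2\cdot 1 - 1\cdot 2 = 0$ for $n = 1$.

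For (2) $\Rightarrow$ (1), I would invoke the Ziegler--Serre frame isomorphism of \Cref{para:ziegler_spectrum}. Let $\CC \subseteq R\modl\modl$ denote the Serre subcategory generated by the family $\{\mathcal{S}_{\pid,n}\}_{\pid,n}$. Combining the vanishing-locus computations in \Cref{example:S_1} and \Cref{example:S_2} with the description of closed sets in \Cref{example:zg_dedekind} shows that
\[
\V(\CC) \;=\; \bigcap_{\pid,n} \V(\mathcal{S}_{\pid,n}) \;=\; \{P_{\pid},\, A_{\pid},\, Q \mid \pid \in \mSpec(R)\}
\]
is a closed subset of $\Zg(R)$. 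The Ziegler--Serre bijection therefore yields $\CC = \mathbf{I}(\V(\CC))$, and \Cref{theorem:checking_kernels_of_defects} translates the vanishing conditions $\mathcal{G}(P_{\pid}) \cong 0$ and $\mathcal{G}(A_{\pid}) \cong 0$ into $\defect(\mathcal{G}) \cong 0$ and $\codefect(\mathcal{G}) \cong 0$, respectively; the remaining condition $\mathcal{G}(Q) \cong 0$ is automatic from $\defect(\mathcal{G}) \cong 0$ by \Cref{corollary:defect_codefect_same_rank}.

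The step (3) $\Rightarrow$ (2) is the one I expect to be the main obstacle, since a purely $K_0$-level reading of $\HilbP(\mathcal{G}) = 0$ is too weak: when $\mathrm{Pic}(R)$ has torsion it cannot on its own pin down the $\mathrm{Pic}$-contribution to $\pr(\mathcal{G})$. Instead I would establish $\mathcal{G}(P_{\pid}) \cong 0$ for every $\pid$ directly. From $\HilbP(\mathcal{G}) = 0$ and \Cref{theorem:compute_hilbert_function} one deduces $\Hilb(\mathcal{G})(\pid,n) = 0$ for all sufficiently large $n$, and \Cref{lemma:hilb_detects_all_fpn} then gives $\mathcal{G}(R/\pid^n) \cong 0$ for those $n$. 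Using the classical Pr\"ufer description $P_{\pid} = \varinjlim_n R/\pid^n$ of the indecomposable injective hull of $R/\pid$ (with transition maps given by multiplication by a uniformizer of $R_{\pid}$), together with the fact that finitely presented functors commute with filtered colimits on $R\Modl$ (see \Cref{remark:ev_at_RMod}), the colimit over the eventually-zero system $\mathcal{G}(R/\pid^n)$ vanishes, giving $\mathcal{G}(P_{\pid}) \cong 0$. Applying \Cref{theorem:checking_kernels_of_defects} yields $R_{\pid} \otimes \defect(\mathcal{G}) \cong 0$ for every $\pid$, so the support of the finitely presented module $\defect(\mathcal{G})$ is empty and therefore $\defect(\mathcal{G}) \cong 0$. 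For the covariant defect I would run the same argument on $\DAGJ(\mathcal{G})$: applying AGJ-duality to a copresentation of $\mathcal{G}$ yields a presentation of $\DAGJ(\mathcal{G})$ with $\lmc(\DAGJ(\mathcal{G})) = \rmc(\mathcal{G})$, and since the torsion multiplicities $\mult_{\pid,l}$ are shared by $\lmc$ and $\rmc$ (as observed in the paragraph preceding \Cref{definition:pr_mult_for_fpfunctors}) while the rank is preserved by \Cref{lemma:rank_lcm_rcm}, we obtain $\HilbP(\DAGJ(\mathcal{G})) = \HilbP(\mathcal{G}) = 0$. The previous Pr\"ufer-colimit argument applied to $\DAGJ(\mathcal{G})$ then forces $\defect(\DAGJ(\mathcal{G})) \cong 0$, which equals $\codefect(\mathcal{G})$ by \Cref{lemma:defect_codefect_duality}.
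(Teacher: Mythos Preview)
Your proof is correct. The implications $(1)\Rightarrow(2)$, $(1)\Rightarrow(3)$, and $(2)\Rightarrow(1)$ are handled essentially as in the paper (via exactness of the defects, additivity of $\HilbP$, and the Ziegler--Serre bijection $\CC=\mathbf{I}(\V(\CC))$ applied to the Serre subcategory generated by the $\mathcal{S}_{\pid,n}$). The genuine divergence is in closing the cycle from $(3)$. The paper proves $(3)\Rightarrow(1)$ directly: from $\HilbP(\mathcal{G})=0$ one gets $\rank(\mathcal{G})=0$, so by \Cref{theorem:compute_hilbert_function} only finitely many pairs $(\pid,n)$ satisfy $\mathcal{G}(R/\pid^n)\not\cong 0$; then one filters $\mathcal{G}$ by $\pid\mathcal{G}\subsetneq\mathcal{G}$ and argues inductively that each layer is built from the $\mathcal{S}_{\pid,n}$. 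Your route instead proves $(3)\Rightarrow(2)$ via the Pr\"ufer description $P_{\pid}\cong\varinjlim_n R/\pid^n$ and commutation of finitely presented functors with filtered colimits, then transfers to $\codefect$ by AGJ duality using $\lmc(\DAGJ(\mathcal{G}))=\rmc(\mathcal{G})$. Your approach sidesteps the filtration step (which the paper leaves rather terse) and is arguably cleaner, at the cost of importing the colimit-commutation fact, which is standard but is \emph{not} actually the content of \Cref{remark:ev_at_RMod}; you should cite it separately. The paper's approach stays entirely within the invariants already set up and yields the explicit filtration witnessing $(1)$ as a byproduct.
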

\begin{proof}
Being a finite extension of simple functors of the described form means that $\mathcal{G}$ lies in the Serre subcategory spanned by those simple functors: 
\[
\mathbf{I} := \langle \mathcal{S}_{\pid,n} \mid \pid \in \mSpec(R), n \geq 1 \rangle \in \Serre( R\modl\modl ).
\]
We set
\[
X := \{ A_{\pid}, P_{\pid}, Q \mid \pid \in \mSpec(R) \} \subseteq \Zg(R).
\]
We claim that the following equality holds:
\[
\mathbf{I}(X) = \mathbf{I}.
\]
Since $X$ is a closed set, this equality is equivalent to
\[
X = \V( \mathbf{I} ),
\]
which is true by \Cref{example:S_1} and \Cref{example:S_2}:
\[
\V( \mathbf{I} ) = \bigcap_{\pid,n} \V( \mathcal{S}_{\pid,n} ) = \bigcap_{\pid,n} \Zg(R) \setminus \{ F_{\pid,n} \} = \Zg(R) \setminus ( \bigcup_{\pid,n}\{F_{\pid,n}\} ) = X.
\]
Thus, $\mathcal{G} \in \mathbf{I}$ if and only if $\mathcal{G}$ vanishes on all points in $X$. Now, the equivalence of statement $1$ and $2$ follows from \Cref{theorem:main_Dedekind}.

Now, if $\mathcal{G} \in \mathbf{I}$, then its Hilbert function is zero except for finitely many pairs of primes and integers. Thus, its Hilbert polynomial vanishes.

Conversely, assume that $\HilbP( \mathcal{G} ) = 0$. Then $\rank( \mathcal{G} ) = 0$. It follows from \Cref{theorem:compute_hilbert_function} that
\[
\Hilb( \mathcal{G} ) =  \sum_{\substack{\pid \in \mSpec(R) \\ l \geq 1} }\mult_{\pid,l}^{ \mathcal{G} } \cdot \mu_{\pid, l}.
\]
Note that in the sum on the right hand side, only finitely many primes are involved.
It follows that $\mathcal{G}( R/\pid^n ) \not\cong 0$ for only finitely many pairs of primes $\pid$ and integers $n \geq 1$. Let $\pid$ be one such prime. Then $\pid \mathcal{G}$ is a proper subfunctor of $\mathcal{G}$, and $\mathcal{G}/\pid \mathcal{G}$ is a direct sum of simple functors of the form $\mathcal{S}_{\pid,n}$. Inductively, we get that $\mathcal{G} \in \mathbf{I}$.
\end{proof}

\begin{corollary}\label{corollary:AplusP_detection}
Let $R$ be a DVR. Then the Hilbert polynomial detects being zero at $P \oplus A$, where we use the notation of \Cref{example:topology_localpid}.
\end{corollary}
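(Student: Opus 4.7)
The plan is to combine \Cref{corollary:detection_intersection_defect_codefect} with \Cref{theorem:checking_kernels_of_defects}, exploiting the fact that a DVR is already local at its unique maximal ideal. Concretely, by additivity of evaluation, $\mathcal{G}(P\oplus A)\cong 0$ if and only if $\mathcal{G}(P)\cong 0$ and $\mathcal{G}(A)\cong 0$, so it suffices to show that the simultaneous vanishing at $P$ and $A$ is equivalent to $\HilbP(\mathcal{G})=0$.

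First I would translate the vanishing at each summand into a statement about the defects. Let $\pid$ be the unique maximal ideal of the DVR $R$, so that $R_{\pid}=R$. Then \Cref{theorem:checking_kernels_of_defects} (applied with $R$ commutative noetherian, so that $P=E(R/\pid)$ and $A=D(E(R/\pid))$ play the roles described in \Cref{lemma:Matlis_inj} and \Cref{lemma:Matlis_flat}) gives
\[
\mathcal{G}(P)\cong 0 \ \Longleftrightarrow\ R_{\pid}\otimes_R\defect(\mathcal{G})\cong 0 \ \Longleftrightarrow\ \defect(\mathcal{G})\cong 0,
\]
and analogously $\mathcal{G}(A)\cong 0$ if and only if $\codefect(\mathcal{G})\cong 0$.

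Next I would invoke \Cref{corollary:detection_intersection_defect_codefect} (a DVR is a Dedekind domain), which states that $\defect(\mathcal{G})\cong 0$ and $\codefect(\mathcal{G})\cong 0$ together are equivalent to $\HilbP(\mathcal{G})=0$. Chaining the equivalences yields
\[
\mathcal{G}(P\oplus A)\cong 0 \ \Longleftrightarrow\ \defect(\mathcal{G})\cong 0 \text{ and } \codefect(\mathcal{G})\cong 0 \ \Longleftrightarrow\ \HilbP(\mathcal{G})=0,
\]
which is exactly the claim that the additive invariant $\HilbP$ detects being zero at $P\oplus A$. There is no real obstacle; the entire argument is an assembly of results already established, and the role of the DVR hypothesis is simply to collapse the localization $R\to R_{\pid}$ into the identity, so that the two defect conditions appearing in \Cref{theorem:main_Dedekind} become unqualified vanishing statements matching those of \Cref{corollary:detection_intersection_defect_codefect}.
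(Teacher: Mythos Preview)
Your proof is correct and follows the same overall strategy as the paper: reduce to \Cref{corollary:detection_intersection_defect_codefect} by showing that $\mathcal{G}(P\oplus A)\cong 0$ is equivalent to the simultaneous vanishing of $\defect(\mathcal{G})$ and $\codefect(\mathcal{G})$.

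The only notable difference is in how that intermediate equivalence is obtained. You route through \Cref{theorem:checking_kernels_of_defects}, which forces you to identify $A$ with $D(E(R/\pid))$; this identification is true but is not proved anywhere in the paper (it is used implicitly in the proof of \Cref{theorem:main_Dedekind}), so strictly speaking you are asserting something that still needs an argument (e.g., that among the two flat points $D(E(R/0))$ and $D(E(R/\pid))$ of \Cref{lemma:Matlis_flat}, the former is $Q$ and the latter is $A$). The paper instead argues more directly: for a DVR, $P=E(R/\pid)$ is an injective cogenerator and $A=\widehat{R}$ is faithfully flat, whence Paragraphs~\ref{para:defect_equiv} and~\ref{para:codefect_equiv} give $\kernel(\ev{P})=\kernel(\defect)$ and $\kernel(\ev{A})=\kernel(\codefect)$, so $\kernel(\ev{P\oplus A})=\kernel(\defect)\cap\kernel(\codefect)$. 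This bypasses elementary duality entirely and leaves no loose end. Your use of the DVR hypothesis to collapse $R_\pid$ to $R$ is exactly right in either version.
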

\begin{proof}
In the case where $R$ is a DVR, the module $P$ is an injective cogenerator for $R\Modl$ and $A$ is a faithfully flat $R$-module. Thus, 
\[\kernel( \ev{P \oplus A} ) = \kernel( \defect ) \cap \kernel( \codefect ).\]
Now, the claim follows from \Cref{corollary:detection_intersection_defect_codefect}.
\end{proof}

\bibliographystyle{alpha}
\bibliography{biblio}

\end{document}